        \pgfplotsset{compat=1.6}
  \newcommand{\calR}{{\mathcal R}}
\newcommand{\bfz}{{\bf z}}
  \newcommand{\Ab}{\operatorname{Ab}}
\newcommand{\CC}{\mathbb{C}}
\newcommand\PP{{\mathbb P}}
\newcommand\calE{{\mathcal E}}
\newcommand\ZZ{{\mathbb Z}}
\newcommand\NN{{\mathbb N}}
\newcommand\calF{{\mathcal F}}
\newcommand\R{{\mathbb R}}
\newcommand{\MS}{{\rm MS}^{k}}
\DeclareMathOperator\Res{Res}
\DeclareDocumentCommand{\rec}{ O{a} O{n}}{\left[ #1 \right]^{#2}}
 \newcommand{\komoduli}[1][g]{{\Omega^{k}\mathcal M}_{#1}}
 \newcommand{\omoduli}[1][g]{{\Omega\mathcal M}_{#1}}
 \newcommand{\quadomoduli}[1][g]{{\Omega^{2}\mathcal M}_{#1}}
\newtheorem{thm}{Theorem}[section]
\newtheorem{cor}[thm]{Corollary}
\newtheorem{prop}[thm]{Proposition}
\newtheorem{lem}[thm]{Lemma}
\theoremstyle{definition}
\newtheorem{defn}[thm]{Definition}
\theoremstyle{remark}
\theoremstyle{definition}
\theoremstyle{definition}
\newtheorem{ex}[thm]{Example}
\theoremstyle{definition}
\numberwithin{equation}{section}
\title{Finite isoresidual covers in strata of $k$-differentials}
\author{Dawei Chen} 
\address[Dawei Chen]{Department of Mathematics, Boston College, Chestnut Hill, MA 02467, USA}
\email{dawei.chen@bc.edu}
\author{Quentin Gendron}
\address[Quentin Gendron]{Instituto de Matem\'{a}ticas de la UNAM,
Ciudad Universitaria, Coyoacán, 04510, Tenochtitlán,
M\'{e}xico}
\email{gendron@matem.unam.mx}
\author{Miguel Prado} 
\address[Miguel Prado]{Goethe University, Frankfurt am Main, Hessen 60325, Germany}
\email{prado@math.uni-frankfurt.de}
\author{Guillaume Tahar}
\address[Guillaume Tahar]{Beijing Institute of Mathematical Sciences and Applications, Huairou District, Beijing, China}
\email{guillaume.tahar@bimsa.cn}
\date{\today}
\keywords{$k$-differentials, isoresidual fibration, resonance stratification, multi-scale compactification, cone spherical metric}
\begin{document}
\begin{abstract}
Consider the strata of primitive $k$-differentials on the Riemann sphere whose singularities, except for two, are poles of order divisible by $k$. The map that assigns to each $k$-differential the $k$-residues at these poles is a ramified cover of its image. Generalizing results known in the case of abelian differentials, we describe the ramification locus of this cover and provide a formula, involving the $k$-factorial function, for the cardinality of each fiber. We prove this formula using intersection calculations on the multi-scale compactification of the strata of $k$-differentials. In special cases, we also give alternative proofs using flat geometry. Finally, we present an application to cone spherical metrics with dihedral monodromy.
\end{abstract}
\maketitle
\setcounter{tocdepth}{1}
\tableofcontents

\section{Introduction}

A $k$-differential~$\zeta$ on a Riemann surface $X$ of genus $g$ is a section of $K^{\otimes k}$, where $K$ is the canonical bundle of~$X$. Thus, $k$-differentials encode intrinsic properties of $X$. Moreover, the integration of a $k$-th root of $\zeta\neq0$ induces a flat metric with conical singularities at the zeros and poles, where the transition maps (away from the singularities) are given by translations and rotations by angles that are multiples of $2\pi /k$. 
\par
The moduli space of $k$-differentials can be stratified according to the orders $m_{i}$ of the $n$ zeros and the $p$ poles of $\zeta$.  These spaces, called strata and denoted by $\komoduli(m_{1},\dots,m_{n+p})$, are complex orbifolds of dimension $2g-2+n+p$, as proved in \cite{BCGGM3}. In this work, we mark the zeros and poles: the strata parameterize tuples $(X,\zeta, z_{1},\dots,z_{n};p_{1},\dots,p_{p})$, where $z_{i}$ is a singularity of order $m_{i}>-k$ and $p_{i}$ of order $m_{n+i}$ of the $k$-differential~$\zeta$. These strata play a significant role in  understanding surface dynamics, as illustrated in \cite{ChenSurvey,AMTransSurf} for $k=1,2$ and in \cite{AAHR} for higher~$k$.
\par
From the perspective of enumerative geometry, there have been a number of fascinating recent results concerning $k$-differentials and their  applications. These include computing volumes and Euler characteristics of linear submanifolds in moduli spaces of flat surfaces, using cycle classes of strata of differentials to study double ramification cycles, and developing analogous structures, called $k$-leaky numbers, that arise in the Hurwitz counting problem of branched covers. We refer to \cite{SauvagetVolumes, NguyenVolumes, CMSEuler, SchmittKdiff, BHPSS, CMSLeaky} for these developments.
\smallskip
\par
As we recall in Section~\ref{sec:locmodel}, if a singularity of $\zeta$ is a flat-geometric zero (i.e., of order $>-k$), or if it is a pole whose order is not divisble by $k$, then its $k$-residue is always zero. On the other hand, a pole of $\zeta$ of order divisible by $k$ may have a non-trivial $k$-residue.
\par
A dimension count shows that for $k \geq 2$, the only strata whose dimension equals that of their space of configurations of $k$-residues are of the form $\Omega^{k}\mathcal{M}_{0}(a_{1},a_{2},-b_{1},\dots,-b_{p})$, where $b_{1},\dots,b_{p}\in k \mathbb{N}_{>0}$ and $a_{1},a_{2}\in\ZZ$ are coprime to $k$. In this case, we define the {\em isoresidual fibration} as
\begin{equation*}
 \Res\colon\komoduli[0](a_1, a_2, -b_1, \ldots, -b_p) \to \mathcal{R}^{k}_{p} : \zeta \mapsto (\Res_{p_{1}}\zeta,\dots,\Res_{p_{p}}\zeta)\,,
\end{equation*}
where the target space $\mathcal{R}^{k}_{p}$ is the complex vector space $\mathbb C^p$ that encodes the configurations of the $k$-residues at the poles $p_{1},\dots,p_{p}$ of order $-b_1,\ldots, -b_p$.
\par
Since we label each singularity, we assume throughout the paper that $a_1 > -k$. For $a_2$, there are two cases: either $a_2 > -k$, corresponding to a flat-geometric zero, or $a_2 < -k$, corresponding to a flat-geometric pole whose order is not divisible by $k$; in both cases, the $k$-residue is zero. 
\par 
In these cases, when $p\geq2$, the isoresidual fibration is a ramified cover of its image, which we call the {\em $k$-isoresidual cover}. The main goal of this paper is to compute the number of preimages above any configuration of residues. In particular, we determine the degree and the ramification locus of this cover. Remarkably, the degree formula involves the $k$-factorial function, defined by $$a!_{(k)}\coloneqq\prod\limits_{i=0}^{\lceil a/k \rceil}(a-ik)\,.$$ 
More generally, we define the following function.
\begin{defn}
Let $k\geq2$ and $a,m\in\NN_{>0}$. The {\em $(k,m)$-partial product of $a$} is defined as  
\begin{equation}\label{eq:fk}
 f_{k}(a,m)=\left \{
   \begin{array}{l r}
     \frac{1}{a+k}   & \text{ if } m=1\,,\\
      1   & \text{ if } m=2\,, \\
      \prod\limits_{0 \leq j \leq m-3} (a-kj)  & \text{ if } m\geq3\,.
   \end{array}
   \right .
\end{equation}
\end{defn}
\par 
Note that when $m\leq\left[\frac{a}{k}\right]+3$, the value of $f_{k}(a,m)$ is positive, whereas for $m>\left[\frac{a}{k}\right]+3$, the value of $f_{k}(a,m)$ alternates in sign as $m$ increases. 
Using this function, we can determine the degree of the $k$-isoresidual cover as follows.
\begin{thm}\label{thm:main}
Let $k\geq2$ and $\mu=(a_1,a_2,-b_1,\ldots,-b_p)$ be a partition of $-2k$ such that $b_{1},\dots,b_{p}$ are positive integer multiples of $k$, while $a_{1}$ and $a_{2}$ are coprime to $k$.
The isoresidual map from the stratum  $\Omega^{k}\mathcal{M}_{0}(\mu)$ of (marked) $k$-differentials on $\mathbb{CP}^{1}$ is a ramified cover of~$\mathbb{C}^{p}$ of degree
 \begin{equation}\label{eq:degre1}
  d_{k}(\mu)\coloneqq\sum_{c_{1,I}>0} c_{1,I} \cdot f_{k}(a_{1},|I|+1)\cdot f_{k}(a_{2},|I^c|+1)\,,
 \end{equation}
where for any $I\subset \{1,\ldots,p\}$
\begin{equation}\label{eq:c1I}
 c_{1,I}=a_1-\sum\limits_{i\in I}b_i+k \,.
\end{equation}

\end{thm}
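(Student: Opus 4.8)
The plan is to express $d_{k}(\mu)$ as an intersection number on the multi-scale compactification $\overline{\mathbb{P}\Omega^{k}\mathcal M_{0}(\mu)}$ of the projectivized stratum and to evaluate it recursively using the boundary. Since each $k$-residue $\Res_{p_{i}}\zeta$ is linear in $\zeta$, the isoresidual map is $\mathbb C^{*}$-equivariant and descends to a rational map $\mathbb{P}\Res$ from $\mathbb{P}\Omega^{k}\mathcal M_{0}(\mu)$ to $\mathbb{P}^{p-1}$, with hyperplane class $H$, of the same degree $d_{k}(\mu)$; it is the map attached to the linear subsystem of $|\mathcal O(1)|$ spanned by $\Res_{p_{1}}\zeta,\dots,\Res_{p_{p}}\zeta$, where $\mathcal O(-1)$ is the tautological bundle with fibre $\mathbb C\cdot\zeta$ and $\xi=c_{1}(\mathcal O(1))$. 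The base locus of this residue system lies in the boundary — it is where all the poles have been pushed to lower levels, so that every residue vanishes — so after resolving the indeterminacy one has $(\mathbb{P}\Res)^{*}H=\xi-E$ for an effective, boundary-supported class $E$, and $d_{k}(\mu)=\int_{\overline{\mathbb{P}\Omega^{k}\mathcal M_{0}(\mu)}}(\xi-E)^{p-1}$. Thus the whole computation localizes on the boundary divisors of the compactification.

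The second step is to organize those boundary divisors. In genus $0$ the dual graph of a degenerate curve is a tree, so the relevant (vertical) codimension-one boundary strata are two-level configurations: $\mathbb{CP}^{1}$ splits into a component carrying $z_{1}$ together with the poles $\{p_{i}:i\in I\}$ and a component carrying $z_{2}$ together with the remaining poles, joined at a single node $q$. Imposing that the orders of the $k$-differential on the two components sum to $-2k$ on the whole curve forces the order at $q$ to be $-(k+c_{1,I})$ on the $z_{1}$-side and $c_{1,I}-k$ on the $z_{2}$-side, with $c_{1,I}$ as in \eqref{eq:c1I}, and one checks that compatibility with the level structure holds exactly when $c_{1,I}>0$ — matching the range of summation in \eqref{eq:degre1}. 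Because $c_{1,I}\equiv a_{1}\pmod k$ is prime to $k$, the node carries no residue; its prong number is $c_{1,I}/\gcd(c_{1,I},k)=c_{1,I}$. Treating $q$ as an extra marked singularity, the $z_{1}$-component then lies in an isoresidual-type stratum with distinguished order $a_{1}$ and $|I|$ residue-bearing poles, and the $z_{2}$-component in one with distinguished order $a_{2}$ and $|I^{c}|$ residue-bearing poles.

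The third step is to carry out the boundary evaluation by induction on $p$. Expanding $(\xi-E)^{p-1}$ and computing the contribution of the divisor $D_{I}$ — using the normal bundle of $D_{I}$, the weights of the level-rescaling torus, the prong-matching count at $q$, and the global $k$-residue condition that pins the residue of the top differential at $q$ to the residue data on the bottom — one obtains exactly the summand $c_{1,I}\cdot(\text{sub-count on the }z_{1}\text{-side})\cdot(\text{sub-count on the }z_{2}\text{-side})$, with the factor $c_{1,I}$ produced by the node. The core of the induction is to show that each sub-count depends only on the distinguished order and on the number of residue-bearing poles involved, hence equals $f_{k}(a_{1},|I|+1)$ resp.\ $f_{k}(a_{2},|I^{c}|+1)$; this reassembles \eqref{eq:degre1}. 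The three-case shape of $f_{k}$ is fixed by the base cases of the induction: $f_{k}(a,2)=1$ is the degree of the one-pole isoresidual map, $f_{k}(a,1)=\tfrac1{a+k}$ is the orbifold mass of the two-point stratum $\Omega^{k}\mathcal M_{0}(a,-(2k+a))$, whose only differential $z^{a}(dz)^{k}$ on $(\mathbb{CP}^{1};0,\infty)$ has automorphism group $\mathbb Z/(a+k)$, and the product $\prod_{0\le j\le m-3}(a-kj)$ for $m\ge3$ (with its sign change once $m>\lceil a/k\rceil+3$) comes out of unfolding the recursion.

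The main obstacle will be the precise bookkeeping of the boundary multiplicities for $k$-differentials. The global $k$-residue conditions attaching the $k$-residue of the top differential at a node to the residue data of the lower component are considerably more delicate than their abelian analogues and depend on which components carry residue-bearing poles; they must be combined correctly with the prong-matching multiplicities and the level-rescaling weights in order to isolate the clean factor $c_{1,I}$ and to rule out spurious boundary contributions. A second sensitive point is the uniform treatment of the two cases for $a_{2}$: when $a_{2}<-k$ the divisor with all residue-bearing poles on the $z_{1}$-side becomes admissible, which must be analyzed on its own terms, and one still has to verify the closing combinatorial identity — that unfolding the two-component recursion collapses to the single sum \eqref{eq:degre1} with its $k$-factorial weights — in that range as well. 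For small $p$ the resulting count can be cross-checked by building the $k$-differentials with prescribed residues directly by flat-geometric surgery, which we would use to confirm the general formula.
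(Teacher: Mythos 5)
Your overall strategy --- realizing the degree as an intersection number on the multi-scale compactification and evaluating it by induction on $p$ via boundary degenerations --- is the same as the paper's, and your identification of the admissible two-level graphs (the condition $c_{1,I}>0$, the node orders $-(k+c_{1,I})$ and $c_{1,I}-k$, the prong $c_{1,I}$) is correct. But the central claim of your third step does not hold as stated, and it is exactly where the real work lies. The contribution of a two-level divisor $D_I$ is $c_{1,I}$ times the product of the top self-intersections of $\xi$ on the two component strata (a Segre-class computation). On the $z_{1}$-side this is indeed $f_{k}(a_{1},|I|+1)$, because that component is a minimal stratum with a single singularity of order $a_{1}$ not divisible by $k$ and $|I|+1$ poles. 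On the $z_{2}$-side, however, the node is a \emph{zero} of order $c_{1,I}-k$, so the component stratum is $\Omega^{k}\mathcal M_{0}(a_{2},\,c_{1,I}-k,\,\{-b_i\}_{i\in I^{c}})$, which has two singularities of order coprime to $k$; its top $\xi$-power is the full recursive degree $d_{k}$ of that stratum --- by the very theorem you are proving, a sum over subsets of $I^{c}$ depending on the individual $b_{i}$ --- and not the single value $f_{k}(a_{2},|I^{c}|+1)$. So your assertion that ``each sub-count depends only on the distinguished order and on the number of residue-bearing poles'' fails on that side, and the term-by-term matching of boundary divisors with the summands of \eqref{eq:degre1} breaks down. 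Moreover, $\int\xi^{p-1}$ is not simply the sum of the contributions of the two-component divisors: the paper evaluates it with the relation $\xi=-(a_{1}+k)\psi_{z_{1}}+\sum_{\Gamma}t_{\Gamma}\delta_{\Gamma}$, whose $\psi$-term must be iterated, producing graphs with several semistable top components weighted by powers of $-(a_{1}+k)$; collapsing the resulting nested sums into \eqref{eq:degre1} requires two nontrivial identities for $f_{k}$ (Propositions~\ref{prop:fk-split} and~\ref{prop:zero-sum}) that your proposal neither states nor supplies. This resummation is the technical heart of the proof and is missing.

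Two smaller points. First, the paper does not need your correction class $E$: it shows that for a general residue tuple the intersection $D_{1}\cap\cdots\cap D_{p-1}$ of the residue divisors is transversal and avoids the boundary entirely (a boundary point would force a component of abelian type and hence a resonance relation $P(\{R_i\}_{i\in I})=0$), so the degree is exactly $\int\xi^{p-1}$. Your route through $(\xi-E)^{p-1}$ would require identifying $E$ precisely, and your description of the base locus is inaccurate in any case, since a pole of order exactly $-k$ always carries a nonzero $k$-residue. Second, because $c_{1,I}$ is coprime to $k$, the node of $D_I$ carries no $k$-residue and the component containing $z_{2}$ is not of abelian type, so there is no global $k$-residue condition ``pinning'' anything at $q$, contrary to what your third step invokes.
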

The above formula takes a particularly simple form when all the poles have order $-k$.
\begin{cor}\label{cor:polesk}
 For the strata $\Omega^{k}\mathcal{M}_{0}(a_{1},a_{2},\rec[-k][p])$, the degree of the isoresidual cover is given by 
 \begin{equation*}
  \binom{p-1}{\lceil a_{1}/k \rceil} \cdot
a_{1}!_{(k)} \cdot a_{2}!_{(k)}\,.
 \end{equation*}
\end{cor}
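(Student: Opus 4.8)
The plan is to derive Corollary~\ref{cor:polesk} directly from Theorem~\ref{thm:main} by specializing to the case $b_1=\cdots=b_p=k$ and then collapsing the resulting sum.

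First I would rewrite the degree formula \eqref{eq:degre1}. When every $b_i=k$, the number $c_{1,I}$ of \eqref{eq:c1I} depends only on the cardinality $j:=|I|$ — namely $c_{1,j}=a_1+k-jk$ — and since $\gcd(a_1,k)=1$ the inequality $c_{1,I}>0$ is equivalent to $j\le r$, where $r:=\lceil a_1/k\rceil$. Collecting the $\binom{p}{j}$ subsets of each size gives
\begin{equation*}
 d_k(\mu)=\sum_{j=0}^{\min(r,p)}\binom{p}{j}\,c_{1,j}\,f_k(a_1,j+1)\,f_k(a_2,p+1-j)\,.
\end{equation*}
I would also record the relation $a_1+a_2=(p-2)k$, which holds because $\mu$ is a partition of $-2k$.

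Next, write $P(a,j):=\prod_{i=0}^{j-1}(a-ik)$ for $j\ge 0$ (so $P(a,0)=1$), together with the convention $P(a,-1):=1/(a+k)$. Inspecting the three cases of \eqref{eq:fk} one checks that $c_{1,j}\,f_k(a_1,j+1)=P(a_1,j)$ — the factor $c_{1,j}=a_1-(j-1)k$ being exactly the term that extends the product in $f_k(a_1,j+1)$ by one step — and that $f_k(a_2,p+1-j)=P(a_2,p-1-j)$ for every $0\le j\le p$. The one step that takes an idea rather than a computation is the following: using the relation $a_2-ik=(p-2-i)k-a_1$ one rewrites $P(a_2,p-1-j)=(-1)^{p-1-j}\prod_{i=j}^{p-2}(a_1-ik)$, and since the index ranges $\{0,\dots,j-1\}$ and $\{j,\dots,p-2\}$ are disjoint and consecutive,
\begin{equation*}
 P(a_1,j)\,P(a_2,p-1-j)=(-1)^{p-1-j}\prod_{i=0}^{p-2}(a_1-ik)\,,
\end{equation*}
which is independent of $j$ apart from its sign.

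Granting this, the factor $\prod_{i=0}^{p-2}(a_1-ik)$ pulls out of the whole sum and one is left with $(-1)^{p-1}\sum_{j=0}^{\min(r,p)}(-1)^j\binom{p}{j}$; the elementary partial-sum identity $\sum_{j=0}^{m}(-1)^j\binom{p}{j}=(-1)^m\binom{p-1}{m}$ (valid for all $m\ge 0$) then gives $d_k(\mu)=(-1)^{p-1-r}\binom{p-1}{r}\prod_{i=0}^{p-2}(a_1-ik)$. To finish I would split this product at $i=r$: the head $\prod_{i=0}^{r-1}(a_1-ik)$ is $a_1!_{(k)}$, while reexpressing the tail $\prod_{i=r}^{p-2}(a_1-ik)$ in the $a_2$-variable (once more via $a_1+a_2=(p-2)k$) turns it into $(-1)^{p-1-r}a_2!_{(k)}$, so that the two signs cancel and $d_k(\mu)=\binom{p-1}{r}\,a_1!_{(k)}\,a_2!_{(k)}$, as claimed. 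The only technical point to watch is the term $j=p$, which is present precisely when $r\ge p$, i.e.\ when $a_2<-k$ is a flat-geometric pole: there $f_k(a_2,1)=1/(a_2+k)$ is not a genuine product, but the convention $P(a_2,-1):=1/(a_2+k)$ keeps the displayed identity valid, and as $\binom{p-1}{r}=0$ in that range both sides of the asserted formula vanish.
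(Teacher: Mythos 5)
Your proposal is correct and follows essentially the same route as the paper's proof: group the subsets $I$ by cardinality, absorb the prefactor $c_{1,j}$ into the product (the paper phrases this as $(a_1-(j-1)k)f_k(a_1,j+1)=f_k(a_1,j+2)$), use $a_1+a_2=(p-2)k$ to convert the $a_2$-products into signed $a_1$-products so that a single product factors out, and finish with the partial alternating-sum identity $\sum_{j=0}^{m}(-1)^j\binom{p}{j}=(-1)^m\binom{p-1}{m}$ (the paper's ``telescoping sum''). Your write-up is in fact slightly more careful than the paper's at the boundary term $j=p$, which you handle via the convention $P(a_2,-1)=1/(a_2+k)$ and which the paper's computation treats implicitly in the same way.
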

\smallskip
\par
Now, to describe the ramification locus of the isoresidual fibration, we introduce the following polynomial expressions in the $k$-residues. Let $[R_1:\cdots:R_p]$ be a tuple of $k$-residues, defined up to simultaneous scaling by $\CC^{\ast}$. For any subset $I = \{i_1,\ldots, i_d\}$ of $\{1,\ldots, p\}$, we define the polynomial
\begin{equation}\label{eq:polyanul}
    P(R_{i_1},\ldots,R_{i_d})\coloneqq \prod_{\left\{(r_{i_1},\ldots,r_{i_d})\colon 
    r_{i_j}^k = R_{i_j}\right\}}(r_{i_1}+\cdots+r_{i_d})\,,
\end{equation}
where the product
runs over all tuples of $k$-th roots of the $k$-residues. This is a homogeneous polynomial of degree $k^{d-1}$ in the ring $\mathbb{Z}[R_1,\ldots,R_p]$ (see Lemma~\ref{lem:PSymmetric}).
\par
The subset of indices $I$ for which the above polynomial vanishes is called a \textit{resonant subset}, and the locus in the space of $k$-residue tuples for which a resonant subset exists is called the {\em resonant locus}. Note that the resonant locus is a union of hypersurfaces, each of which generically corresponds to exactly one resonant subset.
\par
One difference compared to the case of abelian differentials is that there is no residue theorem for $k$-differentials when  $k\geq2$. Hence, a subset $I$ being resonant does not imply that $I^c$ is resonant; in fact, we allow $I=\{1,\ldots,p\}$. When a resonant equation occurs, some sum of $k$-th roots of the residues equals zero. Nevertheless, there may be more than one such sum that vanishes. This motivates the following definition.
\begin{defn}\label{def:abeliannb}
Given a $k$-residue tuple $\calR = [R_1:\cdots:R_p]$ and a subset of indices $I$, the {\em abelian number} of $I$ with respect to $\calR$ is defined as 
\begin{equation}\label{eq:abeliannb}
    \Ab_{\mathcal{R}}(I)  =  \#\left\{(r_i)_{i\in I} \ \Bigg| \  \sum_{i\in I}r_i=0 \text{ and }  \forall i\in I, \  r_i^k=R_i  \right\}\Bigg/\mathbb{C}^*\, .
\end{equation}
\end{defn}
\par
With these preparations, we can determine the number of $k$-differentials whose residues satisfy a unique resonant equation.
\begin{thm}\label{thm:one-resonant}
Let $k\geq2$ and $\mu=(a_1,a_2,-b_1,\ldots,-b_p)$ be a partition of $-2k$ such that $b_{1},\dots,b_{p}$ are positive integer multiples of $k$ and $a_{1}$ and $a_{2}$ are coprime to~$k$. Let  $I\subseteq\{1,\ldots,p\}$ be a subset of $\{1,\dots,p\}$ and denote $B_I= \sum_{i\in I}b_{i}$.  The cardinality of the isoresidual fiber over a residue tuple $\mathcal{R}=[R_1:\cdots:R_p]$ which satisfies exactly the resonance equation defined by ~$I$ is the piecewise polynomial of degree $p-1$ given by 
    \begin{equation*}
     \left\{
   \begin{array}{l r}
    d_{k}(\mu)- f_{I}\cdot \Ab_{\mathcal{R}}(I)  & \text{ if } I = \{1,\ldots,p\}\,,\\
       d_{k}(\mu)- \left[c_{1,I_{>0}}d_k(\mu_{I,\emptyset})+c_{2,I_{>0}}d_k(\mu_{\emptyset, I})\right]f_{I} \cdot \Ab_{\mathcal{R}}(I)   & \text{ if } I\subsetneq \{1,\ldots,p\}\,,
   \end{array}
\right.
    \end{equation*}
where $f_I=f_k(B_I-k,|I|+1)$, $\mu_{\emptyset,I}=(a_1,c_{2,I}-k,\{-b_i\}_{i\in I^c})$, $\mu_{I,\emptyset}=(c_{1,I}-k,a_2,\{-b_i\}_{i\in I^c})$, and  $c_{i,I_{>0}} = \max\{0,c_{i,I}\}$.
\end{thm}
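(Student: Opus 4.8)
The plan is to run the argument on the multi-scale compactification, exactly as for Theorem~\ref{thm:main}. The isoresidual map intertwines the $\CC^{\ast}$-scaling actions on $k$-differentials and on the residue space $\CC^{p}$, hence descends to $\PP\Res\colon\PP\Omega^{k}\mathcal{M}_{0}(\mu)\to\PP^{p-1}$, and after reading the $k$-residues on each level in the multi-scale normalization it extends to a morphism $\overline{\PP\Res}\colon\overline{\PP\Omega^{k}\mathcal{M}_{0}(\mu)}\to\PP^{p-1}$ --- the very extension already used in the proof of Theorem~\ref{thm:main}. Isoresidual fibers are finite (by Theorem~\ref{thm:main} in the interior, and by a dimension count on each boundary stratum), so $\overline{\PP\Res}$ is a finite morphism of degree $d_{k}(\mu)$ between projective varieties of dimension $p-1$; since $\PP^{p-1}$ is smooth, every fiber has length exactly $d_{k}(\mu)$, and therefore for every $\mathcal{R}$
\begin{equation*}
 \#\,\overline{\PP\Res}^{-1}(\mathcal{R})=d_{k}(\mu)-\sum_{x\in\overline{\PP\Res}^{-1}(\mathcal{R})}(m_{x}-1),
\end{equation*}
where $m_{x}$ is the length of the scheme-theoretic fiber at $x$. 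Fixing $\mathcal{R}=[R_{1}:\cdots:R_{p}]$ generic on the hypersurface $\{P(R_{i}:i\in I)=0\}$ --- so that it lies on no other resonance hypersurface and off every other component of the branch locus of $\overline{\PP\Res}$ --- reduces the proof to computing the excess $\sum_{x}(m_{x}-1)$.

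The geometric core is to describe the preimages $x$ carrying excess and their multiplicities $m_{x}$. I would show that each such $x$ corresponds to a ``broken'' $k$-differential: a \emph{top part} $\zeta^{+}$, a $k$-differential on $\PP^{1}$ carrying $z_{1},z_{2}$ and the poles $\{p_{i}\}_{i\in I^{c}}$ but with one of $z_{1},z_{2}$ replaced by an extra marked point, together with a \emph{polar part} $\zeta^{-}$, a $k$-differential on $\PP^{1}$ carrying the poles $\{p_{i}\}_{i\in I}$ with the prescribed residues $\{R_{i}\}_{i\in I}$ and a marked point matched to the extra point of $\zeta^{+}$; the two are glued along that point, either as an honest node --- a boundary point of the compactification, which occurs for $|I|=1$ and for $|I|=p$ --- or by a flat-geometric surgery inside a smooth curve --- an interior ramification point, which occurs for $2\le|I|\le p-1$. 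When $I\subsetneq\{1,\dots,p\}$, letting $j\in\{1,2\}$ be the index of the replaced zero, the extra marked point has order $c_{j,I}-k$ on $\zeta^{+}$, so that $\zeta^{+}\in\Omega^{k}\mathcal{M}_{0}(\mu_{I,\emptyset})$ if $j=1$ and $\zeta^{+}\in\Omega^{k}\mathcal{M}_{0}(\mu_{\emptyset,I})$ if $j=2$; such a configuration exists only when $c_{j,I}-k>-k$, i.e.\ $c_{j,I}>0$, whence the truncations $c_{i,I_{>0}}=\max\{0,c_{i,I}\}$. In the remaining case $I=\{1,\dots,p\}$ one has $c_{1,I},c_{2,I}<0$ (because $a_{1}+a_{2}=B-2k$), neither zero can be replaced, and the top part is trivial ($I^{c}=\emptyset$); this is why the bracket of the statement degenerates to the constant $1$.

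Granting this description, the count is assembled from three factors. First, imposing that $\zeta^{+}$ have residues $\{R_{i}\}_{i\in I^{c}}$ cuts it down to the isoresidual fiber of the corresponding sub-stratum, of cardinality $d_{k}(\mu_{I,\emptyset})$ (resp.\ $d_{k}(\mu_{\emptyset,I})$) by Theorem~\ref{thm:main}, and of cardinality $1$ when $I=\{1,\dots,p\}$. Second, since every marked point of $\zeta^{-}$ but the matched one has order divisible by $k$, the differential $\zeta^{-}$ is a $k$-th power of an abelian differential $\omega$ on $\PP^{1}$, whose polar residues $r_{i}$ satisfy $r_{i}^{k}=R_{i}$ and, by the residue theorem on $\PP^{1}$, $\sum_{i\in I}r_{i}=0$ --- precisely the resonance equation attached to $I$; the root configurations $(r_{i})_{i\in I}$ are counted up to $\CC^{\ast}$ by $\Ab_{\mathcal{R}}(I)$, and for each one the number of abelian differentials on $\PP^{1}$ with those polar residues and a single zero (at the matched point, of the forced order) equals the universal quantity $f_{I}=f_{k}(B_{I}-k,|I|+1)$, by a self-contained residue computation on the sphere of the same nature as the one behind \eqref{eq:degre1}. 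Third, the reassembly of a fixed pair $(\zeta^{+},\zeta^{-})$ is governed by the prong structure at the shared point, and tracking the prong-matchings together with the level-rotation torus shows that this contributes a factor $c_{j,I}$ to the excess per such pair when $I\subsetneq\{1,\dots,p\}$, and $1$ when $I=\{1,\dots,p\}$. Multiplying the three factors and summing over $j\in\{1,2\}$ yields the excess $\bigl[c_{1,I_{>0}}d_{k}(\mu_{I,\emptyset})+c_{2,I_{>0}}d_{k}(\mu_{\emptyset,I})\bigr]f_{I}\,\Ab_{\mathcal{R}}(I)$ when $I\subsetneq\{1,\dots,p\}$ and $f_{I}\,\Ab_{\mathcal{R}}(I)$ when $I=\{1,\dots,p\}$; substituting into the displayed formula gives the two cases of the statement. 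Finally, since $f_{I}$ has degree $|I|-1$ and $d_{k}(\mu_{I,\emptyset}),d_{k}(\mu_{\emptyset,I})$ have degree $|I^{c}|-1$, the subtracted term has degree $p-1$, as does $d_{k}(\mu)$; the walls across which the polynomial changes are the hyperplanes $c_{i,I}=0$.

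The principal obstacle is the local analysis supporting the last two paragraphs: on the one hand, proving that over a generic point of $\{P(R_{i}:i\in I)=0\}$ the only preimages with excess are the broken ones described above --- in particular that no stray boundary divisor and no non-resonant ramification lies over this hypersurface --- and on the other hand, computing the local multiplicity $m_{x}$ at each, i.e.\ that the prong/level-rotation bookkeeping genuinely produces the factor $c_{j,I}$. This is where the multi-scale formalism (enhanced level graphs, prong-matchings, the level-rotation torus, plumbing coordinates) is indispensable, mirroring the intersection-theoretic computation behind Theorem~\ref{thm:main}; the auxiliary identity $\#\{\zeta^{-}\text{ realizing }\mathcal{R}|_{I}\}=f_{I}\cdot\Ab_{\mathcal{R}}(I)$ is best isolated as a separate lemma about abelian differentials on $\PP^{1}$.
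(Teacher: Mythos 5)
Your overall strategy -- degenerate to the multi-scale boundary, identify the excess with ``broken'' configurations whose polar part carries the poles of $I$ and is a $k$-th power of an abelian differential, and assemble the defect as $(\text{top-part count})\times(\text{abelian count}\cdot\Ab_{\mathcal R}(I))\times(\text{prong factor})$ -- is the same as the paper's, which realizes these configurations as the two-level graphs $\Gamma_{1,I},\Gamma_{2,I}$ (or $\Gamma_{z_1,z_2}$ when $I=\{1,\dots,p\}$) and evaluates $t_\Gamma\int\xi^{p-2}\delta_\Gamma$ on each. However, your bookkeeping has a genuine inconsistency. You open with $\#\,\overline{\PP\Res}^{-1}(\mathcal R)=d_k(\mu)-\sum_x(m_x-1)$, which counts \emph{all} points of the fiber of the extended map, boundary points included, each once; the theorem asks for the cardinality of the fiber over the \emph{open} stratum. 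If the excess preimages sit on the boundary (as they do in the paper, for every $I$), the correct relation is $\#(\text{interior fiber})=d_k(\mu)-\sum_{x\,\text{boundary}}m_x$, i.e.\ you must subtract the \emph{full} local multiplicity of each boundary point, not $m_x-1$. Your final assembly does in fact produce the full product $c_{j,I}\,d_k(\mu_{I,\emptyset})\,f_I\,\Ab_{\mathcal R}(I)$, which is $\sum m_x$ over the boundary locus -- so the two halves of your argument contradict each other, and only one of them matches the stated formula. The paper sidesteps this by never invoking degree conservation for a morphism: it computes the class of the closure of the interior isoresidual locus as $[\mathcal M_{p-2}]\cdot\bigl(\xi-\sum_\Gamma t_\Gamma D_\Gamma\bigr)$, explicitly removing the boundary divisors on which the last residue equation holds automatically.

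Two further points need repair. First, your claim that for $2\le|I|\le p-1$ the excess preimages are \emph{interior} ramification points on smooth curves, produced by a flat surgery, is not what happens: for every $I$ the defect is carried by nodal two-level configurations (a closed saddle connection enclosing exactly the poles of $I$ shrinks, and the surface leaves the open stratum), and the paper's divisors $\delta_{\Gamma_{i,I}}$ exist for all admissible $|I|$. Treating the middle range differently would also break your multiplicity computation, since the prong/level-rotation factor $c_{j,I}$ only makes sense at a node. Second, the residual map on the multi-scale compactification is only a rational map (it is undefined on residueless loci), so ``extends to a finite morphism with all fibers of length $d_k(\mu)$'' cannot be asserted without resolving the indeterminacy; the paper's excess-intersection formulation avoids needing this. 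The identification of the counting factors themselves ($d_k(\mu_{I,\emptyset})$, $d_k(\mu_{\emptyset,I})$, $f_I\cdot\Ab_{\mathcal R}(I)$ with the prong $B_I/k-1$ absorbed, and the truncations $c_{i,I_{>0}}$) is correct and matches the paper.
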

When there are an arbitrary number of resonant subsets, the set of indices can be partitioned in several ways as $\{1,\ldots,p\} = J_0 \sqcup J_1 \sqcup \cdots\sqcup J_s$, where $J_1,\ldots,J_s$ are resonant subsets, and the subset $J_0$ may be non-resonant and possibly empty. For example, the degree of the generic $k$-isoresidual fiber corresponds to the case $J_0=\{1,\ldots,p\}$, while for a $k$-isoresidual fiber with a single resonant subset $I$, we subtract from the degree of the generic fiber a term corresponding to $J_1=I$ and $J_0= I^c$. In general, the degree of the $k$-isoresidual fiber over an arbitrarily given $k$-residue tuple involves a contribution from each such partition, described as follows. 
\par 
\begin{thm}\label{thm:iso-arbitrary}
Let $k\geq2$ and $\mu=(a_1,a_2,-b_1,\ldots,-b_p)$ be a partition of $-2k$ such that $b_{1},\dots,b_{p}$ are positive integer multiples of $k$, while $a_{1}$ and $a_{2}$ are coprime to~$k$. Given an arbitrary tuple of $k$-residues $\mathcal{R}=[R_1:\cdots:R_p]$, the cardinality of the $k$-isoresidual fiber at $\mathcal{R}$ is   
    \begin{equation*}
        \sum_{J_0\sqcup J_1\sqcup \cdots \sqcup J_s}(-1)^sG_k(J_0;J_1,\ldots,J_s)\prod_{j=1}^{s}f_{J_j}\Ab_{\mathcal{R}}(J_j)\,,
    \end{equation*}
    where the sum ranges over all possible partitions   $J_0; J_1,\ldots,J_s$ such that each $J_i$ for $1\leq i \leq s$ is a resonant subset with respect to $\calR$, and  
    \begin{equation*}
      G_k(J_0;J_1,\ldots,J_s)  =  \begin{cases} 
      \sum_{d_{1,I}>0 } d_{1,I}(a_1+k)^{|I|-1}(a_2+k)^{|I^c|-1} & \text{if} \ J_0=\emptyset\,, \\
      
      \sum_{\substack{d_{1,I}>0 \\ d_{2,I^c}>0}} d_{1,I}d_{2,I^c}d_k(\mu_{I,I^c})(a_1+k)^{|I|-1}(a_2+k)^{|I^c|-1} & \text{if} \ J_0\not=\emptyset\,,
   \end{cases} 
    \end{equation*}
where for    $i=1,2$ and $I\subseteq\{1,\ldots,s\}$ 
\begin{equation*}
 d_{i,I}  =  c_{i, \ \sqcup_{j\in I} J_{j}}  \text{ and }\mu_{I,I^c} =  (d_{1,I}-k,d_{2,I^c}-k,\{-b_i\}_{i\in J_0}) \,.
\end{equation*}
\end{thm}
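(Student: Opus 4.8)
The plan is to read off Theorem~\ref{thm:iso-arbitrary} from the structure of the $k$-isoresidual cover on the multi-scale compactification, using Theorem~\ref{thm:main} and Theorem~\ref{thm:one-resonant} as the inputs for the top-level and bottom-level pieces respectively. After quotienting by the natural $\CC^{\ast}$-scaling, the isoresidual map extends to a finite morphism $\overline{\Res}\colon\overline{\Omega^{k}\mathcal{M}_{0}(\mu)}\to\PP^{p-1}$ of degree $d_{k}(\mu)$. Over a residue tuple $\calR=[R_{1}:\cdots:R_{p}]$ the fibre of $\overline{\Res}$ splits into the \emph{interior} points --- honest $k$-differentials on a smooth $\cpp$, which are exactly the points Theorem~\ref{thm:iso-arbitrary} counts --- and boundary points supported on multi-scale differentials. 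As in the proof of Theorem~\ref{thm:one-resonant}, one checks that the interior part of the fibre stays reduced over the entire resonant locus, so the desired cardinality equals $d_{k}(\mu)$ minus the total multiplicity carried by the boundary of the fibre, and the task reduces to identifying these boundary points and their multiplicities.

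Next I would classify the boundary points of $\overline{\Res}^{-1}(\calR)$ by their level graph. A multi-scale differential lying over $\calR$ has a top-level component, a $k$-differential on $\cpp$ carrying the two special zeros $z_{1},z_{2}$, the poles indexed by a subset $J_{0}\subseteq\{1,\dots,p\}$, and finitely many level-crossing nodes; each node is attached to a bottom-level component carrying the poles of a block of a partition $\{1,\dots,p\}\smallsetminus J_{0}=J_{1}\sqcup\cdots\sqcup J_{s}$. The global residue condition at each node, combined with the local form of the $k$-differential there, translates into the equation $\sum_{i\in J_{j}}r_{i}=0$ for compatible $k$-th roots $r_{i}^{k}=R_{i}$, so every block $J_{j}$ must be a resonant subset of $\calR$. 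Conversely, for a fixed partition $J_{0};J_{1},\dots,J_{s}$ with all blocks resonant, the boundary points of this combinatorial type lying over $\calR$ are counted as a product of a bottom contribution and a top contribution. Each bottom block $J_{j}$ is an isoresidual problem on a sub-stratum with one additional node; by Theorem~\ref{thm:one-resonant} and the definition of $\Ab$, its contribution is $f_{J_{j}}\cdot\Ab_{\calR}(J_{j})$, where $\Ab_{\calR}(J_{j})$ records the choices of $k$-th roots realising the resonance. The top contribution is governed by the choice $I\sqcup I^{c}=\{1,\dots,s\}$ of which blocks sit near $z_{1}$ versus near $z_{2}$, the orders $d_{1,I}-k$ and $d_{2,I^{c}}-k$ forced at the resulting zeros, the admissibility constraints $d_{1,I}>0$, $d_{2,I^{c}}>0$, and the prong-matching data at the nodes; I would show that this contribution equals $G_{k}(J_{0};J_{1},\dots,J_{s})$. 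When $J_{0}\neq\emptyset$ the top component is a genuine stratum $\Omega^{k}\mathcal{M}_{0}(\mu_{I,I^{c}})$, so Theorem~\ref{thm:main} supplies the factor $d_{k}(\mu_{I,I^{c}})$ while the prong/position bookkeeping supplies $(a_{1}+k)^{|I|-1}(a_{2}+k)^{|I^{c}|-1}$; when $J_{0}=\emptyset$ there is no top-level $k$-differential and the $d_{k}$-factor collapses, leaving only the combinatorial count.

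Finally I would assemble these contributions by inclusion-exclusion over the poset of level graphs. Summing the per-partition contributions naively over-counts: a degeneration realised by a level graph with $s$ bottom blocks is also detected by the coarser partitions obtained by merging blocks or by moving blocks into $J_{0}$, so each honest boundary point is counted once for every refinement of its own level graph into resonant blocks. Möbius inversion on this poset (equivalently, induction on $p$ and on the number of resonant blocks) produces the alternating factor $(-1)^{s}$, and since the per-partition contributions have already been normalised as the "new" pieces $G_{k}(J_{0};J_{1},\dots,J_{s})\prod_{j}f_{J_{j}}\Ab_{\calR}(J_{j})$, the interior count comes out as the stated alternating sum. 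The single-resonance case of Theorem~\ref{thm:one-resonant} serves both as a base case and as the normalisation check that the $G_{k}$-coefficients and signs are correct.

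The main obstacle I anticipate is the top-level count: proving that the contribution of the top-level component together with all gluing data is exactly $G_{k}(J_{0};J_{1},\dots,J_{s})$. This requires pinning down the orders of the zeros created at the level-crossing nodes and the admissible ways to distribute the $s$ blocks between $z_{1}$ and $z_{2}$, carrying out the prong-matching count that accounts for the factors $(a_{1}+k)^{|I|-1}$ and $(a_{2}+k)^{|I^{c}|-1}$, and --- most delicately --- verifying that after the inclusion-exclusion over level graphs everything collapses to the closed form above rather than to a less tractable alternating sum. Controlling possible further (three-or-more-level) degenerations and the edge cases where some $d_{i,I}$ vanishes, as well as the unified treatment of $a_{2}>-k$ and $a_{2}<-k$, will also require care.
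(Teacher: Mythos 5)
Your high-level template --- generic degree minus boundary corrections indexed by partitions of $\{1,\dots,p\}$ into resonant blocks, with each block contributing $f_{J_j}\Ab_{\calR}(J_j)$ and a ``top'' coefficient to be identified with $G_k$ --- is the right shape and matches the paper's strategy in spirit. But the proposal has a genuine gap precisely at the step you flag as the ``main obstacle'': the identity asserting that the total correction attached to a fixed partition $J_0;J_1,\dots,J_s$ equals $(-1)^sG_k(J_0;J_1,\ldots,J_s)\prod_j f_{J_j}\Ab_{\calR}(J_j)$ is never derived, and the mechanism you propose for it will not produce it. No single level graph contributes $G_k$ directly: in the paper the coefficient $\mathrm{Coeff}(J_1,\dots,J_s)$ is a sum over \emph{all} two-level graphs compatible with the partition, with $m+1$ top components for varying $m$, weighted by the twist $t_\Gamma$, a factor $(m-1)!$ from $\psi$-class intersections on the bottom component, and signs $(-1)^{s-m}$ coming from the abelian isoresidual counts of \cite{ChPrIso} on each top component. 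This sum is shown to telescope (the auxiliary polynomial $H$ vanishes at $B_{J_s}=0$ unless all blocks except one are absorbed), and the closed form $(-1)^sG_k$ is then extracted by a divisibility argument: $\prod_{j}B_{J_j}$ divides $\mathrm{Coeff}-(-1)^sG_k$, which has degree at most $s-1$ and hence vanishes. M\"obius inversion on the poset of partitions cannot replace this: its M\"obius function carries factorials of block sizes, not the bare sign $(-1)^s$, and the over-counting you describe (refinements and mergers of level graphs) is not what the iterated intersection actually records. The paper instead sets up a chain $\mathcal{M}_0\supset\cdots\supset\mathcal{M}_{p-1}$ of successive residue conditions and handles the term $\int_{\mathcal{M}_{p-2}}\xi$ by induction on the number of resonant subsets (perturbing the last residue off all resonances), which is the organizing device your proposal lacks.

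Two further points. First, your level-graph picture is upside down: in the multi-scale compactification the resonant blocks sit on \emph{top}-level components of abelian type (all orders divisible by $k$), where the $k$-GRC forces $\sum_{i\in J_j}r_i=0$ at the nodes going down, while one of the zeros $z_1,z_2$ lies on the bottom component and the other on a separate top component $X_0$ together with the poles of $J_0$; placing both zeros on a top component and the blocks at the bottom puts the residue condition on components where no GRC is imposed, and it also misses the case analysis ($X_0$ empty versus $J_0$ empty, which zero descends, and the vanishing of $\int\delta_\Gamma$ when the bottom component has positive dimension). Second, the residual map does not extend to a finite morphism on the compactification --- it is only rational, undefined on residueless loci --- so the ``degree $d_k(\mu)$ minus boundary multiplicity'' framing must be replaced by the intersection-theoretic one the paper uses.
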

\par 
Note that Theorem~\ref{thm:iso-arbitrary} could theoretically be used to recover  the classification of empty $k$-isoresidual fibers for such strata, as established in \cite[Theorems 1.6 and 1.9]{getaquad} (for quadratic differentials) and \cite[Theorem 1.6]{GT-k-residue} (for $k$-differentials with $k \geq 3$). However this produces delicate combinatorial identities involving the cardinality of the isoresidual fiber over each intersection of resonance hypersurfaces (see the examples at the end of Section~\ref{sec:ex}).
\par 
Finally, we remark that the above results generalize the case $k=1$, which was studied in \cite{GeTaIso} using flat geometry and later completed in \cite{ChPrIso} using intersection theory. Applications and alternative perspectives on this problem can be found in \cite{Sugiyama,BuRoCoun, OberWRS}. It seems to be unknown if these applications and perspectives can be extended to the context of $k\geq2$.
\smallskip
\par
\paragraph{\bf Organization of the paper:}
\begin{itemize}
    \item In Section~\ref{sec:FlatMetric}, we review the background on $k$-differentials, the local invariants of their singularities, their strata, and their relation to flat geometry. In particular, we introduce the residue map and the isoresidual fibration.
    \item In Section~\ref{sec:Resonance}, we introduce the resonance stratification of the $k$-residue space and show that the number of elements in isoresidual fibers is constant over the complement of the resonance locus.
    \item In Section~\ref{sec:multi-scale}, we describe the multi-scale compactification of strata of $k$-differentials.
    \item In Section~\ref{sec:Intersection}, we use intersection theory to compute the degree of the residue map above all the residue tuples, proving Theorems~\ref{thm:main},~\ref{thm:one-resonant}, and~\ref{thm:iso-arbitrary}. 
    \item In Section~\ref{sec:Flat}, we provide alternative flat-geometric arguments to prove  Corollary~\ref{cor:polesk}  and give various examples of computations.
    \item In Section~\ref{sec:Spherical}, we deduce, from Theorem~\ref{thm:main}, a counting result for a special class of spherical metrics, stated in Theorem~\ref{thm:Spherical}.
\end{itemize}

\paragraph{\bf Acknowledgements.} Research of D.C. is supported by National Science Foundation grant DMS-2301030, Simons Travel Support for Mathematicians, and a Simons Fellowship. Research of Q.G. is supported by the grant PAAPIT UNAM-DFG DA100124 ``Conectividad y conectividad simple de los estratos.'' Research of M.P. is supported by the DFG-UNAM project MO 1884/3-1 and the Collaborative Research Centre TRR 326 ``Geometry and Arithmetic of Uniformized Structures.'' Research of G.T. is supported by the Beijing Natural Science Foundation IS23005 and the French National Research Agency under the project TIGerS (ANR-24-CE40-3604).

\section{$k$-differentials}\label{sec:FlatMetric}

In this section, we review some general properties of $k$-differentials and their associated flat structures. The results collected here were proved in \cite{BCGGM3}.

\subsection{$k$-differentials and local models of singularities}\label{sec:locmodel}

A $k$-differential on a Riemann surface $X$ is a (holomorphic or meromorphic) section  that we assume to be not identically zero of the $k$-th tensor power of its canonical bundle. The degree of the associated $k$-canonical divisor is $k(2g-2)$.

Locally, a $k$-differential posses two invariants: its order and its $k$-residue. More precisely, given a $k$-differential $\zeta$, there exists a neighborhood of any point~$P$ and a biholomorphic change of coordinates such that~$\zeta$ takes the form:
\begin{equation}\label{eq:standard_coordinates}
    \begin{cases}
      z^m\, (dz)^{k} &\text{if $m> -k$ or $k\nmid m$,}\\
      \left(\frac{r}{z}\right)^{k}(dz)^{k} &\text{if $m = -k$,}\\
        \left(z^{m/k} + \frac{t}{z}\right)^{k}(dz)^{k} &\text{if $m < -k$ and $k\mid m$,}
    \end{cases}
\end{equation}
where $r \in \mathbb{C}^{\ast}$ and $t \in \mathbb{C}$. The integer $m$ is the {\em order} of $\zeta$ at the point~$P$.  The constants $r$ and $t$ are defined up to multiplication by a $k$-th root of unity. We define the {\em $k$-residue} $\Res_{P}(\zeta)$ of $\zeta$ at $P$ as~$r^{k}$ in the second case,  $t^{k}$ in the third case, and zero otherwise. In particular, the $k$-residue can be nonzero only for poles whose order is a  multiple of $k$.

Note that for every $k\geq2$ there is no residue theorem; that is, the sum of the $k$-residues of the poles on a compact Riemann surface need not vanish.

\subsection{Strata of $k$-differentials}\label{sec:strata}

Let  $\mu$ be a partition of $k(2g-2)$. We define the {\em stratum $\komoduli(\mu)$ of $k$-differentials of type $\mu$} to be the space of {\em primitive} $k$-differentials with marked zeros and marked poles of orders  prescribed by~$\mu$. Recall that a $k$-differential is primitive if it is not the global power of a lower-order differential. Except for a few cases (see \cite{Diaz-Marin,getaquad,GT-k-residue}), these strata are nonempty and form smooth complex orbifolds of dimension $2g-2+|\mu|$.
\par
In genus zero, it is easy to describe the partitions $\mu$ for which the strata of primitive $k$-differentials $\komoduli[0](\mu)$ are nonempty: they are precisely those $\mu$ for which the greatest common divisor of its parts is coprime to $k$.
\medskip
\par
Suppose that $\mu=(a_{1},\dots,a_{n},-b_{1},\dots,-b_{p},-c_{1},\dots, -c_{r})$ with $a_{i}>-k$, each $b_{i}$ divisible by $k$, and each $c_{i}$ not divisible by $k$. We define the \textit{residual space} $\mathcal{R}_{p}^{k}$ to be the complex vector space~$\CC^{p}$. Each stratum $\komoduli(\mu)$ is endowed with a \textit{residual map}
\begin{equation}\label{eq:resmap}
 \Res\colon\komoduli(\mu) \to \mathcal{R}^{k}_{p} : \zeta \mapsto (\Res_{p_{1}}\zeta,\dots,\Res_{p_{p}}\zeta)
\end{equation}
which assigns to each $k$-differential $\zeta$ the sequence of its $k$-residues at the poles $p_{i}$ of orders divisible by $k$. This map defines the \textit{isoresidual fibration} on the stratum $\komoduli(\mu)$.
\medskip
\par
In the case of abelian differentials (i.e., $k=1$), 
it is well known (see, for example, \cite[Section 3.3]{AMTransSurf}) that the strata admit local coordinates, called period coordinates, given by integration of the differential along a basis of the first homology of the surface without the poles, relative to the zeros of the differential. A similar description holds for strata of $k$-differentials with $k\geq2$ (see \cite[Corollary 2.3]{BCGGM3}). Roughly speaking, the coordinates are given by an eigenspace for the cyclic action on the canonical cover of the $k$-differentials.

\subsection{$(1/k)$-translation structures}\label{sec:transsurf}

Recall that to each $k$-differential $\zeta$ there is an associated well-defined $(1/k)$-translation structure. It is obtained by integrating a $k$-th root of $\zeta$, and is well defined up to a  rotation by an angle of $2\pi/k$. This yields a structure consisting of translations together with rotations by angles that are multiples of $2\pi/k$, with singularities whose cone angles are also multiples of $2\pi/k$.

Note that a zero of order $a_{i}\geq -k+1$ corresponds to a conical singularity with angle $\frac{a_{i}+k}{k}2\pi$. A {\em saddle connection} is a geodesic arc joining two conical singularities.

\section{Resonance stratification}\label{sec:Resonance}

We review the stratification of the residue space introduced for  quadratic differentials in \cite[Section 2.5]{getaquad} and extended to all $k\geq3$ in \cite[Section 2.4]{GT-k-residue}.

\subsection{Periods of saddle connections}\label{sub:Periods}

Given a $k$-differential $\zeta$ in $\Omega^{k}\mathcal{M}_{0}(a_{1},a_{2},-b_{1},\dots,-b_{p})$ with $k$-residues  $(R_{1},\dots,R_{p})$, the lengths of its saddle connections are proportional to the norms of partial sums of $k$-th roots of the~$R_{i}$.
\par
Indeed, any closed saddle connection $\gamma$ decomposes $\mathbb{CP}^{1}$ into two connected components, one of which is a translation surface bounded by $\gamma$. Applying the residue theorem to this component shows that the length of $\gamma$ coincides with the modulus of the sum of the $1$-residues at the poles contained in this component. Since these $1$-residues are $k$-th roots of the corresponding $k$-residues, the length of $\gamma$ is proportional to a partial sum of the $k$-th roots of $R_{1},\dots,R_{p}$.
\par
The case of saddle connections joining two distinct conical singularities is slightly more involved. Since there are at most two singularities whose order is not a multiple of $k$, cutting along such a saddle connection $\gamma$ yields a translation surface with two boundary saddle connections of period $z$ and $-\epsilon z$, where $\epsilon$ is a $k$-th root of unity. It then follows from the residue theorem that $(1-\epsilon)z+\sum\limits_{j=1}^{p} r_{j} =0$, where $r_{1},\dots,r_{p}$ are again $k$-th roots of the $k$-residues $R_{1},\dots,R_{p}$.
\par
Hence, the degeneration of any saddle connection corresponds to the vanishing of certain sums of $k$-th roots of the $k$-residues. We will encode these conditions in the definition of the \textit{resonance locus} below. 

\subsection{Resonance stratification of the residual space}\label{sub:ResStrat}

Let $W_{k}$ be $\lbrace{ 0\rbrace} \cup \lbrace{ e^{\frac{2\ell i\pi}{k}}~|~\ell \in \mathbb{Z}/k\mathbb{Z} \rbrace}$. In $\mathbb{C}^{p}$, a \textit{resonance hyperplane} is the set of points $(r_{1},\dots,r_{p})$ satisfying a nontrivial equation of the form
$$\sum\limits_{j=1}^{p} w_{j}r_{j}=0,\quad w_{j}\in W_{k}\,.$$
The collection $H_{k,p}$ of resonance hyperplanes defines a complex hyperplane arrangement in $\mathbb{C}^{p}$.

\begin{defn}\label{defn:stratres}
Let $\calR=(R_{1},\dots,R_{p})\in \mathcal{R}^{k}_{p}$ be a tuple of $k$-residues. We define the {\em set of resonance hyperplanes} of $\calR$ as $H_{k,p}(\calR) \subset H_{k,p}$, where each resonance hyperplane in $H_{k,p}(\calR)$ contains a tuple  $(r_{1},\dots,r_{p})$ of $k$-th roots of the corresponding entries of $\calR$. 
\par
We further define the \textit{resonance locus} of $\mathcal{R}^{k}_{p}$ as the image of the union of resonance hyperplanes under the map 
\begin{equation}\label{eq:projection}
 \CC^{p}\to\mathcal{R}^{k}_{p}:(r_{1},\dots,r_{p}) \mapsto (r_{1}^{k},\dots,r_{p}^{k})\,.
\end{equation}
Finally, we define the \textit{resonance stratification} as follows: two $k$-residue tuples $\calR$ and $\calR'$ belong to the same resonance stratum if and only if $H_{k,p}(\calR)=H_{k,p}(\calR')$. 
\end{defn}
\par
We remark that each stratum of the resonance stratification is an algebraic subset of $\mathcal{R}^{k}_{p}$.
\smallskip
\par 
Next, we give a description of the resonance locus by using the polynomial~$P$ introduced in Equation~\eqref{eq:polyanul}. First let us prove that this is in fact a polynomial.
\begin{lem}\label{lem:PSymmetric}
For any subset $I = \{i_1,\ldots, i_d\}$ of $\{1,\ldots, p\}$, the polynomial
\begin{equation*}
    P(R_{i_1},\ldots,R_{i_d})\coloneqq \prod_{\left\{(r_{i_1},\ldots,r_{i_d})\colon 
    r_{i_j}^k = R_{i_j}\right\}}(r_{i_1}+\cdots+r_{i_d})\,,
\end{equation*}
defined in Equation~\eqref{eq:polyanul} is a homogeneous polynomial of degree $k^{d-1}$ in the ring $\mathbb{Z}[R_1,\ldots,R_p]$.
\end{lem}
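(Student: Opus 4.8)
The plan is to show that $P(R_{i_1},\ldots,R_{i_d})$, a priori an element of the ring $\mathbb{C}[r_{i_1},\ldots,r_{i_d}]$ where $r_{i_j}$ denotes a formal $k$-th root of $R_{i_j}$, is actually invariant under the Galois action that sends each $r_{i_j}$ to $\omega_j r_{i_j}$ for an arbitrary $k$-th root of unity $\omega_j$, hence descends to a polynomial in the $R_{i_j} = r_{i_j}^k$. First I would fix the ambient object: work in the polynomial ring $S = \mathbb{Z}[r_{i_1},\ldots,r_{i_d}]$ and let $G = (\mu_k)^d$ act on $S$ coordinatewise, where $\mu_k$ is the group of $k$-th roots of unity (this requires adjoining $\zeta_k$, so really work over $\mathbb{Z}[\zeta_k]$ and descend at the end). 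The defining product in Equation~\eqref{eq:polyanul} is exactly $\prod_{g \in G} g\cdot(r_{i_1}+\cdots+r_{i_d})$, the norm of the linear form $\ell = r_{i_1}+\cdots+r_{i_d}$ under $G$. A norm is tautologically $G$-invariant: for any $h \in G$, $h\cdot \prod_{g\in G} g\cdot\ell = \prod_{g\in G}(hg)\cdot\ell = \prod_{g'\in G} g'\cdot\ell$ by reindexing. Therefore $P \in S^G$.

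Next I would identify $S^G$. Since $G$ acts by scaling the variable $r_{i_j}$ by roots of unity, a monomial $\prod_j r_{i_j}^{e_j}$ is fixed by all of $G$ if and only if $k \mid e_j$ for every $j$; hence $S^G = \mathbb{Z}[\zeta_k][r_{i_1}^k,\ldots,r_{i_d}^k] = \mathbb{Z}[\zeta_k][R_{i_1},\ldots,R_{i_d}]$. So $P$ is a polynomial in the $R_{i_j}$ with coefficients in $\mathbb{Z}[\zeta_k]$. To get coefficients in $\mathbb{Z}$, I would invoke that $P$ is moreover invariant under the absolute Galois action on $\mathbb{Q}(\zeta_k)$: an element $\sigma \in \mathrm{Gal}(\mathbb{Q}(\zeta_k)/\mathbb{Q})$ permutes the set of $k$-th roots $\{(r_{i_1},\ldots,r_{i_d}) : r_{i_j}^k = R_{i_j}\}$ among themselves (it just permutes the choices of roots of unity), so it fixes the product $P$; since the coefficients lie in $\mathbb{Z}[\zeta_k]$ and are fixed by this Galois group, they lie in $\mathbb{Z}[\zeta_k] \cap \mathbb{Q} = \mathbb{Z}$. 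Homogeneity and the degree count are then immediate bookkeeping: the product runs over $|G| = k^d$ linear factors, each of degree $1$ in the $r$-variables, so $P$ has degree $k^d$ in the $r$'s; assigning weight $1/k$ to each $R_{i_j}$ (equivalently, weight $1$ to each $r_{i_j}$) shows $P$ is homogeneous of degree $k^d / k = k^{d-1}$ in the $R_{i_j}$.

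I do not expect a serious obstacle here; the statement is essentially the observation that a norm form is Galois-invariant, packaged in the language of $k$-th roots of residues. The one point that needs a little care — and the closest thing to a ``main obstacle'' — is the descent of the coefficients from $\mathbb{Z}[\zeta_k]$ to $\mathbb{Z}$, i.e.\ making precise that the cyclotomic Galois group acts on the index set of the product by permutation. This is where I would be most explicit: write $R_{i_j} = \rho_j^k$ for one chosen root $\rho_j$, so that a general tuple of roots is $(\omega_1\rho_1,\ldots,\omega_d\rho_d)$ with $\omega_j \in \mu_k$; then $\sigma(\zeta_k) = \zeta_k^a$ with $\gcd(a,k)=1$ sends this tuple to $(\omega_1^a\rho_1,\ldots,\omega_d^a\rho_d)$ (after also acting on $\rho_j$, but the ambiguity in the choice of $\rho_j$ is absorbed into the $\omega_j$), which is again a tuple of $k$-th roots of the $R_{i_j}$, and the map $(\omega_j)_j \mapsto (\omega_j^a)_j$ is a bijection of $(\mu_k)^d$. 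Hence $\sigma$ permutes the factors of the product and fixes $P$, completing the argument.
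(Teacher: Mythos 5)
Your proposal is correct and follows essentially the same route as the paper's proof: both exploit that the product is invariant under rescaling each $r_{i_j}$ by a $k$-th root of unity (since this merely permutes the factors), so that only exponents divisible by $k$ survive, $P$ descends to a polynomial in the $R_{i_j}=r_{i_j}^{k}$, and the degree count $k^{d}/k=k^{d-1}$ follows from the $k^{d}$ linear factors. Your extra Galois-descent step establishing that the coefficients lie in $\mathbb{Z}$ rather than just $\mathbb{Z}[\zeta_k]$ is a point the paper's proof asserts implicitly (it places $P$ in $\mathbb{Z}[r_{i_1},\ldots,r_{i_d}]$ from the start without justification), so your treatment is, if anything, slightly more complete.
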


\begin{proof}
Given an arbitrary choice $(r_{i_{1}},\dots,r_{i_{d}})$ of $k$-th roots of the $k$-residues $(R_{i_1},\ldots,R_{i_d})$, the polynomial $P$ is homogeneous of degree $k^{d}$ in $\mathbb{Z}[r_{i_{1}},\dots,r_{i_{d}}]$. By construction, $P$ is  symmetric in $r_{i_{1}},\dots,r_{i_{d}}$. Then, for any variable $r_{i_{s}}$, we have $P=\sum_{\ell} r_{i_{s}}^{\ell}Q_{\ell} (r_{i_{1}},\dots,\hat{r}_{i_{s}},\dots,r_{i_{d}})$,  where $Q_{\ell}$ is a polynomial of degree $d-\ell$ in the remaining $d-1$  variables. Since $P$ is invariant under the substitution $r_{i_{s}}\mapsto\zeta r_{i_{s}}$, where $\zeta$ is a $k$-th root of unity, it follows that $Q_{\ell}=0$ unless $\ell \in k\mathbb{N}$. This implies that $P$ can be written as a polynomial in the variables $r_{i_{1}}^{k},\dots,r_{i_{d}}^{k}$. Therefore, $P$ is a homogeneous polynomial of degree $k^{d-1}$ in $\mathbb{Z}[R_1,\ldots,R_p]$.
\end{proof}

Observe that  $P(R_{i_1},\ldots,R_{i_d})$ vanishes if and only if $(R_{1},\dots,R_{p})$ belongs to the resonance locus. This observation motivates the following definition, which is used to rule out certain degenerate cases in the proof of Theorem~\ref{thm:main}.
\begin{defn}
    For $k>1$, a tuple of $k$-residues $\mathcal{R}=(R_1, \dots, R_p)$ is called \textit{general} if there is no subset $I\subset \{1,\ldots,p\}$ such that $P(\{R_i\}_{i\in I})=0$.
\end{defn}

\subsection{The residual systole and flat continuation}

In each resonance stratum, we define a \textit{residual systole} to control deformations of $k$-differentials.

\begin{defn}\label{defn:systole}
Let $\calR=(R_{1},\dots,R_{p})$ be a tuple of $k$-residues. We define the \textit{residual systole} $\sigma(\calR)$ for $\calR$ as 
 \[ \sigma(R) = \min \left\{ \left| \sum_{i\in I} r_{i} \right| : I\subset \lbrace1,\dots,p \rbrace, r_{i}^{k}=R_{i} \text{ and } \sum_{i\in I} r_{i} \neq 0 \right\}\,.\]
\end{defn}

\begin{prop}\label{prop:systole}
The residual systole is a continuous function on every resonance stratum.
\end{prop}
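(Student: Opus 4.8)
The plan is to express the residual systole on a fixed resonance stratum as a minimum of finitely many continuous functions, and then argue that on this stratum the set over which each function is minimized is controlled. Recall that the residual systole at $\calR=(R_1,\dots,R_p)$ is the minimum of $\left|\sum_{i\in I}r_i\right|$ over all subsets $I\subseteq\{1,\dots,p\}$ and all choices of $k$-th roots $r_i^k=R_i$, subject to the sum being nonzero. The first observation is that each individual quantity $\left|\sum_{i\in I}r_i\right|$, viewed as a function of $\calR$ along a \emph{continuous local choice} of $k$-th roots, is continuous; the subtlety is entirely that (i) the $k$-th roots are only locally, not globally, defined as functions of $\calR$ (monodromy around $R_i=0$), and (ii) we must exclude the vanishing sums, so the ``active'' index set of the minimum can jump.

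The key point is that on a \emph{single} resonance stratum, by Definition~\ref{defn:stratres}, the set of resonance hyperplanes $H_{k,p}(\calR)$ is constant. This means precisely that the collection of pairs $(I,(w_i)_{i\in I})$ with $w_i\in W_k$ for which $\sum_{i\in I}w_i r_i = 0$ has a solution with $r_i^k=R_i$ is \emph{independent of $\calR$} in the stratum: these are exactly the pairs whose associated resonance hyperplane lies in $H_{k,p}(\calR)$. Equivalently, for a pair $(I,(r_i))$ of roots, whether $\sum_{i\in I}r_i$ equals zero or not depends only on the ``combinatorial type'' of the roots (the ratios $r_i/r_j$ up to $W_k$), not on the point of the stratum — as long as none of the $R_i$ vanishes; and the locus $R_i=0$ either is all of the stratum in coordinate $i$ or misses it entirely, since that is itself cut out by resonance-type conditions. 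So one can cover the stratum by open sets on which all $k$-th roots $r_i(\calR)$ are chosen continuously, and on each such open set the function $\calR\mapsto\left|\sum_{i\in I}r_i(\calR)\right|$ is continuous, the set of $(I,(r_i))$ contributing to the minimum (those with nonzero sum) is fixed, and hence $\sigma$ is a minimum of finitely many continuous functions, therefore continuous. Since continuity is local, this proves continuity on the whole stratum.

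Concretely the steps I would carry out are: first, fix a resonance stratum $S$ and a point $\calR_0\in S$, and pick a contractible neighborhood $U$ of $\calR_0$ in $S$ small enough that for each $i$ with $R_i\not\equiv 0$ on $S$ we may choose a continuous branch $R_i\mapsto r_i^{(0)}(\calR)$ of a $k$-th root on $U$; all other $k$-th roots of $R_i$ are then $\omega\, r_i^{(0)}(\calR)$ for $\omega\in\mu_k$, again continuous on $U$. Second, observe that for any index $i$ with $R_i\equiv 0$ on $S$, the only $k$-th root is $0$, which is trivially continuous. Third, enumerate the finitely many pairs $(I,(r_i)_{i\in I})$ of subsets together with continuous choices of roots on $U$; for each, $g_{I,(r_i)}(\calR)\coloneqq\left|\sum_{i\in I}r_i(\calR)\right|$ is continuous on $U$. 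Fourth, invoke the constancy of $H_{k,p}$ on $S$ to conclude that the subcollection of pairs for which $g_{I,(r_i)}$ is \emph{not} identically zero on $U$ is the same as the subcollection for which $\sum_{i\in I}r_i(\calR_0)\neq 0$, and that for these $g_{I,(r_i)}$ never vanishes on $U$ (shrinking $U$ if necessary, using continuity and the finiteness of pairs). Fifth, conclude $\sigma|_U = \min g_{I,(r_i)}$ over this nonvanishing subcollection, a finite minimum of continuous positive functions, hence continuous; then let $\calR_0$ vary.

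The main obstacle, and the step deserving the most care, is the fourth: justifying rigorously that ``$\sum_{i\in I}r_i$ vanishes'' is a stratum-constant condition rather than one that can switch within $S$. This is where the definition of the resonance stratification does the work — the vanishing of a particular sum $\sum_{i\in I}r_i$ with $r_i^k=R_i$ is, after factoring out a common $k$-th root from one coordinate, exactly the statement that a specific resonance hyperplane with weights in $W_k$ lies in $H_{k,p}(\calR)$, and that membership is constant on $S$ by definition. One must be slightly careful that the identification of a ``continuous branch of roots on $U$'' with a ``weight vector in $W_k$'' is consistent across the stratum (this is where we use that $U$ is connected and that the $R_i$ that are not identically zero are nowhere zero on $U$, so no monodromy intervenes on $U$); but once this bookkeeping is set up, the continuity of $\sigma$ follows formally.
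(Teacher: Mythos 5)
Your proof is correct and takes essentially the same approach as the paper's, whose entire argument is a one-line version of yours: there are finitely many weighted sums of $k$-th roots, and each varies continuously with the $k$-residues. Your elaboration --- choosing continuous local branches of the roots and invoking the constancy of $H_{k,p}(\calR)$ on a resonance stratum to fix which sums vanish, so that $\sigma$ is locally a minimum of a fixed finite family of nonvanishing continuous functions --- is in fact more careful than the published proof about the only delicate point.
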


\begin{proof}
There are finitely many weighted sums of $k$-th roots, and each of them varies continuously as the $k$-residues vary.
\end{proof}

We now introduce a flat-geometric variant of analytic continuation. 

\begin{cor}\label{cor:defplate}
Consider a stratum $\Omega^{k}\mathcal{M}_{0}(a_{1},a_{2},-b_{1},\dots,-b_{p})$ of $k$-differentials. Let $\mathcal{S}$ be a stratum of $\mathbb{C}^{p}$ equipped with the resonance stratification. Then every isoresidual fiber over $\mathcal{S}$ contains the same number of elements.
\end{cor}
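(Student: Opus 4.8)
The plan is to deduce the statement from a flat-geometric continuation principle: over a resonance stratum $\mathcal{S}\subset\mathbb{C}^{p}$, any path can be lifted through the residual map, and the resulting endpoint assignment is a bijection between the two fibers. Concretely, fix a path $\gamma\colon[0,1]\to\mathcal{S}$ with $\gamma(0)=\calR$ and $\gamma(1)=\calR'$ (such a path exists when $\mathcal{S}$, or the connected component under consideration, is connected; in general the argument below yields constancy of the fiber cardinality on each connected component, and this is what the statement over all of $\mathcal{S}$ relies on). Starting from a $k$-differential $\zeta$ with $\Res(\zeta)=\calR$, I will build a continuous family $\zeta_{t}$ in $\Omega^{k}\mathcal{M}_{0}(a_{1},a_{2},-b_{1},\dots,-b_{p})$ with $\Res(\zeta_{t})=\gamma(t)$, and argue that $\zeta\mapsto\zeta_{1}$ is the desired bijection.

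For the construction, present the $(1/k)$-translation surface of $\zeta$ (Section~\ref{sec:transsurf}) as a gluing of finitely many planar polygons, so that the stratum is locally parametrized by the edge vectors, i.e.\ by period coordinates. By the analysis in Section~\ref{sub:Periods}, every edge vector and every saddle-connection period of such a $k$-differential is (proportional to) a weighted sum $\sum_{j}w_{j}r_{j}$ with $w_{j}\in W_{k}$ and $r_{j}^{k}=R_{j}$. Along $\gamma$ the set $H_{k,p}(\gamma(t))$ is by definition constant, so the collection of weighted sums that vanish does not change, while every nonvanishing weighted sum has modulus at least the residual systole $\sigma(\gamma(t))$ (Definition~\ref{defn:systole}), which by Proposition~\ref{prop:systole} is continuous and strictly positive, hence bounded below by some $\sigma_{0}>0$ on the compact set $\gamma([0,1])$; the same sums are also bounded above there. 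These two-sided bounds allow the polygonal presentation of $\zeta$ to be deformed continuously and uniquely along $\gamma$: no gluing edge collapses and the surface does not blow up, so the combinatorial type of the presentation persists. The family $\zeta_{t}$ so produced satisfies $\Res(\zeta_{t})=\gamma(t)$ by construction, has the prescribed cone angles and therefore the prescribed orders $a_{1},a_{2},-b_{i}$, and remains in the stratum; note that primitivity is automatic here, since $a_{1}$ is coprime to $k$ and hence no $k$-differential with these orders is a global power of a lower-order differential.

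To finish, observe that the continuation is unique — running $\gamma$ backwards from $\zeta_{t}$ recovers $\zeta$ — so $\zeta\mapsto\zeta_{1}$ is injective from the fiber over $\calR$ into the fiber over $\calR'$; applying the same argument to the reversed path produces the inverse, so the map is a bijection. Since $\calR,\calR'\in\mathcal{S}$ were arbitrary, all isoresidual fibers over (each connected component of) $\mathcal{S}$ have the same cardinality. Equivalently, the argument shows that $\Res$ restricted over $\mathcal{S}$ is a finite covering map.

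The main obstacle is the claim that the continuation is defined for all $t\in[0,1]$, with no finite ``blow-up time'': one must rule out that, as $t$ increases, the deforming flat surface either collapses a saddle connection or escapes every compact part of the stratum. The first is prevented by the uniform lower bound $\sigma_{0}$ on the nonvanishing weighted sums (Proposition~\ref{prop:systole}); the second by the uniform upper bound on the same sums, which bounds the diameter of the flat surface, together with the rigidity of genus-zero flat structures with prescribed cone data. An equivalent packaging of this step is to prove directly that $\Res\colon\Res^{-1}(\mathcal{S})\to\mathcal{S}$ is proper — the fibers are finite by equidimensionality (both spaces have dimension $p$), and the same period bounds show that no sequence of $k$-differentials whose residues converge in $\mathcal{S}$ can leave every compact subset of $\Res^{-1}(\mathcal{S})$ — whence it is a proper local homeomorphism and therefore a covering.
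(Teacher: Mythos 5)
Your proposal is correct and follows essentially the same route as the paper: a flat-geometric continuation argument in which saddle-connection periods are weighted sums of $k$-th roots of the residues, bounded below by the residual systole (Definition~\ref{defn:systole}, Proposition~\ref{prop:systole}), so that no degeneration occurs as the residues move within a resonance stratum and the fiber cardinality is locally constant. Your write-up is somewhat more careful than the paper's sketch in that it also addresses properness (ruling out escape to infinity) and the reduction to connected components, but the underlying mechanism is identical.
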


\begin{proof}
Let $\zeta$ be a $k$-differential whose configuration of $k$-residues is $\calR=(R_{1},\dots,R_{p}) \in \mathcal{S}$. In each chart of $\Omega^{k}\mathcal{M}_{0}(a_{1},a_{2},-b_{1},\dots,-b_{p})$, every saddle connection has a length proportional to a partial sum of $k$-th roots $r_{1},\dots,r_{p}$ of $R_{1},\dots,R_{p}$. These lengths are bounded below by the residual systole $\sigma(\calR)$ (see Definition~\ref{defn:systole}).
\par
Since the residual systole varies continuously in $\mathcal{S}$, there exists a neighborhood $V$ of $\calR$ in $\mathcal{S}$ where no saddle connection of $\zeta$ can degenerate as the flat surface $(\mathbb{CP}^{1},\zeta)$ is deformed. It follows that the number of $k$-differentials in the isoresidual fiber is locally constant on each resonance stratum.
\end{proof}

In the isoresidual cover, a {\em generic fiber} is an isoresidual fiber lying over a point in the generic stratum of the resonance stratification; that is, over a point in the complement of the resonance locus.

\begin{cor}\label{cor:degreeGen}
Given a stratum $\Omega^{k}\mathcal{M}_{0}(a_{1},a_{2},-b_{1},\dots,-b_{p})$ of $k$-differentials, the degree of the isoresidual cover equals the number of elements in each generic fiber. 
\end{cor}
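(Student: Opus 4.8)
The plan is to derive the statement directly from Corollary~\ref{cor:defplate} together with the elementary fact that the complement of the resonance locus is a single dense resonance stratum, and then to match the constant fiber count on that stratum with the degree of the cover.

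First I would observe that, by Definition~\ref{defn:stratres}, the resonance locus is the image of the finite hyperplane arrangement $H_{k,p}$ under the finite map $\CC^{p}\to\mathcal{R}^{k}_{p}$, $(r_{1},\dots,r_{p})\mapsto(r_{1}^{k},\dots,r_{p}^{k})$. Hence it is a finite union of irreducible hypersurfaces, in particular a proper Zariski-closed subset of $\mathcal{R}^{k}_{p}\cong\CC^{p}$ of pure codimension one. Its complement $\mathcal{S}_{0}$ is therefore a nonempty Zariski-open --- hence dense --- subset of $\CC^{p}$, and by Definition~\ref{defn:stratres} it is exactly the resonance stratum characterized by $H_{k,p}(\calR)=\emptyset$, which is by definition the generic stratum.

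Next I would apply Corollary~\ref{cor:defplate} with $\mathcal{S}=\mathcal{S}_{0}$, which gives that all isoresidual fibers over $\mathcal{S}_{0}$ have one and the same cardinality $N$; by definition, $N$ is the number of elements of a generic fiber. It then remains to identify $N$ with the degree $d$ of the isoresidual cover. Since the isoresidual map is a finite (proper) ramified cover of $\CC^{p}$, there is by definition of the degree a dense Zariski-open subset $U\subseteq\CC^{p}$ --- the complement of the branch locus --- over which every fiber consists of exactly $d$ points. Because $\CC^{p}$ is irreducible, the two dense open sets $U$ and $\mathcal{S}_{0}$ intersect; choosing any $\calR\in U\cap\mathcal{S}_{0}$ yields $N=\#\Res^{-1}(\calR)=d$. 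Consequently the degree of the isoresidual cover equals the cardinality of every generic fiber, as claimed.

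The only step that is not purely formal is the last one: it relies on knowing beforehand that $\Res$ is a finite ramified cover of $\CC^{p}$, so that the notions of degree and generic fiber are both defined and agree over a dense open subset --- this is the cover structure recalled in the introduction. Everything else follows from Corollary~\ref{cor:defplate} and the fact that the complement of a hypersurface arrangement in affine space is dense and connected, so I do not anticipate any genuine difficulty here.
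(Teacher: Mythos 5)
Your argument is correct and is essentially the paper's own (implicit) reasoning: the paper states this corollary as an immediate consequence of Corollary~\ref{cor:defplate}, relying on exactly the facts you spell out --- that the complement of the resonance locus is a connected dense Zariski-open stratum, and that the degree of the ramified cover is by definition the fiber count over the complement of the branch locus, which meets that stratum. Your write-up simply makes these steps explicit; there is no gap.
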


\section{The multi-scale compactification of isoresidual loci}\label{sec:multi-scale}

In this section, we recall the basics of the multi-scale compactification for strata of $k$-differentials, as studied in \cite{CoMoZaArea} and based on the works \cite{BCGGM3,BCGGM2}. This framework allows us to describe the closure of loci of $k$-differentials whose $k$-th roots of $k$-residues satisfy certain linear relations. Moreover, we discuss properties of multi-scale $k$-differentials contained in the closure of the isoresidual loci within this compactification. Since Theorem~\ref{thm:closureMSD} in this section applies to any genus, we denote by $\komoduli(\mu)$ a stratum of $k$-differentials of genus $g\geq0$ with signature $\mu$ when the results hold in these general cases.
\par
\subsection{The multi-scale compactification of strata}\label{sub:multi-scale}
\par
The multi-scale compactification $\MS(\mu)$ of the projectivized strata $\PP\komoduli(\mu) \coloneqq \komoduli(\mu) / \mathbb C^*$ is constructed in \cite{CoMoZaArea}. Here, we follow \cite[Section~2.1]{chge} to give the reader an introduction to {\em multi-scale $k$-differentials}. 
\begin{def}  \label{def:mskd}
A {\em multi-scale $k$-differential $(X,\bfz,\zeta,\preccurlyeq,\sigma)$ of type $\mu$} (usually written simply as $(X,\zeta,\sigma)$) consists of: 
\begin{itemize}
  \item[(i)] a stable pointed curve $(X,\bfz)$ with an enhanced level structure $\preccurlyeq$ on the dual graph~$\Gamma$ of~$X$;
  \item[(ii)] a twisted $k$-differential $(X,\bfz,\zeta)$ of type~$\mu$ together with a $k$-prong-matching $\sigma$ compatible with the enhanced level structure. 
\end{itemize}
\end{def}

In this definition, $(X,\bfz)$ is a stable pointed nodal Riemann surface of genus $g$, and $\zeta$ consists of a non-identically-zero $k$-differential $\zeta_{i}$ on each irreducible component $X_{i}$ of $X$. The total order $\preccurlyeq$ compares any two irreducible components of $X$ and encodes information about the vanishing rates of  differentials from nearby smooth surfaces as they degenerate to the sub-surfaces $X_i$. This order induces a level structure on the dual graph of $X$, which is referred to as a {\em level graph}.

Recall that the sum of the orders of the $\zeta_{i}$ at the two branches of every node is equal to $-2k$, and in the case of poles of order $-k$, the two $k$-residues satisfy $R_{1}+ (-1)^{k}R_{2}=0$.  If a node has two poles of order $k$ at its branches, the corresponding edge in the dual graph is called {\em horizontal}; otherwise, it is called {\em vertical}. At a vertical edge, if the multi-scale differential has a zero of order $n\geq0$ at the upper nodal point, the number $n+k$ is called the {\em prong number} of the node. This number gives the count of horizontal directions (up to a $k$-th root of unity) at the node.

Next, we discuss the $k$-prong-matching $\sigma$ and the global $k$-residue condition, which differs from the abelian case for $k\geq2$. Given a vertical edge~$e$ of the enhanced level graph~$\Gamma$, a {\em (local) $k$-prong-matching} $\sigma_e$ is defined as a cyclic, order-reversing bijection between the $k$-prongs at the upper and lower ends of~$e$. A {\em (global) $k$-prong-matching} is a collection
$\sigma = (\sigma_e)_{e \in E(\Gamma)^v}$ of local $k$-prong-matchings at every vertical edge. 

Consider a level $L$ and a component $Y$ of the part $\Gamma_{>L}$ of sub-surfaces lying strictly above $L$ in~$\Gamma$. We say that the restriction of the multi-scale $k$-differential $(X,\zeta)$ to $Y$ is of {\em abelian type} if this restriction is the $k$-th power of a multi-scale abelian differential (see \cite[Section~2.1]{chge} for  details). If $Y$ is not of abelian type, or if it contains a pole of $\zeta$, then there are no conditions on the residues at the poles connecting $Y$ with the rest of $X$. If $Y$ is of abelian type and does not contain a pole, it must satisfy the (usual) {\em global $k$-residue condition}:
\begin{itemize}
 \item[{\bf $k$-GRC}:]  $P(\Res_{e_{i}}(\zeta)) = 0$\,,
\end{itemize}
where $P$ is the polynomial defined in Equation~\eqref{eq:polyanul}.

\subsection{The closure of loci of $k$-differentials with linear residue conditions}\label{sub:closure}

One can define a similar residual map from the multi-scale compactification $\MS(\mu)$ to the projective residue space $\mathbb{P} \mathcal{R}^{k}_{p} = (\mathcal{R}^{k}_{p} \setminus \{ 0 \}) / \mathbb{C}^{*}$. In general, this map is only a rational map; for example, it is undefined on the locus of residueless differentials.
\par
Following \cite[Section 3]{CGPT1}, we now describe the closure of isoresidual fibers in the moduli space of multi-scale $k$-differentials. This discussion applies to arbitrary genus and partitions~$\mu$. Moreover, we work in the more general setting where a linear subspace of the $k$-th roots of the $k$-residues is fixed.
\par
Consider a stratum $\PP\komoduli(\mu)$ of meromorphic $k$-differentials of genus $g$ with signature~$\mu$. Let~$\Lambda$ be a linear subspace of the cover $\CC^{p}$ of the residual space $\mathcal{R}_{p}^{k}$ given in Equation~\eqref{eq:projection}.  Consider the subspace $\mathcal{F}_{\Lambda}$ consisting of $k$-differentials whose $k$-residues lie in the projection of~$\Lambda$. We define ${\Lambda}^{\vee}\subset (\CC^{p})^{\vee}$ to be the vector space of homogeneous linear equations satisfied by all residue tuples in $\Lambda$.
\par 
We aim to characterize when a multi-scale $k$-differential $(X,\zeta,\sigma)$ lies in the closure of~$\mathcal F_\Lambda$. To this end, we introduce the {\em generalized global $k$-residue condition} imposed on $(X,\zeta,\sigma)$ by~$\Lambda^{\vee}$, which we  denote by $\mathcal{E}_{\Lambda}^{k}$-GRC. Let $q_1,\ldots, q_p$ be the marked poles in $\Gamma$.  For each $q_i$, add a new vertex at level $\infty$ with a marked pole $q'_i$, and replace the original $q_i$ with an edge connecting it to this new vertex. The new vertex can be regarded as a semistable rational component carrying a pole at $q'_i$ of the same order as $q_i$. Denote by $\Gamma'$ the resulting level graph. For every finite level $L$ of $\Gamma'$, let $Y_1,\ldots, Y_s$ denote the connected components of $\Gamma'_{>L}$.
\par 
If $q_{i}$ does not appear in any equation of $\Lambda^{\vee}$, we say that it is a {\em free} pole.
The following conditions then hold:
\begin{itemize}
 \item[{\bf $\mathcal{E}_{\Lambda}^{k}$-GRC}:]
 For every $Y_i$ of abelian type, one of the following conditions holds: 
 \begin{enumerate}
  \item $Y_{i}$ contains a free pole. 
  \item For all equations $f\in \Lambda^{\vee}$ that can be written in the form
$$f = \sum_{i=1}^s a_i \left(\sum_{q'_j\in Y_i} r_{q'_j}\right)\,,$$
we require that there exist roots $r_{e_{j}}$ of the residues $R_{e_{j}}$ such that
$$\sum_{i=1}^s a_i \left(\sum_{e_j \in Y_{i, L}} r_{e_j} (\zeta)\right) = 0\,,$$
where the inner summation ranges over the lower endpoints of the edges in $Y_i$ that connect to vertices at level $L$.
 \end{enumerate}
 \end{itemize}
\par
We remark that, up to linear combinations, there are  only finitely many non-trivial conditions imposed by the $\mathcal{E}_{\Lambda}^{k}$-GRC. To see this, note that if two independent relations in condition~(2) involve the same subset of $Y_i$ with nonzero coefficients, their combination can produce another relation involving a smaller subset of $Y_i$. Repeating this process, one can reduce to a finite set of relations that form an echelon form at each level.
\par
In what follows, we discuss examples to illustrate the above description and the $\mathcal{E}_{\Lambda}^{k}$-GRC.
\begin{ex}
Consider a stratum parameterizing $k$-differentials of genus $g\geq 1$ with four poles, all of order divisible by~$k$. We impose the residue relation $r_{1}+3r_{2}+3r_{3}=0$.
Let $(X,\zeta,\sigma)$ be a multi-scale $k$-differential  in the boundary of this isoresidual locus, with level graph $\Gamma$ shown on the left of Figure~\ref{fig:exMSC} and the associated level graph~$\Gamma'$ on the right, where the marked poles of $\Gamma$ become the edges connecting to level~$\infty$ in~$\Gamma'$. Moreover, suppose that the restriction of~$\zeta$ to each component is the $k$-th power of an abelian differential.
\begin{figure}[ht]
\begin{tikzpicture}[scale=1]
\fill (0,2) coordinate (b1) circle (2pt);

\coordinate (a1) at (0,0);\fill (a1) circle (2pt);
\coordinate (a2) at (-.8,2);\fill (a2) circle (2pt);
\coordinate (a4) at (1,1);\fill (a4) circle (2pt);
\coordinate (a3) at (0,1);\fill (a3) circle (2pt);
\draw (a1) ..controls (-.6,1)  ..node[left]{$e_{1}$} (a2);
\draw (a1) --node[right]{$e_{5}$} (a3);
\draw (a1) ..controls (.8,.5)  ..node[right]{$e_{4}$} (a4);
\draw (a2) --++ (90:.3) node[above] {$q_{1}$};
\draw (b1) --++ (50:.3)node[above] {$q_{3}$};
\draw (b1) --++ (130:.3)node[above] {$q_{2}$};
\draw (a4) --++ (90:.3)node[above] {$q_{4}$};

 \draw (a3) ..controls (.2,1.5)  ..node[right] {$e_{3}$}  (b1);
 \draw (a3) ..controls (-.2,1.5)  ..node[left] {$e_{2}$}  (b1);

 \begin{scope}[xshift=5cm]
  \fill (0,2) coordinate (b1) circle (2pt);

\coordinate (a1) at (0,0);\fill (a1) circle (2pt);
\coordinate (a2) at (-.8,2);\fill (a2) circle (2pt);
\coordinate (a4) at (1,1);\fill (a4) circle (2pt);
\coordinate (a3) at (0,1);\fill (a3) circle (2pt);
\draw (a1) ..controls (-.5,1)  .. (a2);
\draw (a1) -- (a3);
\draw (a1) ..controls (.8,.5)  .. (a4);

 \draw[postaction={decorate}] (a3) ..controls (.2,1.5)  ..  (b1);
 \draw[postaction={decorate}] (a3) ..controls (-.2,1.5)  ..  (b1);

 \coordinate (c1) at (-.8,3);
\coordinate (c2) at (-.3,3);
\coordinate (c3) at (.3,3);
\coordinate (c4) at (1,3);
 \draw (a2) -- (c1);
\draw (b1) -- (c2);
\draw (b1) -- (c3);
\draw (a4) -- (c4);

\filldraw[fill=white] (c1) circle (2pt);
\filldraw[fill=white] (c2) circle (2pt);
\filldraw[fill=white] (c3) circle (2pt);
\filldraw[fill=white] (c4) circle (2pt);
 \end{scope}

\end{tikzpicture}
\caption{The level graphs $\Gamma$ and $\Gamma'$,  illustrating the $\mathcal E_{\Lambda}^{k}$-GRC. Vertices at level $\infty$ are shown in white.}\label{fig:exMSC}
\end{figure}
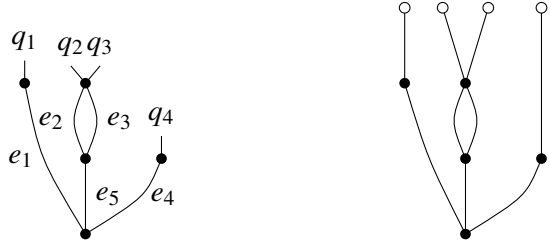
\par
The $\mathcal E_{\Lambda}^{k}$-GRC at level zero implies that the imposed residue relation still holds at level zero. However, this gives nothing beyond the residue theorem, yielding $r_1 = 0$ and $r_{2}+r_{3} = 0$.

Next, denote by $e_{i}$, for $i=1,2,3,4,5$, the residues of~$\zeta$ at the lower endpoints of the edges of $\Gamma$ with the same labels. Note that the $\mathcal E_{\Lambda}^{k}$-GRC at level $-1$ provides no additional information beyond the residue theorem. 

Now consider level $-2$, which presents two cases. In the first case, the prong-matching $\sigma$ is defined such that the middle component of $\Gamma$ is of abelian type. Then we obtain the condition  $e_{1}+3e_{5}=0$. In the second case, where $\sigma$ is defined so that the middle component is not of abelian type, the multi-scale $k$-differential lies in the closure of the isoresidual locus without imposing any further condition.
\end{ex}
\par
We can now state the main result of this section as follows.
\begin{thm}\label{thm:closureMSD}
 A multi-scale $k$-differential lies in the closure $\overline{\mathcal{F}}_{\Lambda}$
 of $\mathcal{F}_{\Lambda}$ if and only if it satisfies the $\mathcal{E}_{\Lambda}^{k}$-GRC at every level.
\end{thm}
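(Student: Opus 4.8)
The plan is to prove the two inclusions separately, following the strategy of the abelian case in \cite[Section~3]{CGPT1} but substituting, at each step, the plumbing construction for multi-scale $k$-differentials of \cite{BCGGM3,CoMoZaArea} for its abelian analogue, and the ordinary global $k$-residue condition of \cite{BCGGM3} for the classical residue theorem. Throughout, a point of $\MS(\mu)$ is smoothable by \cite{BCGGM3}, so the issue is only whether the smoothings can be arranged to have residues in the projection of $\Lambda$.

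\emph{Necessity} ($\overline{\mathcal{F}}_{\Lambda}$ satisfies the $\mathcal{E}_{\Lambda}^{k}$-GRC). Given a multi-scale $k$-differential $(X,\zeta,\sigma)$ in $\overline{\mathcal{F}}_{\Lambda}$, I would choose a family $\zeta_{t}\in\mathcal{F}_{\Lambda}$ degenerating to it in $\MS(\mu)$. For each $t$ the residue tuple lies in the projection of $\Lambda$, so one picks $k$-th roots $(r_{j}(t))_{j}\in\Lambda$; after passing to a subsequence these converge to a root tuple $(r_{j})_{j}\in\Lambda$ of the limiting residues recorded at the level-$\infty$ vertices of $\Gamma'$. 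Now fix a finite level $L$ and a connected component $Y_{i}$ of $\Gamma'_{>L}$ of abelian type with no free pole, and write $\zeta|_{Y_{i}}=\omega_{i}^{\otimes k}$. Identifying $Y_{i}$ with the nearly-degenerate subsurface of $X_{t}$ bounded by the vanishing cycles at the nodes joining it to levels $\le L$, the residue theorem applied to $\omega_{i,t}$ on that subsurface expresses the sum of its $1$-residues along those nodes in terms of the residues at the marked poles $q'_{j}$ inside $Y_{i}$. Letting $t\to0$ and using the continuity of the residue data along families degenerating in $\MS(\mu)$, then, for each equation $f=\sum_{i}a_{i}\bigl(\sum_{q'_{j}\in Y_{i}}r_{q'_{j}}\bigr)$ in $\Lambda^{\vee}$ of the grouped form, substituting the per-component residue-theorem relations to eliminate the $r_{q'_{j}}$ yields exactly the descended equation $\sum_{i}a_{i}\bigl(\sum_{e_{j}\in Y_{i,L}}r_{e_{j}}\bigr)=0$ of the $\mathcal{E}_{\Lambda}^{k}$-GRC at level $L$. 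Components not of abelian type or containing a free pole produce no relation, matching the fact that they are unconstrained by definition (the former because $\zeta$ has no global $k$-th root there and $k$-differentials obey no residue theorem; the latter because a free pole's residue can absorb any linear combination).

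\emph{Sufficiency} ($\mathcal{E}_{\Lambda}^{k}$-GRC implies membership in $\overline{\mathcal{F}}_{\Lambda}$). Here I would use the explicit plumbing construction: since $(X,\zeta,\sigma)\in\MS(\mu)$ there is a standard family of smooth $k$-differentials degenerating to it, and their $k$-residues admit an asymptotic expansion in the plumbing parameters whose leading terms at each level are governed precisely by the residue data entering the $\mathcal{E}_{\Lambda}^{k}$-GRC. The core of the argument is a descending induction on the levels of $\Gamma'$: at each level one adjusts the next-order coefficients of the plumbing and modification parameters so that all equations of $\Lambda^{\vee}$ hold exactly for the resulting residue tuple, and the linear system to be solved at that step is consistent precisely because the $\mathcal{E}_{\Lambda}^{k}$-GRC holds there — the free poles supplying the additional degrees of freedom that absorb any residual discrepancy. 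Combined with the convergence estimates of the plumbing construction in the $k$-setting \cite{CoMoZaArea} (a routine adaptation of \cite{BCGGM2}), this produces a family inside $\mathcal{F}_{\Lambda}$ limiting to $(X,\zeta,\sigma)$, whence $(X,\zeta,\sigma)\in\overline{\mathcal{F}}_{\Lambda}$.

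I expect this inductive construction to be the main obstacle: one must simultaneously track the prong-matchings (which pin down the roots of unity appearing in the residue matchings across nodes, and are exactly why the $\mathcal{E}_{\Lambda}^{k}$-GRC is phrased with ``there exist roots'' clauses rather than an all-roots condition), the abelian-type versus non-abelian-type dichotomy, and which equations of $\Lambda^{\vee}$ become active at which level. A possibly cleaner route for the sufficiency direction is to pass to the canonical $k$-cover: $(X,\zeta,\sigma)$ lifts to a multi-scale abelian differential on a nodal cover carrying a $\mathbb{Z}/k$-action, the subspace $\Lambda$ lifts to a $\mathbb{Z}/k$-invariant linear residue condition, and one invokes the abelian statement of \cite[Section~3]{CGPT1} together with the fact that the smoothing it produces can be chosen $\mathbb{Z}/k$-equivariantly and then pushed down — the delicate point there being the compatibility of $\Lambda$ with the cyclic action across all the marked poles at once.
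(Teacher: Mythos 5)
Your proposal follows essentially the same route as the paper's (itself only sketched) proof: necessity by degenerating families and the residue theorem on abelian-type components, and sufficiency by a level-by-level induction in which one constructs modification/plumbing data realizing the prescribed nodal residues, with the same three-way case split (non-abelian type, abelian type with a free pole, abelian type without free poles reducing to the abelian statement of \cite[Theorem 3.2]{CGPT1}). The only substantive item the paper adds that you omit is the preliminary reduction that horizontal nodes can be smoothed locally and independently, so one may assume all nodes are vertical; this is a minor point and your argument is otherwise the same.
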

The proof is very similar to that of \cite[Theorem 3.2]{CGPT1}; we therefore only sketch the main idea.
\begin{proof}
 The $\mathcal{E}_{\Lambda}^{k}$-GRC condition is stronger than the usual $k$-GRC, which ensures that multi-scale $k$-differentials can be smoothed into the interior of the stratum. Therefore, it suffices to verify only the additional linear residue conditions imposed by $\mathcal{E}_{\Lambda}^{k}$. First, we note that every horizontal node can be smoothed locally and independently. Consequently, we may assume that the  multi-scale $k$-differential under consideration has only vertical nodes.

The necessity of the condition follows exactly as in the case of abelian differentials. For sufficiency, fix a level~$L$ and suppose, by induction, that all edges whose lower endpoints lie at levels strictly above $L$ have already been smoothed. Consider the connected components $Y_i$ of $\Gamma'_{>L}$; we will smooth their edges connecting to level $L$, assuming that the $\calE_{\Lambda}^{k}$-GRC is satisfied.

Consider the poles $q'_{j}$ lying in the components $Y_{i}$. To prove the result, it suffices to construct a modification $k$-differential whose residues at the nodes agree with those at the lower endpoints, and such that the residues at the nodes satisfy the equations of $\Lambda^{\vee}$. 

If the component is not of abelian type, then by \cite[Lemma 4.4]{BCGGM3} there exists such a differential with zero residues at the poles (a trivial solution of $\Lambda^{\vee}$). If the component is of abelian type with some free poles, then we can choose a modification differential with zero residue at the other poles and with values at the free poles making the residue theorem hold. In the final case, the existence follows as in the case of abelian differentials treated in \cite[Theorem 3.2]{CGPT1}.
\end{proof}

\subsection{The closure of isoresidual fibers}\label{sub:isoclosure}

Since we are interested in isoresidual fibers, we restrict to the case $\Lambda = \mathbb{C}^{\ast} \cdot \lambda$, where $\lambda = (\lambda_1,\ldots, \lambda_p)$. In this case, we abuse notation and also write $\lambda$ instead of~$\Lambda$.
Note that when $\lambda=(0,\dots,0)$, we have $\mathcal{E}_{\lambda}^{\vee}=\mathcal{R}_{p}^{\vee}$. If $\lambda$ has at least one nonzero entry, then~$\mathcal{E}_{\lambda}^{\vee}$ is the image by the projection~\eqref{eq:projection} from $\CC^{p}$ to $\mathcal{R}_{p}^{\vee}$ of the generated by equations of the form
\begin{equation}\label{eq:relres}
 f_{i,j}(q_{1},\dots,q_{p})\coloneqq\lambda_{i}r_{q_{j}}-\lambda_{j}r_{q_{i}}=0\,.
\end{equation}

Analogous to the case of abelian differentials, we obtain the following results.
\begin{cor}\label{cor:reszero}
If a multi-scale $k$-differential lies in the closure $\overline{\mathcal{F}}_{\lambda}$, then only the poles at the lowest level can have nonzero residues. Moreover, if two poles $q_i$ and $q_j$ lie at the same level, their residues satisfy the relation $f_{i,j} = 0$ in \eqref{eq:relres}.
\end{cor}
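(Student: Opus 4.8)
The plan is to deduce this from Theorem~\ref{thm:closureMSD} by writing out the $\mathcal{E}_{\lambda}^{k}$-GRC for the one‑dimensional subspace $\Lambda=\mathbb{C}^{\ast}\cdot\lambda$, following the template of the analogous statement for abelian differentials in \cite{CGPT1}. If $\lambda=(0,\dots,0)$ then $\mathcal{F}_{\lambda}$ is the residueless locus, so every residue of a multi‑scale $k$‑differential in its closure vanishes and there is nothing to prove; hence assume $\lambda\neq0$. As recalled just above \eqref{eq:relres}, $\lambda^{\vee}$ is then the image under \eqref{eq:projection} of the codimension‑one subspace of $(\mathbb{C}^{p})^{\vee}$ generated by the forms $f_{i,j}=\lambda_{i}r_{q_{j}}-\lambda_{j}r_{q_{i}}$. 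Two elementary features of this space will be used repeatedly: every coordinate form $r_{q_{i}}$ with $\lambda_{i}=0$ belongs to $\lambda^{\vee}$, and, having codimension one, $\lambda^{\vee}$ allows the coefficient of any single variable $r_{q_{i}}$ to be prescribed and then completed to an element of $\lambda^{\vee}$ by adjusting one further coefficient at a pole $q_{j}$ with $\lambda_{j}\neq0$.

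Next I would unwind the $\mathcal{E}_{\lambda}^{k}$-GRC on the auxiliary graph $\Gamma'$ of Section~\ref{sub:closure}, proceeding by induction over its levels. At a level $L$, for each connected component $Y$ of $\Gamma'_{>L}$ that is of abelian type and carries no free pole, the condition equates $\lambda^{\vee}$‑combinations of the $k$‑th roots of the residues at the poles of $Y$ with the corresponding combinations of residues at the lower endpoints of the edges of $Y$ descending to level $L$; together with the residue theorem on the components of the abelian‑type piece $Y$, this propagates the vanishing of residues downward along $\Gamma'$. Concretely, if a marked pole $q_{i}$ is not at the lowest level, then either $\lambda_{i}=0$, in which case feeding the form $r_{q_{i}}\in\lambda^{\vee}$ into the $\mathcal{E}_{\lambda}^{k}$-GRC at the level of $q_{i}$ — where the level‑$\infty$ vertex carrying $q'_{i}$ is its own component, of abelian type since $k\mid b_{i}$ — forces $\sqrt[k]{\Res_{q_{i}}(\zeta)}=0$; or $\lambda_{i}\neq0$, in which case the form $f_{i,j}$ for a pole $q_{j}$ with $\lambda_{j}\neq0$ sitting at a lower level, evaluated at the level of $q_{i}$ and using the residues already shown to vanish higher up, gives the same conclusion. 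Iterating, the only marked poles with nonzero residue are those at the lowest level. For two such poles $q_{i},q_{j}$, taking $f=f_{i,j}$, which is expressible in the form required by the $\mathcal{E}_{\lambda}^{k}$-GRC once all other residues are known to vanish, produces $\lambda_{i}r_{e_{j}}=\lambda_{j}r_{e_{i}}$ for suitable $k$‑th roots of $\Res_{q_{j}}(\zeta)$ and $\Res_{q_{i}}(\zeta)$, i.e.\ the relation $f_{i,j}=0$ of \eqref{eq:relres}.

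The delicate point, which I expect to be the main obstacle, is that the $\mathcal{E}_{\lambda}^{k}$-GRC imposes nothing directly on components $Y$ that fail to be of abelian type, nor on a free pole when one exists (that is, when $\lambda$ is a multiple of a coordinate vector); one must argue separately that such a component carries no marked pole with nonzero residue and, in the free‑pole case, that the pole in question is indeed forced to the lowest level. This should go exactly as in the proof of Theorem~\ref{thm:closureMSD}: by \cite[Lemma 4.4]{BCGGM3} a non‑abelian‑type component admits a modification differential with vanishing residues at its poles, and in our genus‑zero strata only two singularities have order not divisible by $k$, which tightly limits how and where a component can fail to be of abelian type; the basic $k$‑GRC together with the flat‑continuation picture of Corollary~\ref{cor:defplate} then forces the residues at the poles on such components to vanish as well. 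With this in hand the induction goes through, and both assertions follow.
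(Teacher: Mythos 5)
The paper offers no written proof of this corollary (it is introduced with ``Analogous to the case of abelian differentials\dots''), and your overall strategy --- specialize Theorem~\ref{thm:closureMSD} to $\Lambda=\mathbb{C}^{*}\lambda$, describe $\lambda^{\vee}$ by the forms $f_{i,j}$, and read off the $\mathcal{E}_{\lambda}^{k}$-GRC level by level using the singleton level-$\infty$ components $\{q_i'\}$ --- is exactly the intended one. Your treatment of the case $\lambda_i=0$ (feed $r_{q_i}\in\lambda^{\vee}$ into the condition at the level of $q_i$) and of the second assertion (for $q_i,q_j$ at the same level $L$, both $\{q_i'\}$ and $\{q_j'\}$ are singleton abelian-type components of $\Gamma'_{>L}$, so $f_{i,j}$ is expressible and yields $\lambda_i r_{e_j}-\lambda_j r_{e_i}=0$) is correct.

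The genuine gap is in the case $\lambda_i\neq0$. Your argument needs a pole $q_j$ with $\lambda_j\neq0$ sitting at a level \emph{strictly below} that of $q_i$: then $\{q_j'\}$ contributes an empty sum at the level of $q_i$ and $f_{i,j}$ forces $r_{e_i}=0$. But such a $q_j$ need not exist. What the $f_{i,j}$-manipulations actually prove is that all poles with nonzero residue sit on the single level $L^{*}$ equal to the lowest level carrying a pole that appears in the equations of $\lambda^{\vee}$ --- not the lowest level of the level graph. The lowest level of the graph may very well carry only the zeros $z_1,z_2$ (or only poles with $\lambda_j=0$), in which case the $\mathcal{E}_{\lambda}^{k}$-GRC imposes nothing further through your mechanism, and $L^{*}$ sits strictly above the bottom. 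Ruling such configurations out (when they can be ruled out at all --- for resonant $\lambda$ they generally cannot, and the statement must then be read with ``lowest level'' meaning $L^{*}$) requires a different input that your proof never invokes: a component of $\Gamma'_{>L}$ lying entirely above the nonzero-residue poles and containing neither $z_1$ nor $z_2$ has all orders divisible by $k$, hence is of abelian type, and the residue theorem for its abelian root at the top level forces a resonance $\sum_{i}r_i=0$ among the corresponding $\lambda_i$; combined with the genus-zero stability constraint that every bottom component carries at least three special points, this is what pins the nonzero-residue poles to the bottom in the non-resonant case. Your closing paragraph, which worries instead about non-abelian-type components and free poles and appeals to \cite[Lemma 4.4]{BCGGM3} and Corollary~\ref{cor:defplate}, does not supply this missing step --- indeed the singleton components you need are always of abelian type, so that is not where the difficulty lies.
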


\begin{cor}\label{cor:reszeros}
Given $\lambda\in \mathcal R_p$,
 suppose $(X,\zeta)$ lies in the isoresidual fiber $\overline{\calF}_{\lambda}$, where $X$ is smooth. Then, if one residue of $\zeta$ vanishes, all residues of $\zeta$ vanish. Conversely, the locus of such residueless differentials (up to scalar multiple) is contained in $\overline{\calF}_{\lambda}$ for every $\lambda\in \mathcal R_p$.
\end{cor}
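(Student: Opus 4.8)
The plan is to read off both assertions from the description of $\overline{\calF}_{\lambda}$ supplied by Theorem~\ref{thm:closureMSD} together with Corollary~\ref{cor:reszero}; no input beyond the multi-scale bookkeeping should be required.

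First I would settle the first assertion. A point $(X,\zeta)$ of $\overline{\calF}_{\lambda}$ with $X$ smooth is a multi-scale $k$-differential with trivial level graph, so all $p$ marked poles sit at the single, hence lowest, level. By Corollary~\ref{cor:reszero} their residues then satisfy every relation $f_{i,j}=0$ of~\eqref{eq:relres}; that is, for each pair $i\neq j$ there are $k$-th roots $r_{q_i},r_{q_j}$ of $R_i,R_j$ with $\lambda_i r_{q_j}=\lambda_j r_{q_i}$. Raising each identity to the $k$-th power removes the ambiguity of the roots and yields $\lambda_i^{k}R_j=\lambda_j^{k}R_i$ for all $i,j$, so $(R_1,\dots,R_p)=s\,(\lambda_1^{k},\dots,\lambda_p^{k})$ for a single scalar $s$. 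In the relevant case where $\lambda$ has no vanishing entry, the vanishing of any one $R_i$ forces $s=0$, hence the vanishing of all residues. (Equivalently: $\PP\Res$ is constantly $[\lambda_1^{k}:\cdots:\lambda_p^{k}]$ on $\calF_{\lambda}$ and is a morphism away from the residueless locus, so a smooth point of $\overline{\calF}_{\lambda}$ is either residueless or has residue tuple proportional to $(\lambda_i^{k})$.)

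For the converse I would take any residueless $\zeta\in\komoduli[0](\mu)$ --- so $R_i=0$ for all $i$ --- view $(X,\zeta)$, $X$ smooth, as a multi-scale $k$-differential with trivial level graph, and verify the $\mathcal{E}_{\lambda}^{k}$-GRC, which by Theorem~\ref{thm:closureMSD} places $(X,\zeta)$ in $\overline{\calF}_{\lambda}$. In the auxiliary graph $\Gamma'$ one attaches to each marked pole $q_i$ a level-$\infty$ vertex $v_i$ carrying a pole $q'_i$ and joined to the unique finite-level component $X$ by an edge $e_i$; above that level the connected components of $\Gamma'_{>L}$ are exactly the singletons $\{v_i\}$, which are of abelian type. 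The $\mathcal{E}_{\lambda}^{k}$-GRC therefore only requires, for each relation of $\Lambda^{\vee}$ written through the $r_{q'_j}$, the vanishing of the corresponding combination of the $k$-th roots $r_{e_i}(\zeta)$ of the residues of $\zeta$ at the lower endpoints of the edges $e_i$. But those lower endpoints are precisely the marked poles of $\zeta$, whose residues all vanish, so every such relation holds with all $r_{e_i}(\zeta)=0$; the condition is vacuously satisfied and $(X,\zeta)\in\overline{\calF}_{\lambda}$ for every $\lambda$.

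I do not expect a genuine obstacle: the only step requiring care is to correctly unwind the $\mathcal{E}_{\lambda}^{k}$-GRC for the trivial level graph and to match its ``lower-endpoint residues'' with the residues of $\zeta$ at its marked poles, after which the global $k$-residue condition collapses. Were one to bypass Theorem~\ref{thm:closureMSD}, the converse would instead call for a deformation argument --- turning on the residues of the $p$ poles along the direction $\lambda$ inside the smooth stratum $\komoduli[0](\mu)$ --- and that deformation-theoretic step is where the real work would lie.
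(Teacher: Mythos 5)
Your argument is correct and follows the route the paper intends: the paper states this corollary without proof, as a direct consequence of Theorem~\ref{thm:closureMSD} and Corollary~\ref{cor:reszero}, and you supply exactly that derivation (the forward direction by raising the relations $f_{i,j}=0$ at the unique level to the $k$-th power to get proportionality of the residue tuple to $(\lambda_1^k,\dots,\lambda_p^k)$, the converse by checking that the $\mathcal{E}_{\lambda}^{k}$-GRC for the trivial level graph is satisfied with all lower-endpoint roots equal to zero). Your remark that the first assertion, as literally stated, requires every entry of $\lambda$ to be nonzero correctly identifies an imprecision in the statement rather than a gap in your proof.
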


\section{Counting via intersection theory}\label{sec:Intersection}

In this section, we use intersection theory to prove Theorems~\ref{thm:main}, \ref{thm:one-resonant}, and \ref{thm:iso-arbitrary}, 
which determine the cardinality of the fibers in the isoresidual cover of strata of $k$-differentials on the Riemann sphere with two singularities of orders not divisible by~$k$.

\subsection{Preliminaries}

Given a stratum of abelian differentials $\omoduli[0](\mu)$ of genus zero with a unique zero,
the function 
$$f(a,p)\coloneqq\frac{a!}{(a-(p-2))!}$$ 
represents the degree of the top intersection product $\int_{\text{MS}(\mu)}\xi^{p-2}$, where  $\xi=c_1(\mathcal{O}_{\text{MS}(\mu)}(1))$ is the universal line bundle class of the multi-scale compactification $\text{MS}(\mu)\coloneqq\mathbb{P}\Xi\overline{\mathcal{M}}_{0,p+1}(\mu)$ as proved in \cite{ChPrIso}. The purpose of this subsection is to extend this result to the setting of $k$-differentials and to study the properties of the counting function that generalizes~$f$.
This relies on the notion of multi-scale $k$-differentials recalled in Section~\ref{sub:multi-scale}.

\begin{prop}\label{prop:k-minimal}
Let ${\text{MS}}^k(\mu)$ denote the multi-scale compactification of the stratum of $k$-differentials of signature $\mu=(a,-b_1,\ldots,-b_p)$ on the Riemann sphere, with $a>-k$ and $p$ poles of order $-b_{i}\leq-k$.
 
Then the degree of the top intersection product of the universal line bundle class is given by 
\begin{equation}\label{eqn:k-factorial}
    \int_{\text{MS}^k(\mu)}\xi^{p-2}=f_k(a,p) \,,
\end{equation}
where $f_{k}(a,p)$ is defined in Equation~\eqref{eq:fk}.
\end{prop}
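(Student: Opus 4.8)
The plan is to reduce the computation of $\int_{\text{MS}^k(\mu)}\xi^{p-2}$ for $\mu=(a,-b_1,\dots,-b_p)$ on $\mathbb{P}^1$ to the already-known abelian case $\int_{\text{MS}(\mu')}\xi^{p-2}=\frac{a!}{(a-(p-2))!}$ of \cite{ChPrIso}, and then to verify the three-regime formula \eqref{eq:fk} by direct bookkeeping of small-$p$ base cases and degenerations. Recall that $\text{MS}^k(\mu)=\mathbb{P}\Xi\overline{\mathcal{M}}_{0,p+1}^{k}(\mu)$ and that the universal line bundle $\mathcal{O}(1)$ pulls back along the natural finite map to the canonical cover; since a genus-zero $k$-differential with a single zero (order $a$ coprime to $k$ or not, but $a>-k$) has a connected canonical cover on which it becomes the $k$-th power of an abelian differential, I would first set up the degree-$d$ unramified-away-from-singularities map from a space of such abelian differentials on the cover to $\text{MS}^k(\mu)$, or more efficiently work directly on $\text{MS}^k(\mu)$ using the fact that $\xi$ is the first Chern class of the same universal bundle. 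The key point is that $\xi$ on $\text{MS}^k(\mu)$ restricted to the interior (the projectivized stratum, a point when $p=2$ and an open subset of $\mathbb{P}^{p-2}$ in general) satisfies the same incidence relations with boundary divisors as in the abelian case, because the multi-scale boundary structure of genus-zero strata with one zero is governed by the same combinatorics of level graphs (stable trees with one special leaf carrying the zero and $p$ leaves carrying the poles).

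Concretely, the key steps, in order: (1) Handle $p=1$ and $p=2$ by hand. For $p=2$ the stratum is a single point (a $k$-differential on $\mathbb{P}^1$ with one zero of order $a=b_1-2k$ and one pole of order $-b_1$ is rigid up to scaling), so $\int \xi^{0}=1=f_k(a,2)$; the $p=1$ case is the conventional normalization $\xi^{-1}$, i.e. the formula $f_k(a,1)=\frac{1}{a+k}$ is a definition that makes the recursion below work (there is genuinely no moduli, and the value is forced by the $p=3$ computation read backwards). (2) For $p\geq 3$, use the standard recursion for $\xi$-intersections on multi-scale compactifications coming from the boundary divisor in which one pole $-b_p$ "bubbles off," or equivalently from the relation expressing $\xi$ in terms of $\psi$-classes and boundary divisors on $\overline{\mathcal{M}}_{0,p+1}$ pulled back to $\text{MS}^k(\mu)$. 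This produces $\int_{\text{MS}^k(\mu)}\xi^{p-2} = (a-k(p-3))\cdot \int_{\text{MS}^k(\mu')}\xi^{p-3}$ where $\mu'$ has one fewer pole and zero order adjusted by the merge; iterating from $p=3$ downward yields $\prod_{0\le j\le p-3}(a-kj)=f_k(a,p)$. (3) Verify the base of the recursion at $p=3$: here $\text{MS}^k(\mu)$ is one-dimensional, $\xi$ is ample on it, and $\int\xi = a+k$ can be computed either by the canonical-cover comparison (the cover is a sum of abelian strata whose $\xi$-degrees add up, with a factor $k$ absorbed by the projectivization) or by an explicit flat-geometric degeneration count; this gives $f_k(a,3)=a+k$... wait, rather $f_k(a,3)=\prod_{0\le j\le 0}(a-kj)=a$, so the $p=3$ value is $a$, and one checks the recursion $(a)= (a-k\cdot 0)\cdot f_k(a',2)$ with the merged signature—these small-case sign and index conventions are exactly what pin down the anomalous definitions $f_k(a,1)=1/(a+k)$, $f_k(a,2)=1$.

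The main obstacle will be step (2): establishing the exact recursion $\int_{\text{MS}^k(\mu)}\xi^{p-2}=(a-k(p-3))\int_{\text{MS}^k(\mu')}\xi^{p-3}$, including getting the coefficient $a-k(p-3)$ right with the correct sign, which amounts to correctly intersecting $\xi$ with the boundary divisor of $\text{MS}^k(\mu)$ parametrizing a two-level graph with a top-level rational component carrying the zero and $p-1$ of the poles and a bottom-level component carrying the last pole and the node. One must check that $\xi$ restricted to this divisor equals $\xi$ of the smaller multi-scale space (up to the explicitly computable twist from the prong/node data, which contributes the $-k(p-3)$ shift and whose sign is controlled by the orientation of the level structure), and that the "other" boundary contributions either vanish against $\xi^{p-3}$ for dimension reasons or cancel. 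For $k=1$ this is precisely the computation carried out in \cite{ChPrIso}; the content here is that the $k$-differential version introduces only the uniform substitution of $k$ for $1$ in the prong-number shifts, because in genus zero with a single zero every relevant top-level component is forced to be of abelian type and the global $k$-residue condition imposes no extra constraint on these one-zero strata. Once the recursion and the $p=2$ base case are in place, the three-regime formula \eqref{eq:fk} follows immediately, with the $m=1$ and $m=2$ clauses being the conventional extensions that make $f_k(a,m)$ satisfy $f_k(a,m)=(a-k(m-3))f_k(a,m-1)$ for all $m\geq 3$.
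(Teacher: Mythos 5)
There is a genuine gap, and it sits exactly where you flagged ``the main obstacle'': step (2), the recursion. The clean multiplicative recursion $\int_{\text{MS}^k(\mu)}\xi^{p-2}=(a-k(p-3))\int_{\text{MS}^k(\mu')}\xi^{p-3}$ does not hold. First, the bookkeeping is inconsistent: if $\mu'$ has one fewer pole, the degree constraint forces its zero order to be $a-b_p$, and $(a-k(p-3))\,f_k(a-b_p,p-1)\neq f_k(a,p)$ for any $b_p>0$ (already for $k=1$, $p=4$ one gets $(a-1)(a-b_4)$ instead of $a(a-1)$). The only clean recursion of this shape is $f_k(a,p)=a\,f_k(a-k,p-1)$, which corresponds geometrically to splitting off a pole of order exactly $-k$ and is unavailable for general $b_i$. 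Second, the boundary divisor you describe (zero and $p-1$ poles on top, one pole at the bottom) is not a valid two-level graph: the node on the top component would have order $-b_p\leq -k$, which cannot be the upper end of a vertical edge; the admissible graph has the zero and one pole at the \emph{bottom} and the remaining $p-1$ poles on top. Third, and most importantly, the correct relation at a marked pole is $\xi=(b_i-k)\psi_{p_i}+\sum_{\Gamma}t_\Gamma D_\Gamma$, and the $\psi$-class term does \emph{not} vanish against $\xi^{p-3}$ and does not cancel; it is a genuine contribution whenever $b_i\neq k$, so no single-boundary-divisor recursion can close.

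The paper's proof gets around exactly this difficulty with an ingredient your proposal is missing: a polynomial divisibility argument. After expanding via the relation above, one observes that $\int_{\text{MS}^k(\mu)}\xi^{p-2}$ is a polynomial of degree at most $p-2$ in $b_1,\dots,b_p$ (with $a=\sum b_i-2k$); evaluating at $b_i=k$ kills the $\psi$-term and, using the induction on the number of poles applied to the unique surviving boundary graph, yields $f_k(a',p)$, so $(b_i-k)$ divides $\int\xi^{p-2}-f_k(a,p)$ for every $i$; since $\prod_i(b_i-k)$ has degree $p$, the difference vanishes identically. (The case where all entries of $\mu$ are divisible by $k$ is handled separately by the direct comparison $\xi=k\,\xi_{ab}$ with the abelian stratum of signature $\mu/k$ on the sphere itself --- not via a canonical cover, which for general $\mu$ would require a comparison of universal line bundles that you do not supply.) Your base cases $p=2,3$ and the interpretation of the $m=1,2$ conventions are fine, but without the $\psi$-term and the divisibility trick the induction cannot be completed for arbitrary pole orders.
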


\begin{proof}
    If all the orders in $\mu$ are divisible by $k$, then every differential in the stratum is the $k$-th power of an abelian differential. We consider the multi-scale compactification $\text{MS}(\mu/k)$ of abelian differentials with signature $\mu/k=(a/k,-b_1/k,\ldots,-b_p/k)$ and the Chern class $\xi_{ab}=c_1(\mathcal{O}_{\text{Ms}(\mu/k)}(1))$. Since $\xi=k\xi_{ab}$ via pullback, we obtain 
    \begin{eqnarray*}
        \int_{\text{MS}^k(\mu)}\xi^{p-2} & = & k^{p-2}\int_{\text{MS}(\mu/k)}\xi_{ab}^{p-2}\\
        & = & k^{p-2}f_1(a/k,p)\\
        & = & f_k(a,p)\,.
    \end{eqnarray*}
    
    Next, suppose $\mu$ has at least two entries not divisible by $k$. For every marked pole $p_i$, there is a relation of divisor classes: 
    \begin{equation}\label{eqn:xi-pole}
        \xi = (b_i-k)\psi_{p_i} + \sum_{p_i\in \Gamma^{\perp}}t_{\Gamma}D_{\Gamma}\,,
    \end{equation}
    where $\psi_{p_i}$ is the psi-class associated to $p_i$, and the sum runs over all two-level graphs $\Gamma$ in which~$p_i$ lies on the bottom component,  denoted $\Gamma^\perp$. Here, the boundary divisor $D_{\Gamma}$ is the closure of all boundary points whose dual graph is $\Gamma$, and the twisting coefficient $t_{\Gamma}$ is the least common multiple of the prongs at the nodes.
    
    By expanding the left-hand side of Equation $\eqref{eqn:k-factorial}$ using the relation $\eqref{eqn:xi-pole}$, we obtain 
    \begin{equation}\label{eqn:xi-expanded}
        \int_{\text{MS}^k(\mu)}\xi^{p-2} = (b_i-k)\int_{\text{MS}^k(\mu)}\xi^{p-3}\psi_{p_i} + t_{\Gamma_{z,p_i}}\int_{\text{MS}^k(\mu)}\xi^{p-3}D_{\Gamma_{z,p_i}}\,,
    \end{equation}
    where the graph $\Gamma_{z,p_i}$ has the zero $z$ and the pole $p_i$ in the bottom component, with the remaining poles in a single top component. This is the only graph for which the product $\xi^{n-3}D_{\Gamma}$ does not vanish. Its twisting coefficient is $t_{\Gamma_{z,p_i}}=a-b_i+k$, and by induction on the number of poles, we have  $$\int_{\text{MS}^k(\mu)}\xi^{p-3}D_{\Gamma_{z,p_i}}= f_k(a-b_i,p-1)\,.$$ 
    
    We can further expand the product $\int_{\text{MS}^k(\mu)}\xi^{p-3}\psi_{p_i}$ using the relation $\eqref{eqn:xi-pole}$, as in the proof of \cite[Theorem 1.1]{ChPrIso}, to observe that $\int_{\text{MS}^k(\mu)}\xi^{p-2}$ is a polynomial of degree at most $p-2$ in the ring $\mathbb{Z}[b_1,\ldots,b_p]$, by taking $a=\sum_{i=1}^p b_i -2k$ and considering the pole orders as variables. In particular, evaluating at $b_i=k$ gives
    \begin{equation*}
        \int_{\text{MS}^k(\mu)}\xi^{p-2}\Bigg|_{b_i=k} = a'f_k(a'-k,p-1)=f_k(a',p)\,,
    \end{equation*}
    where $a'=a|_{b_i=k}$.
    This implies that $b_i-k$ divides the polynomial $\int_{\text{MS}^k(\mu)}\xi^{n-2}-f_k(a,p)$. Moreover, since any other pole can be used in the same way, the product $\prod_{i=1}^p(b_i-p)$ divides the same polynomial,  
    whose degree is at most $p-2$. Therefore, the polynomial must vanish identically.
\end{proof}
 The function~$f_{k}(a,n)$ satisfies two identities that  will be used in the proof of Theorem~\ref{thm:main}. For the first identity, let $f_k^{(r)}(a,n)$ denote the $r$-th derivative of $f_k(a,n)$ with respect to $a$, with the convention that $f_k^{(r)}(a,n)=0$ whenever $r<0$ or $r>n-2$.
\begin{prop}\label{prop:fk-split} We have 
    \begin{equation*}
    \ f_k(a_2,n+1) = \sum_{r=0}^{n-1}(-(a_1+k))^r\left[\frac{f_k^{(r-1)}(a_1+a_2,n)}{(r-1)!}+(a_1+a_2+k)\frac{f_k^{(r)}(a_1+a_2,n)}{r!}\right]\,. 
\end{equation*}
\end{prop}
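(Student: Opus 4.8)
The plan is to recognize the entire right-hand side as a Taylor expansion in disguise. Assume $n\ge 2$, so that $f_k(\,\cdot\,,n)$ is an honest polynomial in its first argument, of degree $n-2$. Taylor's formula for polynomials then gives, for every shift $h$,
\begin{equation*}
 f_k(a+h,n)=\sum_{r\ge 0}\frac{h^r}{r!}\,f_k^{(r)}(a,n)\,,
\end{equation*}
and this finite sum is exactly compatible with the stated convention: indices $r<0$ never occur, and all the terms with $r>n-2$ vanish since $f_k^{(r)}(a,n)=0$ there. I would apply this identity with $a=a_1+a_2$ and $h=-(a_1+k)$, so that $a+h=a_2-k$.

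With that in hand, I would split the bracket in the statement into its two summands and sum each over $r$ separately. The second summand yields immediately
\begin{equation*}
 (a_1+a_2+k)\sum_{r\ge 0}\frac{(-(a_1+k))^r}{r!}\,f_k^{(r)}(a_1+a_2,n)=(a_1+a_2+k)\,f_k(a_2-k,n)\,.
\end{equation*}
For the first summand, the convention makes the $r=0$ term vanish, so re-indexing by $s=r-1\ge 0$ gives
\begin{equation*}
 \sum_{r\ge 1}\frac{(-(a_1+k))^r}{(r-1)!}\,f_k^{(r-1)}(a_1+a_2,n)=-(a_1+k)\sum_{s\ge 0}\frac{(-(a_1+k))^s}{s!}\,f_k^{(s)}(a_1+a_2,n)=-(a_1+k)\,f_k(a_2-k,n)\,.
\end{equation*}
Adding the two contributions, the right-hand side collapses to $\big[(a_1+a_2+k)-(a_1+k)\big]f_k(a_2-k,n)=a_2\,f_k(a_2-k,n)$.

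It then remains to verify the elementary identity $a_2\,f_k(a_2-k,n)=f_k(a_2,n+1)$ directly from the product formula~\eqref{eq:fk}. For $n\ge 3$ this is the telescoping
\begin{equation*}
 a_2\,f_k(a_2-k,n)=a_2\prod_{0\le j\le n-3}\big(a_2-k(j+1)\big)=\prod_{0\le j\le n-2}(a_2-kj)=f_k(a_2,n+1)\,,
\end{equation*}
and for $n=2$ it reads $a_2\cdot 1=a_2=f_k(a_2,3)$. I do not expect any genuine obstacle here; the one point requiring care is the bookkeeping between the literally finite sums appearing in the proposition and the (formally infinite) Taylor series — one must explicitly invoke $f_k^{(r)}(a,n)=0$ for $r>n-2$ to match the summation ranges, and the vanishing of the $r=0$ term to re-index the first summand without dropping a boundary contribution. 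Once the Taylor structure is spotted, the rest is routine.
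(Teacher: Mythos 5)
Your proof is correct, and its key step is genuinely different from the paper's. Both arguments share the same outer bookkeeping: the $r=0$ term of the first summand and the $r=n-1$ term of the second vanish by the convention on $f_k^{(r)}$, re-indexing the first summand produces the prefactor $-(a_1+k)$, the two contributions combine through $(a_1+a_2+k)-(a_1+k)=a_2$, and both proofs finish with the same telescoping identity $a_2\,f_k(a_2-k,n)=f_k(a_2,n+1)$. The difference lies in how the inner sum $\sum_{r=0}^{n-2}\frac{(-(a_1+k))^r}{r!}f_k^{(r)}(a_1+a_2,n)$ is evaluated. The paper proceeds by induction on $n$: it uses the recursion $f_k(a,n)=a\,f_k(a-k,n-1)$ together with the Leibniz rule to push $f_k^{(r)}(a_1+a_2,n)/r!$ down to level $n-1$, recognizes the resulting expression as the statement for $(n-1,\,a_2-k)$, and concludes that the sum equals $f_k(a_2-k,n)$. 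You instead observe that this sum is exactly Taylor's formula for the degree-$(n-2)$ polynomial $f_k(\cdot,n)$ expanded at $a_1+a_2$ with increment $-(a_1+k)$, which yields $f_k(a_2-k,n)$ in one line; this bypasses the induction entirely, is arguably more transparent, and explains conceptually where the shift $a_2-k$ comes from. The one caveat is the degenerate case $n=1$, where $f_k(\cdot,1)=1/(a+k)$ is not a polynomial and the convention $f_k^{(r)}(a,n)=0$ for $r>n-2$ would annihilate even the $0$-th derivative; you correctly restrict to $n\geq 2$, but since the identity is invoked in the proof of Theorem~\ref{thm:main} with $n=|I'|=1$, that instance should be recorded as a separate one-line check (it holds once $f_k^{(0)}(a,1)$ is read as $1/(a+k)$), a point the paper's induction glosses over as well.
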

\begin{proof} We have 
    \begin{eqnarray*}
     &&\sum_{r=0}^{n-1}(-(a_1+k))^r\left[\frac{f_k^{(r-1)}(a_1+a_2,n)}{(r-1)!}+(a_1+a_2+k)\frac{f_k^{(r)}(a_1+a_2,n)}{r!}\right] \\
     &=&
    a_2\sum_{r=0}^{n-2}(-(a_1+k))^r\frac{f_k^{(r)}(a_1+a_2,n)}{r!}\\
    &=&
    a_2\sum_{r=0}^{n-2}(-(a_1+k))^r\left[\frac{f_k^{(r-1)}(a_1+a_2-k,n-1)}{(r-1)!}+(a_1+a_2)\frac{f_k^{(r)}(a_1+a_2-k,n-1)}{r!}\right]\\
    &=&a_2f_{k}(a_2-k,n) \ \ \text{by induction on }n\\
    &=&f_{k}(a_2,n+1)\, .
    \end{eqnarray*}
\end{proof}

The second identity is a sign-alternating sum of polynomials that equals the zero polynomial. Recall that given a subset $I$ of $\{1,\ldots,p\}$ the number $c_{1,I} = a_1-\sum_{i\in I}b_i + k$  has been introduced in~\eqref{eq:c1I}
\begin{prop}\label{prop:zero-sum}
For any $I \subset \{1,\ldots,p\}$ we have
    \begin{equation*}
     \sum_{I\subset \{1,\ldots,p\}}(-1)^{|I|}f_k\left(c_{1,I}+(|I|-1)k,p+1\right)=0\,.
\end{equation*}
\end{prop}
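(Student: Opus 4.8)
The plan is to recognize the alternating sum as an iterated finite‑difference operator applied to a polynomial of degree $p-1$, and then invoke the elementary fact that a $p$‑fold difference annihilates every such polynomial. First I would put the argument of $f_{k}$ into a cleaner shape. Since $c_{1,I}=a_{1}-\sum_{i\in I}b_{i}+k$ by \eqref{eq:c1I}, one has
\[
 c_{1,I}+(|I|-1)k \;=\; a_{1}-\sum_{i\in I}(b_{i}-k)\,.
\]
Set $\beta_{i}:=b_{i}-k$ and $g(x):=f_{k}(x,p+1)$; the left‑hand side of the proposition then reads
\[
 \sum_{I\subseteq\{1,\ldots,p\}}(-1)^{|I|}\, g\!\biggl(a_{1}-\sum_{i\in I}\beta_{i}\biggr)\,.
\]
Because $p\ge 1$, the second argument $m=p+1$ lands in the branch of \eqref{eq:fk} where $f_{k}(\cdot,m)$ is an honest polynomial in its first variable — either the constant $1$ (when $p=1$) or $\prod_{0\le j\le p-2}(x-kj)$ (when $p\ge 2$) — and in every case $g$ has degree exactly $p-1$.

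Next I would introduce, for each index $i$, the shift‑difference operator $(\Delta_{i}h)(x):=h(x)-h(x-\beta_{i})$ on functions of one variable, and prove by an easy induction on $p$ that
\[
 \bigl(\Delta_{1}\Delta_{2}\cdots\Delta_{p}\,g\bigr)(x) \;=\; \sum_{I\subseteq\{1,\ldots,p\}}(-1)^{|I|}\, g\!\biggl(x-\sum_{i\in I}\beta_{i}\biggr)\,,
\]
since expanding the product of the $p$ binomial factors $h\mapsto h(x)-h(x-\beta_{i})$ produces exactly the $2^{p}$ signed terms indexed by subsets $I$. Thus the quantity in the proposition equals $(\Delta_{1}\cdots\Delta_{p}\,g)(a_{1})$. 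To conclude, I would use the degree‑drop property: each $\Delta_{i}$ sends a polynomial of degree $\le d$ to one of degree $\le d-1$ (the leading term cancels when $\beta_{i}\ne 0$, and $\Delta_{i}$ is identically zero when $\beta_{i}=0$), with the usual convention $\deg 0=-\infty$. Composing $p$ of them sends the degree‑$(p-1)$ polynomial $g$ to a polynomial of degree $\le -1$, i.e. the zero polynomial; evaluating at $a_{1}$ gives $0$, which is the claim.

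There is no serious obstacle here. The only points that require care are the bookkeeping in the reindexing step — checking that the additive constant $(|I|-1)k$ combines with $c_{1,I}$ so as to remove all dependence on $a_{1}$ except through a single shifted evaluation point — and confirming that the relevant branch of the piecewise‑defined $f_{k}$ really is a polynomial of degree $p-1$, so that the finite‑difference vanishing applies. One could equally run the argument as a direct induction on $p$ (removing the index $p$ splits the sum into the $\beta_{p}=0$ part and its $\beta_{p}$‑translate, which is precisely one application of $\Delta_{p}$), but the operator formulation is the most transparent and is the version I would write up.
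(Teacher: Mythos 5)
Your proof is correct, and it takes a genuinely different (though closely related) route from the paper's. The paper regards each summand as an element of $\mathbb{Z}[a_1,b_1,\ldots,b_p]$, observes that specializing $b_i=k$ makes the terms indexed by $I$ and $I\setminus\{i\}$ cancel in pairs, deduces that $\prod_{i=1}^{p}(b_i-k)$ divides the sum, and concludes from the degree bound $p-1$ that the sum vanishes identically as a polynomial. You instead reduce everything to one variable: after the (correct) simplification $c_{1,I}+(|I|-1)k=a_1-\sum_{i\in I}(b_i-k)$, the alternating sum becomes $(\Delta_1\cdots\Delta_p\,g)(a_1)$ for $g=f_k(\cdot,p+1)$, which is indeed a polynomial of degree $p-1$ in its first argument (constant when $p=1$, and $\prod_{0\le j\le p-2}(x-kj)$ when $p\ge 2$), so the $p$-fold difference kills it. The two mechanisms hinge on the same degree count, and your remark that $\Delta_i\equiv 0$ when $\beta_i=b_i-k=0$ is precisely the paper's pairwise cancellation at $b_i=k$; but your version works pointwise for fixed numerical $b_i$ and avoids the multivariate divisibility step, which makes it arguably the cleaner write-up. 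One cosmetic caveat: the degree-drop argument should be phrased for polynomials of degree $\le d$ going to degree $\le d-1$ (with $\deg 0=-\infty$), exactly as you do, since some intermediate differences may vanish identically when several $b_i$ equal $k$.
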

\begin{proof}
    Each polynomial $(-1)^{|I|}f_k(c_{1,I}+(|I|-1)k,p+1)\in \mathbb{Z}[a_1,b_1,\ldots,b_p]$ associated with a set $I\subset\{1,\ldots,p\}$ containing an index $i$ can be canceled with the polynomial associated to the set $I\backslash\{ i\}$ by evaluating at $b_i=k$. Therefore, the product $\prod_{i=1}^{p}(b_i-k)$ divides the sum $$\sum_{I\subset \{1,\ldots,p\}}(-1)^{|I|}f_k(c_{1,I}+(|I|-1)k,p+1)\,,$$ 
    which is a polynomial of degree at most $p-1$, and hence it must vanish identically.
\end{proof}

\subsection{The generic case}
Now we prove Theorem \ref{thm:main} which computes the degree of the isoresidual cover. To do this, we replicate the method of \cite{ChPrIso}, using intersection theory on the multi-scale compactification $\text{MS}^k(\mu)$ recalled in Section~\ref{sec:multi-scale}.
\smallskip
\par
Fix a projectivized stratum $\mathbb{P}\Omega^k(\mu)$ of primitive $k$-differentials on the Riemann sphere, where $\mu=(a_1,a_2,-b_1,\ldots,-b_p)$ is a partition of $-2k$. We denote by $z_i$ the singularity of order $a_i$, for $i=1,2$, and the poles $p_1,\ldots,p_p$, respectively. The pole orders $b_i \geq k$, with each $b_{i}$ divisible by~$k$
and each $a_{i}$ relatively prime to~$k$.
Since the  $b_i$ are positive multiples of~$k$, the residual map~\eqref{eq:resmap} induces a rational map defined by the $k$-residues $\mathcal{R}=[R_1:\cdots:R_p]$ of the $k$-differentials.

\begin{proof}[Proof of Theorem~\ref{thm:main}]
 The multi-scale space $\text{MS}^k(\mu)$ of $k$-differentials is equipped with a universal line bundle $\mathcal{O}_{\text{MS}^k(\mu)}(1)$ whose first Chern class is denoted by $\xi$. To describe the locus of $k$-differentials with a fixed general $k$-residue tuple $\mathcal{R}=[R_1:\cdots:R_p]$, we define the sections
\begin{eqnarray*}
    \phi_i\colon \mathcal{O}_{\mathcal{M}^k(\mu)}(-1) & \xrightarrow{} &\mathbb{C}\\
    \zeta & \mapsto & R_p \cdot {\rm Res}_{p_i}\zeta - R_i \cdot {\rm Res}_{p_p}\zeta\,.
\end{eqnarray*}
The vanishing locus $D_i=V(\phi_i)$ is a divisor whose cycle class is $[D_i]=c_1(\mathcal{O}_{\text{MS}^k(\mu)}(1))=\xi$.  

Since the residue tuple $\mathcal{R}$ is generic, the isoresidual fiber coincides with  $D_1\cap\cdots \cap D_{p-1}$. This intersection is transversal, since the residue equations defining the sections $\phi_i$ are linearly independent, and hence it can be computed as the intersection product $\int_{\text{MS}^k(\mu)}\xi^{p-1}$. Moreover, the intersection lies entirely in the interior of  $\text{MS}^k(\mu)$ by the generality of $\mathcal{R}$. Indeed, if there existed a multi-scale $k$-differential on a nodal curve with residues given by $\mathcal{R}$, then at least one component would carry a $k$-differential $\zeta$ whose singularity orders are all divisible by $k$. This would imply the existence of a global abelian differential $\omega$ such that $\omega^k=\zeta$. Applying the residue theorem to  $\omega$ would then yield the relation $P(\{R_i\}_{i\in I})=0$, where $I$ indexes the poles on that component,  contradicting the generality of $\mathcal{R}$.

Next, to compute the product $\int_{\text{MS}^k(\mu)}\xi^{p-1}$, we use the relation
\begin{equation}\label{eqn:xi-taut}
    \xi = -(a_1+k)\psi_{z_1} + \sum_{z_1\in \Gamma^{\perp}}t_{\Gamma}\delta_{\Gamma}\,,
\end{equation}
where $\psi_{z_1}$ is the psi-class at the marked point $z_1$,  and the sum runs over all two-level graphs $\Gamma$ whose bottom component $\Gamma^\perp$ contains $z_1$. (The argument is symmetric if we choose the marking $z_2$ instead.) Each such two-level graph determines a boundary divisor $\delta_\Gamma$ with an associated twisting coefficient $t_{\Gamma}$, given by the product of the prongs at its nodes. 

Observe that multiplying a boundary divisor $\delta_{\Gamma}$ by  $\psi_{z_1}^r\xi^{p-2-r}$ vanishes unless the bottom component has dimension $r$. This motivates the definition $$
D_{r,I}=\sum_{\Gamma \in \mathcal{G}_{r,I}}t_{\Gamma}\delta_{\Gamma}\,,$$ 
where $\mathcal{G}_{r,I}$ is the collection of two-level graphs such that:
\begin{itemize}
\item the marking $z_1$ lies on a single bottom component $X_0$,
\item the poles indexed by $I$ are distributed among $r+1$ semistable top components $X_1,\ldots,X_{r+1}$, and
\item the marking $z_2$, together with the remaining poles, lies on a single top component $X_{r+2}$. 
\end{itemize}
These graphs satisfy that the top component $X_{r+2}$ is invariant and the prong at its node is $c_{1,I}$. They are represented in Figure~\ref{fig:graphes}. \begin{figure}[htb]
   \centering
\begin{tikzpicture}[scale=1]

\coordinate (a1) at (0,0);\fill (a1) circle (2pt);
\coordinate (a2) at (-.8,1);\fill (a2) circle (2pt);
\coordinate (a4) at (1,1);\fill (a4) circle (2pt);
\coordinate (a3) at (0,1);\fill (a3) circle (2pt);
\draw (a1) -- (a2);
\draw (a1) -- (a3);
\draw (a1) -- (a4);
\draw (a1) --++ (-90:.3) node[below] {$z_{1}$};
\draw (a2) --++ (90:.3) node[above] {$p_{1}$};
\draw (a3) --++ (50:.3)node[above] {$p_{3}$};
\draw (a3) --++ (130:.3)node[above] {$p_{2}$};
\draw (a4) --++ (50:.3)node[above] {$p_{4}$};
\draw (a4) --++ (130:.3)node[above] {$z_{2}$};

 \end{tikzpicture}
\caption{The graph in $\mathcal{G}_{r,I}$ with $I=\lbrace 1 \rbrace \cup \lbrace 2,3 \rbrace$.}
\label{fig:graphes}
\end{figure}
\par

Note that the restriction of $\delta_{\Gamma}$ to $X_{r+2}$ is the same for every $\Gamma \in \mathcal{G}_{r,I}$ and for all $r\geq 0$. By semistability of the top components, each marked point in the bottom component other than $z_1$ can be replaced by a top component containing the marked point. Consequently, the bottom component only contains $z_1$ and $r+2$ nodes. This convention behaves well with our counts as the prong of a semistable component containing the pole $p_i$ would be $b_i-k$ which multiplied by the function $f_k(b_i-2k,1)=1/(b_i-k)$ cancel each other. For the divisor $D_{r,I}$ to be well defined, the $k$-differential on $X_{r+2}$ should have a singularity of order $>-k$ at the node joining $X_{r+2}$ to $X_0$. This condition is equivalent to
\begin{equation*}
    c_{1,I}\coloneqq a_1-\sum_{i\in I}b_i+k>0\,,
\end{equation*}
which defines a wall-crossing inequality. For notational convenience, we will also allow $I=\{1,\ldots,p\}$, which corresponds to the special case where $z_2$ is the only marked point on $X_{r+2}$ (equivalently, $z_2$ lies on the bottom component). Although this case does not satisfy $c_{1,I}>0$, we will justify below why the subsequent computations still apply. Applying relation $\eqref{eqn:xi-taut}$ iteratively, we obtain 
\begin{eqnarray*}
    \int_{\text{MS}^k(\mu)}\xi^{p-1} & = & \sum_{\substack{c_{1,I'}>0 \\ I'\not=\emptyset}}\sum_{r=0}^{|I'|-1}(-(a_1+k))^r\int_{\text{MS}^k(\mu)}\psi_{z_1}^r\xi^{p-2-r}D_{r,I'}\,.
\end{eqnarray*}
To compute each product $$\int_{\text{MS}^k(\mu)}\psi_{z_1}^r\xi^{p-2-r}D_{r,I'}\,,$$ for every two-level graph $\Gamma\in \mathcal{G}_{r,I'}$, 
let $\alpha_i$ denote the zero order on $X_i$ at the node $q_i=X_i\cap X_0$. For each top component $X_i$, we associate  a restricted stratum $\text{MS}^k(\mu_i)$ of $k$-differentials, where $\mu_i=(\alpha_i,\{-b_{j}\}_{p_j\in X_i})$. On the bottom component, the restriction is  $\delta_{\Gamma}|_{X_0}\simeq \overline{\mathcal{M}}_{0,r+3}$. Hence we may decompose $\delta_{\Gamma}\simeq \mathbb{P}\mathcal{E}\times \overline{\mathcal{M}}_{0,r+3}$, where $\mathcal{E}=\eta_1\oplus\cdots\oplus\eta_{r+2}$ and $\eta_i$ denotes the tautological line bundle class over  $\text{MS}^k(\mu_i)$. Recall that the Segre class is given by 
\begin{equation*}
    s(\mathcal{E}) =\prod_{i=1}^{r+2}(1+\xi_i+\xi_i^2+\cdots+\xi_i^{d_i})\,,
\end{equation*}
where 
$d_i=\dim \text{MS}^k_i(\mu_i)$ for $i=1,\ldots,r+2$. Let $n_i$ denote the number of marked poles on $X_i$. Then $d_i=n_i-2$ for $i=1,\ldots,r+1$, while $d_{r+2}=n_{r+2}-1=p-1-|I'|$. Since
    $\sum_{i=1}^{r+2} d_i = p+1-2(r+2)$, and $\text{rank} \ \mathbb{P}\mathcal{E}=r+1$ over $\text{MS}^k(\mu_1)\times\cdots \times \text{MS}^k(\mu_{r+2})$, we obtain 
\begin{eqnarray*}
    \int_{\mathbb{P}\mathcal{E}} \xi^{p-2-r}&=&s_{p-3-2r}(\mathcal{E})\\
    &=&\prod_{i=1}^{r+2} \int_{\text{MS}^k(\mu_i)}\xi_i^{d_i}\\    &=&\left[\int_{\text{MS}^k(\mu_{r+2})}\xi_{r+2}^{d_{r+2}}\right]\prod_{i=1}^{r+1}f_k(\alpha_i,n_i) \ \  \text{by Proposition \ref{prop:k-minimal}}\,.
\end{eqnarray*}
By the induction hypothesis on $\text{MS}^k(\mu_{r+2})$, since  $\alpha_{r+2}=c_{1,I'}-k$ and $n_{r+2}=p-|I'|$, we have 
\begin{equation*}  \int_{\text{MS}^k(\mu_{r+2})}\xi_{r+2}^{d_{r+2}} = \sum_{c_{1,I'\sqcup I''}>0} c_{1,I'\sqcup I''}f_k(c_{1,I'}-k,|I''|+1)f_k(a_2,p+1-|I'\sqcup I''|)\,.
\end{equation*}
This expression is the same for every $\Gamma \in \mathcal{G}_{r,I'}$ and for all $r\geq 0$. Recall that each $\Gamma \in \mathcal{G}_{r,I'}$ carries a twisting coefficient $t_{\Gamma}=\prod_{i=1}^{r+2} t_i$, where $t_i=\alpha_i+k$ is the prong at the node $q_i$. In particular, $t_{r+2}=c_{1,I'}$, and we set $t'_{\Gamma}=\prod_{i=1}^{r+1} t_i$. Thus $t_{\Gamma}=c_{1,I'}t'_{\Gamma}$. Putting everything together, we obtain
\begin{eqnarray*}    \int_{\text{MS}^k(\mu)}\psi_{z_1}^r\xi^{p-2-r}D_{r,I'} & = & \sum_{\Gamma \in \mathcal{G}_{r,I'}}c_{1,I'}t'_{\Gamma}\int_{\text{MS}^k(\mu)}\psi_{z_1}^r\xi^{p-2-r}\delta_{\Gamma}\\
&=& \sum_{\Gamma \in \mathcal{G}_{r,I'}}c_{1,I'}t'_{\Gamma}\int_{\overline{\mathcal{M}}_{0,r+3}}\psi_{z_1}^r\int_{\mathbb{P}(\mathcal{E})}\xi^{p-2-r}\\
&=& \left[\int_{\text{MS}^k(\mu_{r+2})}\xi_{r+2}^{d_{r+2}}\right]\sum_{\Gamma \in \mathcal{G}_{r,I'}}t'_{\Gamma}\prod_{i=1}^{r+1}f_k(\alpha_i,n_i)\,.
\end{eqnarray*}
Now, consider the minimal stratum $\text{MS}^k(\mu')$ with $\mu'=(B_{I'}-2k,\{-b_i\}_{i\in I'})$, where $B_{I'}=\sum\limits_{i\in I'}b_i$. Let~$z$ be the single zero and $\xi=c_1(\mathcal{O}_{\text{MS}^k(\mu')}(1))$. Using the relation \eqref{eqn:xi-taut} with respect to the zero $z$, we obtain
\begin{equation*}    \int_{\text{MS}^k(\mu')}\psi_z^{r}\xi^{|I'|-2-r} = -(B_{I'}-k)\int_{\text{MS}^k(\mu')}\psi_z^{r+1}\xi^{|I'|-3-r} + \sum_{\Gamma \in \mathcal{G}^{min}_{r,I'}}t_{\Gamma}\int_{\text{MS}^k(\mu')}\psi_z^{r}\xi^{|I'|-3-r}\delta_{\Gamma}\,,
\end{equation*}
where $\mathcal{G}^{min}_{r,I'}$ runs over all two-level graphs whose bottom component contains a single zero $z$ of order $a_{I'}=a_1+a_2-B_{I'}$, and the top level consists of $r+1$ semistable top components. 

Note that we can ``minimize'' a graph $\Gamma\in \mathcal{G}_{r,I'}$ to $\Gamma^{\min} \in \mathcal{G}^{min}_{r,I'}$ by collapsing the top component $X_{r+2}$ and the zero $z_1$ into a single zero $z$ in the bottom component of $\Gamma^{min}$. This process is bijective. Let $\text{MS}(\mu'/k)$ be the stratum of abelian differentials with $\mu'/k=(B_{I'}/k-2,\{-b_i/k\}_{i\in I'})$. Then we can compute
\begin{eqnarray*}
    \sum_{\Gamma \in \mathcal{G}_{r,I'}}t'_{\Gamma}\prod_{i=1}^{r+1}f_k(\alpha_i,n_i) & = & \sum_{\Gamma \in \mathcal{G}^{min}_{r,I'}}t_{\Gamma}\int_{\text{MS}^k(\mu')}\psi_z^{r-1}\xi^{|I'|-2-r}\delta_{\Gamma}\\
    &=& \int_{\text{MS}^k(\mu')}\psi_z^{r-1}\xi^{|I'|-1-r} + (B_{I'}-k)\int_{\text{MS}^k(\mu')}\psi_z^{r}\xi^{|I'|-2-r}\\
    &=& k^{|I'|-2}\left[\int_{\mathcal{M}(\mu'/k)}\psi_z^{r-1}\xi^{|I'|-1-r} + (B_{I'}/k-1)\int_{\mathcal{M}(\mu'/k)}\psi_z^{r}\xi^{|I'|-2-r}\right]\\
    &=& k^{|I'|-2}\left[\frac{f^{(r-1)}(B_{I'}/k-2,|I'|)}{(r-1)!}+(B_{I'}/k-1)\frac{f^{(r)}(B_{I'}/k-2,|I'|)}{r!}\right] \text{ by \cite[Theorem 6.1]{ChPrIso}}\\
    &=& \frac{f_k^{(r-1)}(B_{I'}-2k,|I'|)}{(r-1)!}+(B_{I'}-k)\frac{f_k^{(r)}(B_{I'}-2k,|I'|)}{r!}\,.
\end{eqnarray*}
Recall that $f_k^{(r)}(a,n)$ denotes the $r$-th derivative of $f_k(a,n)$ with respect to $a$. We then obtain 
\begin{eqnarray*}
    \int_{\text{MS}^k(\mu)}\xi^{p-1} 
    & = & \sum_{\substack{c_{1,I'}>0 \\ I'\not=\emptyset}}c_{1,I'}\left[ \sum_{c_{1,I'\sqcup I''}>0} c_{1,I'\sqcup I''}f_k(c_{1,I'}-k,|I''|+1)f_k(a_2,p+1-|I'\sqcup I''|)\right]\\
    &  & \ \  \ \ \ \ \ \ \ \ \ \ \ \ \  \cdot \sum_{r=0}^{|I'|-1}(-(a_1+k))^r\left[\frac{f_k^{(r-1)}(B_{I'}-2k,|I'|)}{(r-1)!}+(B_{I'}-k)\frac{f_k^{(r)}(B_{I'}-2k,|I'|)}{r!}\right]\hfill\\
    & = & \sum_{\substack{c_{1,I}>0 \\ I=I'\sqcup I''}} c_{1,I}f(a_2,p+1-|I|) \left[\sum_{\substack{I'\subset I \\ I'\not= \emptyset}}f_k(c_{1,I'},|I|-|I'|+2)f_k(B_{I'}-a_1-2k,|I'|+1)\right] \hfill \text{by \eqref{prop:fk-split}}\\
    & = & \sum_{c_{1,I}>0} c_{1,I}f_k(a_2,p+1-|I|) \left[\sum_{\substack{I'\subset I \\ I'\not= \emptyset}}(-1)^{|I'|-1}f_k(c_{1,I'},|I|-|I'|+2)f_k(c_{1,|I'|}+(|I'|-1)k,|I'|+1)\right]\\
    & = & \sum_{c_{1,I}>0} c_{1,I}f_k(a_2,p+1-|I|) \left[\sum_{\substack{I'\subset I \\ I'\not= \emptyset}}(-1)^{|I'|-1}f_k(c_{1,I'}+(|I'|-1)k,|I|+1)\right]\\
    & = & \sum_{c_{1,I}>0} c_{1,I}f_k(a_2,p+1-|I|) \left[f_k(a_1,|I|+1)-\sum_{I'\subset I}(-1)^{|I'|-1}f_k(c_{1,I'}+(|I'|-1)k,|I|+1)\right] \hfill \text{by (\ref{prop:zero-sum}})\\
    & = & \sum_{c_{1,I}>0} c_{1,I}f_k(a_1,|I|+1)f_k(a_2,p+1-|I|)\,.
\end{eqnarray*}
\end{proof}
Note that as mentioned at the beginning of the proof, the same procedure can be repeated using the relation \eqref{eqn:xi-taut} with respect to the zero $z_2$. In that case one obtain
$$\sum_{c_{2,I}>0} c_{2,I}f_k(a_2,|I|+1)f_k(a_1,p+1-|I|)\,.$$

We can now prove the special case in which all pole orders are equal to $k$, as stated in Corollary~\ref{cor:polesk}. 

\begin{proof}[Proof of Corollary~\ref{cor:polesk}]
Let $d(a_{1},a_{2})$ denote the degree of the isoresidual cover of $\Omega^{k}\mathcal{M}_{0}(a_{1},a_{2},\rec[-k][p])$. In the formula proved in Theorem~\ref{thm:main}, for any subset $I\subset \{1,\ldots,p\}$ we have $c_{1,I}=a_{1}-|I|\cdot k + k$. In particular, $c_{1,I}>0$ if and only if $|I| \leq \ell_{1}$, where $\ell_{i}=\lceil a_{i}/k \rceil$. Summing together all terms corresponding to subsets of the same cardinality, we obtain
$$
d(a_{1},a_{2})
=
\sum\limits_{i=0}^{\ell_{1}}
\binom{p}{i}
\cdot
(a_{1}-(i-1)k)
\cdot
f_{k}(a_{1},i+1)
\cdot
f_{k}(a_{2},p-i+1)\,.
$$
Observe that 
$$(a_{1}-(i-1)k)
\cdot
f_{k}(a_{1},i+1)=f_{k}(a_{1},i+2)\,,$$ 
so the previous formula simplifies to 
$$
d(a_{1},a_{2})
=
\sum\limits_{i=0}^{\ell_{1}}
\binom{p}{i}
\cdot
f_{k}(a_{1},i+2)
\cdot
f_{k}(a_{2},p-i+1)\,.
$$
Since $f_{k}(a_{2},p+1) = a_{2}!_{(k)}$, this gives
\begin{eqnarray*}
 d(a_{1},a_{2}) &=& \sum\limits_{i=0}^{\ell_{1}}
\binom{p}{i}
\cdot
\frac{a_{1}!_{(k)}}{a_{1}\dots (a_{1}-(\ell_{1}+i-1)k)}
\cdot
a_{2}!_{(k)} \prod_{j=0}^{i} a_{2}-(\ell_{2}+j)k \\
&=&  a_{1}!_{(k)} a_{2}!_{(k)}\sum\limits_{i=0}^{\ell_{1}}
\binom{p}{i}
\cdot
\frac{\prod_{j=0}^{i} a_{2}-(\ell_{2}+j)k}{a_{1}\dots (a_{1}-(\ell_{1}+i-1)k)}  \\
&=& a_{1}!_{(k)}a_{2}!_{(k)} \sum\limits_{i=0}^{\ell_{1}}\left((-1)^{p-i}\binom{p}{i} \right)\\
&=& a_{1}!_{(k)}a_{2}!_{(k)} \binom{p-1}{\ell_{1}}\,,
\end{eqnarray*}
where the passage from the second line to the third uses the relation $$a_{1}-(\ell_{1}-j-1)k = -(a_{2}-(\ell_{2}+j)k)\,,$$ 
and the final equality follows from a telescoping sum.
\end{proof}

\subsection{The resonant case}
Now we consider the case of $k$-residues lying in a  resonance hyperplane. This hyperplane is defined by a $resonant$ subset $I\subset \{1,\ldots,p\}$, indexing $k$-residues such that $P(\{R_i\}_{i\in I_1})=0$. Recall that the notation~\eqref{eq:c1I} is $c_{i,I}=a_i + k -\sum\limits_{j\in I}b_j$. The following proposition recalls Theorem~\ref{thm:one-resonant}.

\begin{prop}\label{prop:one-psv}
    The cardinality of the isoresidual fiber over a $k$-residue tuple $\mathcal{R}=[R_1:\cdots:R_p]$ that satisfies exactly one resonance hyperplane defined by a subset $I\subseteq\{1,\ldots,p\}$ is a piecewise polynomial of degree $p-1$, given by  
    \begin{eqnarray*}
        d_{k}(\mu)- f_{I}\cdot \Ab_{\mathcal{R}}(I)\,,&  \ \ if \ \ I = \{1,\ldots,p\}\,,\\
        d_{k}(\mu)- \left(c_{1,I_{>0}}d_k(\mu_{I,\emptyset})+c_{2,I_{>0}}d_k(\mu_{\emptyset, I})\right)f_I \cdot \Ab_{\mathcal{R}}(I)\,, & \ \ if \ \ I\subsetneq \{1,\ldots,p\}\,, 
    \end{eqnarray*}
where 
\begin{itemize}
\item $f_I=f(B_I/k-1,|I|+1)$ with $B_I= \sum_{i\in I}b_{i}$, 
\item the reduced orders are $\mu_{\emptyset,I}=(a_1,c_{2,I}-k,\{-b_i\}_{i\in I^c})$ and $\mu_{I,\emptyset}=(c_{1,I}-k,a_2,\{-b_i\}_{i\in I^c})$, 
\item the notation $c_{i,I_{>0}}$ indicates that the coefficient is included only if $c_{i,I}>0$, and 
\item the function $\Ab_{\mathcal{R}}(I)$ is defined as 
\begin{equation*}
       \Ab_{\mathcal{R}}(I)=\#\left\{r_I=\{r_i\}_{i\in I} \ \Bigg| \ \sum_{i\in I}r_i=0, \  r_i^k=R_i  \ \forall i\in I\right\}\Bigg/\mathbb{C}^*\,.
   \end{equation*}
   \end{itemize}
\end{prop}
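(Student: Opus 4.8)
The plan is to run the intersection-theoretic computation of Theorem~\ref{thm:main} on $\text{MS}^k(\mu)$, but now keeping track of the part of the isoresidual fiber that leaves the interior once the $k$-residue tuple is specialized onto the resonance hyperplane cut out by $P(\{R_i\}_{i\in I})=0$. Fix a lift $\lambda=(\lambda_1,\dots,\lambda_p)$ of $\mathcal{R}$ with $\lambda_i^k=R_i$ and let $\overline{\mathcal{F}}_{\lambda}\subset\text{MS}^k(\mu)$ be the closure of the corresponding isoresidual fiber. Recall from the proof of Theorem~\ref{thm:main} that the fiber over a tuple is the interior part of the intersection $D_1\cap\cdots\cap D_{p-1}$ of the divisors $D_i=V(\phi_i)$, each of class $\xi$, and that $\int_{\text{MS}^k(\mu)}\xi^{p-1}=d_k(\mu)$. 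In the generic case the whole of this intersection (away from the residueless locus) is interior; the only obstruction to that, as spelled out there, is the existence of a boundary multi-scale $k$-differential one of whose components is of abelian type and carries a subset $J$ of poles with $P(\{R_i\}_{i\in J})=0$. Since $\mathcal{R}$ satisfies \emph{only} the resonance of $I$, this forces $J=I$. Therefore
\[
\#(\text{isoresidual fiber over }\mathcal{R})\;=\;d_k(\mu)\;-\;\bigl(\text{contribution of a boundary locus }Z_I\bigr),
\]
where $Z_I$ is the locus of boundary multi-scale $k$-differentials with residue tuple $\mathcal{R}$ whose poles $\{p_i\}_{i\in I}$ all lie on one abelian-type component; by Corollary~\ref{cor:defplate} it is enough to analyse $Z_I$ at a convenient point of the resonance stratum of $\mathcal{R}$.

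The next step is to describe $Z_I$ using Theorem~\ref{thm:closureMSD} and Corollaries~\ref{cor:reszero} and~\ref{cor:reszeros}: these force all nonzero residues of a point of $\overline{\mathcal{F}}_{\lambda}$ onto its bottom level, and force the $k$-th roots there to satisfy exactly the relation indexed by $I$. Since the dual graph of a genus-zero stable curve is a tree, $Z_I$ is the union of the boundary divisors $D_{\Gamma_I}$ (one divisor if $I=\{1,\dots,p\}$, and one for each of the two special zeros $z_1,z_2$ if $I\subsetneq\{1,\dots,p\}$) in which the component $X_I$ carrying $\{p_i\}_{i\in I}$ is of abelian type and is joined by a single node to the component carrying $z_1,z_2$ and $\{p_i\}_{i\in I^c}$; the node has order $B_I-2k$ on $X_I$ and $-B_I$ on the other side, hence prong (twisting) coefficient $B_I-k$, and on $X_I$ one must choose $k$-th roots $(r_i)_{i\in I}$ of $(R_i)_{i\in I}$ with $\sum_{i\in I}r_i=0$, of which, up to simultaneous scaling, there are exactly $\Ab_{\mathcal{R}}(I)$; this number is only locally constant on the $I$-resonance stratum, which accounts for the piecewise polynomial answer. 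The case $I=\{1,\dots,p\}$ is the degeneration in which the vanishing saddle connection joins $z_1$ to $z_2$ and $X_I$ carries all of the poles.

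Then I would evaluate the contribution of $Z_I$ by the same bottom/top factorization used for Theorem~\ref{thm:main}. Over a fixed admissible root configuration on $X_I$, the $\xi$-integral on $X_I$, twisted by the node prong $B_I-k$, equals $f_I=f_k(B_I-k,|I|+1)=(B_I-k)\,f_k(B_I-2k,|I|)$, where $f_k(B_I-2k,|I|)$ is the relevant top $\xi$-intersection number on the minimal stratum with poles $\{p_i\}_{i\in I}$ by Proposition~\ref{prop:k-minimal}; summing over the $\Ab_{\mathcal{R}}(I)$ admissible configurations produces the factor $f_I\,\Ab_{\mathcal{R}}(I)$. On the complementary component I would expand $\xi$ with the tautological relation~\eqref{eqn:xi-taut} relative to $z_1$ (respectively $z_2$): placing $z_1$ (respectively $z_2$) and the node on a common lower level produces a two-level graph whose top component is the stratum $\Omega^k\mathcal{M}_0(\mu_{I,\emptyset})$ (respectively $\Omega^k\mathcal{M}_0(\mu_{\emptyset,I})$) and whose node carries twisting coefficient $c_{1,I}$ (respectively $c_{2,I}$), leaving the integral $d_k(\mu_{I,\emptyset})$ (respectively $d_k(\mu_{\emptyset,I})$) by Theorem~\ref{thm:main} applied recursively; the wall-crossing condition for that stratum to be nonempty is precisely $c_{1,I}>0$ (respectively $c_{2,I}>0$), which is why a term survives only when the corresponding $c_{i,I}$ is positive — and for $I\subsetneq\{1,\dots,p\}$ at least one does, since $c_{1,I}+c_{2,I}=B_{I^c}>0$. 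For $I=\{1,\dots,p\}$ the complementary component carries only three special points, hence no moduli, and its factor is $1$. Multiplying and summing gives the correction $\bigl[c_{1,I_{>0}}d_k(\mu_{I,\emptyset})+c_{2,I_{>0}}d_k(\mu_{\emptyset,I})\bigr]f_I\,\Ab_{\mathcal{R}}(I)$ (respectively $f_I\,\Ab_{\mathcal{R}}(I)$), and degree $p-1$ in the $b_i$ follows from the corresponding property of $d_k$ and of $f_k$.

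I expect the main obstacle to lie in the second step: proving that $Z_I$ is \emph{exactly} this union of boundary divisors, with the right multiplicity, and that nothing deeper contributes. Two points need care. First, by Corollary~\ref{cor:reszeros} the residueless locus always lies in $\overline{\mathcal{F}}_{\lambda}$, so $D_1\cap\cdots\cap D_{p-1}$ need not be zero-dimensional; one must argue, as in \cite{ChPrIso}, that this locus — and more generally every boundary stratum other than the $D_{\Gamma_I}$ above — contributes $0$ to $\int_{\text{MS}^k(\mu)}\xi^{p-1}$, using that the global $k$-residue condition fails on every all-order-divisible-by-$k$ component other than $X_I$. Second, one must verify transversality of $\overline{\mathcal{F}}_{\lambda}$ along $D_{\Gamma_I}$ and that distinct root configurations produce distinct, non-isomorphic boundary points, so that the multiplicity absorbed is exactly $\Ab_{\mathcal{R}}(I)$.
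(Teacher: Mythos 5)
Your overall strategy is the same as the paper's: write the cardinality as $d_k(\mu)$ minus a boundary correction, localize that correction on the two-level graphs whose abelian-type top component carries exactly the poles indexed by $I$ (the graphs with $z_1$, respectively $z_2$, at the bottom when $I\subsetneq\{1,\dots,p\}$, and the single graph with both $z_1,z_2$ at the bottom when $I=\{1,\dots,p\}$), and evaluate each graph by a top/bottom factorization in which the complementary component contributes $c_{i,I}\,d_k(\mu_{I,\emptyset})$, respectively $c_{i,I}\,d_k(\mu_{\emptyset,I})$, subject to $c_{i,I}>0$. The paper packages the bookkeeping through a chain $\mathcal{M}_0\supset\cdots\supset\mathcal{M}_{p-1}$ and corrected divisor classes $D_i=\xi-\sum_{\Gamma}t_\Gamma D_\Gamma$, which is exactly what absorbs your worry that ``nothing deeper contributes''; the geometric content is otherwise identical.

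There is, however, one concrete error: the normalization of the factor attached to the abelian-type component $X_I$. You take the twist at its node to be the $k$-prong $B_I-k$ and the per-root-configuration count to be $f_k(B_I-2k,|I|)$, arriving at $f_k(B_I-k,|I|+1)$. But $f_k(B_I-2k,|I|)=\int_{\MS(\mu_I)}\xi^{|I|-2}=k^{|I|-2}f(B_I/k-2,|I|)$ is the \emph{full} top self-intersection of $\xi$ on that minimal stratum: the factor $k^{|I|-2}$ coming from $\xi=k\xi_{ab}$ is precisely what aggregates over the choices of $k$-th roots, so for a \emph{fixed} admissible configuration $(r_i)_{i\in I}$ with $\sum_{i\in I}r_i=0$ the count on $X_I$ is the abelian number $f(B_I/k-2,|I|)$, and the relevant twist is the abelian prong $B_I/k-1$. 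The correct factor is therefore $f(B_I/k-1,|I|+1)=f_k(B_I-k,|I|+1)/k^{|I|-1}$, as in the statement of Proposition~\ref{prop:one-psv}. The worked examples decide this: for $k=3$, $\mu=(4,-1,-3,-3,-3)$, $\mathcal{R}=[1:1:1]$ one has $d_3(\mu)=4$, $\Ab_{\mathcal{R}}(I)=2$, and the fiber is $4-2\cdot 2=0$, whereas your normalization gives $4-18\cdot 2<0$; Example~\ref{ex:451444} fails similarly. (Theorem~\ref{thm:one-resonant} in the introduction shares your normalization, so the source is ambiguous, but the examples single out the one used in the proposition.) A second, harmless, slip: your claim that for $I\subsetneq\{1,\dots,p\}$ at least one of $c_{1,I},c_{2,I}$ is positive is false, since $c_{1,I}+c_{2,I}=B_{I^c}-B_I$ rather than $B_{I^c}$; when both are non-positive the correction term is simply zero and the formula still holds.
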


Before giving the proof of this result we illustrate the fact that $\Ab_{\mathcal{R}}(I)$ may be $>1$.
\begin{ex}\label{ex:abnumber}
In the case of $k=3$, $\mu=(4,-1;-3,-3,-3)$, and $\mathcal{R}=[1:1:1]$, the residue tuple $\mathcal{R}$ lies over the resonance subset $I=\{1,2,3\}$, but the corresponding abelian residue tuples are $r_{I,1}=[1:\zeta:\zeta^2]$ and $r_{I,2}=[1:\zeta^2:\zeta]$, where $\zeta^3=1$ is a non-trivial cubic root of unity. Hence  in that case $\Ab_{\mathcal{R}}(I)=2$.
\end{ex}

\begin{proof}
Assume, without loss of generality, that $R_p\not=0$. We can construct a chain of spaces $\mathcal{M}_0 \supset \mathcal{M}_1\supset \cdots \supset \mathcal{M}_{p-1}$, where $\mathcal{M}_0=\text{MS}^k(\mu)$ is the full multi-scale space, and    for $i=1,\ldots,p-1$ the space $\mathcal{M}_i$ is the closure of the interior points of $\mathcal{M}_{i-1}$ vanishing on the section
    \begin{eqnarray*}
        f_i\colon\mathcal{O}_{\mathcal{M}_0}(-1) & \rightarrow & \mathbb{C} \\
        \zeta & \mapsto & R_p {\rm Res}_{p_i}\zeta - R_i {\rm Res}_{p_p}\zeta\,.
    \end{eqnarray*}
 In particular, $\mathcal{M}_i$ fixes the first $i$ residues with respect to $R_p$ up to scaling. The class $[\mathcal{M}_{i}]$ is then obtained by intersecting the class of the previous space $[\mathcal{M}_{i-1}]$ with the divisor class
    \begin{equation*}
        D_i = \xi - \sum_{\Gamma \in \mathcal{G}_{i}}t_{\Gamma}D_{\Gamma}\,,
    \end{equation*}
    where $\xi=c_1(\mathcal{O}_{\text{MS}(\mu)}(1))$, and $\mathcal{G}_i$ parameterizes all two-level graphs where the $i$-th residue is fixed by the previous $i-1$ residues together with the $k$-GRC of the graph. 
    
    Assume, without loss of generality, that the pole index $p-1 \in I$. Since there is only one resonance subset $I$, we have $D_i=\xi$ for $i=1,\ldots,p-2$. Therefore, the cardinality of the concerned isoresidual fiber is given by the degree
   \begin{equation*}
       \deg \mathcal{M}_{p-1} = \int_{\mathcal{M}_{p-2} }\xi-\sum_{\Gamma \in \mathcal{G}_{p-1}}t_{\Gamma}\int_{\mathcal{M}_{p-2} }\delta_{\Gamma}\,.
   \end{equation*}
  On the right-hand side, we know that
   \begin{eqnarray*}
       \int_{\mathcal{M}_{p-2} }\xi & = & \int_{\text{MS}^k(\mu)}\xi^{p-1} = d_k(\mu)\\
       \int_{\mathcal{M}_{p-2} }\delta_{\Gamma} & = & \int_{\text{MS}^k(\mu)}\xi^{p-2}\delta_{\Gamma} \,.
   \end{eqnarray*}
   Moreover, the only two-level graphs in $\mathcal{G}_{p-1}$ whose boundary divisor $\delta_{\Gamma}$ does not vanish when multiplied by $\xi^{p-2}$ are $\Gamma_{1,I}$ and $\Gamma_{2,I}$, where $\Gamma_{i,I}$ has the marking $z_i$ in the bottom component, the poles indexed by $I$ in a top component, and the remaining markings in a second top component. Note that such a graph only appears if $c_{i,I}>0$. In the case $I=\{1,\ldots,p\}$, the only non-vanishing two-level graph  is $\Gamma_{z_1,z_2}$, which contains  the markings $z_1,z_2$ in the bottom component and all $p$ poles in a single top component. 
   
   For any two-level $\Gamma$ of these types, the boundary divisor $\delta_{\Gamma}$ restricted to the top component containing the poles indexed by $I$ is isomorphic to $\text{MS}^k(\mu_I)$, with $\mu_I=(B_I-2k,\{-b_i\}_{i\in I})$. Since all these orders are divisible by $k$, every $k$-differential in this top component is the $k$-th power of an abelian differential with residues $r_I=\{r_i\}_{i\in I}$ satisfying $\sum_{i\in I}r_i=0$ and $r_i^k=R_i$ for all $i\in I$. The prong number at the node of this top component is then $B_I/k-1$.
   
   By \cite{GeTaIso} and \cite{ChPrIso}, the cardinality of the isoresidual fiber in the abelian multi-scale space $\text{MS}(\mu_I/k)$ over a generic residue tuple $r_I$ is given by
   \begin{equation*}            \int_{\text{MS}(\mu_I/k)}\xi_{ab}^{|I|-2} = f(B_I/k-2,|I|)\,,
   \end{equation*}
   where $\xi_{ab}=c_1(\mathcal{O}_{\text{MS}(\mu_I/k)}(1))$ and $\mu_I/k$ denotes $\mu_I$ with all orders divided by $k$. However,  since the corresponding residue tuple $r_I$ may not be unique  up to scaling (see Example~\ref{ex:abnumber}) we obtain
   \begin{equation*}            \int_{\text{MS}^k(\mu_I)}\xi^{|I|-2} = \Ab_{\mathcal{R}}(I)\int_{\text{MS}(\mu_I/k)}\xi_{ab}^{|I|-2}\,.
   \end{equation*}
   For $I\subsetneq \{1,\ldots,p\}$, we have 
   \begin{eqnarray*}
   t_{\Gamma_{1,I}}\int_{\text{MS}^k(\mu)}\xi^{p-2}\delta_{\Gamma_{1,I}} & = &  (B_{I}/k-1)c_{1,I}f(B_{I}/k-2,|I|)d_k(\mu_{I,\emptyset})\Ab_{\mathcal{R}}(I)\,,\\
    t_{\Gamma_{2,I}}\int_{\text{MS}^k(\mu)}\xi^{p-2}\delta_{\Gamma_{2,I}} & = &  (B_{I}/k-1)c_{2,I}f(B_{I}/k-2,|I|)d_k(\mu_{\emptyset,I})\Ab_{\mathcal{R}}(I)\,.
 \end{eqnarray*}
For $I=\{1,\ldots,p\}$, we have 
   \begin{equation*}
     t_{\Gamma_{z_1,z_2}}\int_{\text{MS}^k(\mu)}\xi^{p-2}\delta_{\Gamma_{z_1,z_2}}  =  ((a_1+a_2)/k+1)f((a_1+a_2)/k,p)\Ab_{\mathcal{R}}(I)\,.
   \end{equation*}
   Combining the above, the desired conclusion follows.
\end{proof}

Finally, we prove the most general case stated in Theorem~\ref{thm:iso-arbitrary} recalled in the following proposition.
\begin{prop}
    The cardinality of the isoresidual fiber over an arbitrary tuple of residues $\mathcal{R}=[R_1:\cdots:R_p]$ is
    \begin{equation*}
        \sum_{J_0\sqcup J_1\sqcup \cdots \sqcup J_s}(-1)^sG_k(J_0;J_1,\ldots,J_s)\prod_{j=1}^{s_i}f_{J_j}\Ab_{\mathcal{R}}(J_j)\,,
    \end{equation*}
    where the sum runs over all partitions $J_0 \sqcup \cdots \sqcup J_s=\{1,\ldots,p\}$ such that $J_i$ is resonant for all $i=1,\ldots,s$, the remaining subset $J_0$ is possibly empty, and $J'_1,\ldots,J'_{s'}$ is an irreducible resonant subpartition of $J_1,\ldots,J_s$. 
    The functions appearing above are defined as 
    \begin{eqnarray*}
        G_k(J_0;J_1,\ldots,J_s) & = & \begin{cases} 
      \sum_{d_{1,I}>0 } d_{1,I}(a_1+k)^{|I|-1}(a_2+k)^{|I^c|-1}\,, & \text{if} \ J_0=\emptyset\,, \\
      
      \sum_{\substack{d_{1,I}>0 \\ d_{2,I^c}>0}} d_{1,I}d_{2,I^c}d_k(\mu_{I,I^c})(a_1+k)^{|I|-1}(a_2+k)^{|I^c|-1}\,, & \text{if} \ J_0\not=\emptyset\,, 
   \end{cases} \\
        \mu_{I,I^c} & = & (d_{1,I}-k,d_{2,I^c}-k,\{-b_i\}_{i\in J_0})\,,\\
        d_{i,I} & = & c_{i, \ \sqcup_{j\in I} J_{j}}, \ \ \text{for} \ \   i=1,2 \ \  \text{and} \ \  I\subseteq\{1,\ldots,s\}\,.
    \end{eqnarray*}
\end{prop}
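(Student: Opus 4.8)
The plan is to extend the iterative intersection-theoretic computation used in the proof of Proposition~\ref{prop:one-psv} from one resonance hyperplane to an arbitrary residue tuple. Assuming $R_p\neq 0$, I would again form a chain of subvarieties $\text{MS}^k(\mu)=\mathcal{M}_0\supset\mathcal{M}_1\supset\cdots\supset\mathcal{M}_{p-1}$, where $\mathcal{M}_i$ is the closure of the interior locus of $\mathcal{M}_{i-1}$ cut out by the section $\zeta\mapsto R_p\,\Res_{p_i}\zeta-R_i\,\Res_{p_p}\zeta$, so that $[\mathcal{M}_i]=[\mathcal{M}_{i-1}]\cdot D_i$ with $D_i=\xi-\sum_{\Gamma\in\mathcal{G}_i}t_\Gamma D_\Gamma$; here $\mathcal{G}_i$ ranges over the two-level graphs on which the $\mathcal{E}^k_{\mathcal{R}}$-GRC of Theorem~\ref{thm:closureMSD} forces the $i$-th residue relation to hold identically on a subcurve. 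The cardinality of the isoresidual fiber over $\mathcal{R}$ is then $\deg\mathcal{M}_{p-1}=\int_{\text{MS}^k(\mu)}\prod_{i=1}^{p-1}D_i$, and the whole proof consists in expanding this product and organizing the result combinatorially.

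Expanding, $\int_{\text{MS}^k(\mu)}\prod_{i=1}^{p-1}D_i=\sum_{S\subseteq\{1,\ldots,p-1\}}(-1)^{|S|}\int_{\text{MS}^k(\mu)}\xi^{p-1-|S|}\prod_{i\in S}\bigl(\sum_{\Gamma\in\mathcal{G}_i}t_\Gamma D_\Gamma\bigr)$. The first step is to identify which terms survive. Exactly as in the proof of Theorem~\ref{thm:main}, a product of boundary divisors multiplied by a power of $\xi$ vanishes unless the boundary divisors assemble into a single configuration of a very restricted shape: a "core" locus carrying $z_1$, $z_2$ and the poles of some subset $J_0$, to which are attached (through semistable trees) top components on which every singularity order is divisible by $k$, hence which are $k$-th powers of abelian differentials and carry disjoint groups of the remaining poles. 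On such a top component with pole indices $J$, the $\mathcal{E}^k_{\mathcal{R}}$-GRC is precisely a relation $\sum_{i\in J}r_i=0$ for some $k$-th roots $r_i^k=R_i$, i.e. $J$ must be a \emph{resonant} subset. Conversely, each partition $\{1,\ldots,p\}=J_0\sqcup J_1\sqcup\cdots\sqcup J_s$ into resonant blocks $J_1,\ldots,J_s$ and a possibly empty remainder $J_0$, together with a choice of which $J_j$ attach on the $z_1$-side and which on the $z_2$-side, yields exactly one such configuration; it consumes $|S|=s$ of the cuts (one per resonant block), producing the global sign $(-1)^s$.

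Next I would evaluate each surviving term. On the top component carrying a resonant block $J_j$, the restricted stratum is $\text{MS}^k(\mu_{J_j})$ with $\mu_{J_j}=(B_{J_j}-2k,\{-b_i\}_{i\in J_j})$, all of whose orders are divisible by $k$; its fiber count over the prescribed roots is $\Ab_{\mathcal{R}}(J_j)\cdot\int_{\text{MS}(\mu_{J_j}/k)}\xi_{ab}^{|J_j|-2}$, which by Proposition~\ref{prop:k-minimal}, together with the prong and twisting factors at the attaching node, equals $f_{J_j}\,\Ab_{\mathcal{R}}(J_j)$, with $f_{J_j}$ the factor appearing in the statement. On the core locus and the semistable trees, the integral factors into $\psi$-class integrals over $\overline{\mathcal{M}}_{0,n}$ (each equal to $1$), products of prong numbers yielding the powers $(a_1+k)^{|I|-1}$ and $(a_2+k)^{|I^c|-1}$, and a final integral $\int_{\text{MS}^k(\mu_{I,I^c})}\xi^{|J_0|-1}$ over the component carrying $J_0$, which by Theorem~\ref{thm:main} equals $d_k(\mu_{I,I^c})$ when $J_0\neq\emptyset$ and equals $1$ when $J_0=\emptyset$ (that stratum being a single rigid point); assembling these reproduces $G_k(J_0;J_1,\ldots,J_s)$, and summing over all partitions and side-choices gives the claimed formula.

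The hard part will be the combinatorics of the second step. One must show rigorously that only the star-shaped two-level configurations above can contribute — ruling out deeper level structures by dimension count against the available $\xi$-powers — and, more delicately, deal with \emph{reducible} resonant blocks: a resonant $J_j$ can itself be split into proper resonant subsets, so a single point of $\text{MS}^k(\mu)$ is reached by several distinct configurations with different level graphs. This is exactly why the sum must range over \emph{all} partitions into resonant blocks (not merely the maximal ones), the signs $(-1)^s$ performing an inclusion–exclusion over refinements. Checking that this inclusion–exclusion collapses the overcount precisely, and that the residual factor simplifies to $G_k$ as written, should follow from the polynomial identities of Propositions~\ref{prop:fk-split} and~\ref{prop:zero-sum}, just as in the proof of Theorem~\ref{thm:main}.
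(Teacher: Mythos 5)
Your overall strategy coincides with the paper's: the same chain $\mathcal{M}_0\supset\cdots\supset\mathcal{M}_{p-1}$ cut out by the sections $R_p\Res_{p_i}\zeta-R_i\Res_{p_p}\zeta$, the same corrected divisor classes $D_i=\xi-\sum_\Gamma t_\Gamma D_\Gamma$, and the same evaluation of the top components via $\Ab_{\mathcal{R}}(J_j)$ times an abelian fiber count. But your organization differs in a way that matters: the paper never expands the full product $\prod_{i=1}^{p-1}D_i$. It expands only the last factor, writes $\deg\mathcal{M}_{p-1}=\int_{\mathcal{M}_{p-2}}\xi-\sum_\Gamma t_\Gamma\int_{\mathcal{M}_{p-2}}\delta_\Gamma$, and handles $\int_{\mathcal{M}_{p-2}}\xi$ by induction on the number of resonant subsets (it is the fiber count for a perturbed tuple $\mathcal{R}'$ in which $R_{p-1}$ is no longer resonant). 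Your full expansion over $S\subseteq\{1,\ldots,p-1\}$ forces you to control arbitrary products of boundary divisors and to justify the claim that exactly one star-shaped configuration survives per partition with sign $(-1)^s$; you assert this but do not prove it, and it is not true in the naive form you state, because a single top component can carry \emph{several} resonant blocks $J_j$ simultaneously, in which case its contribution is the multi-resonance abelian count $(-1)^{s_i-1}(B_{X_i}/k-1)^{s_i-2}\prod_j f_{J_j}$ rather than a product of one factor per block.

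This is where the genuine gap lies. The coefficient $(-1)^sG_k(J_0;J_1,\ldots,J_s)$ emerges only after summing the twisted contributions $t_\Gamma\,\mathrm{Coeff}^\Gamma(J_1,\ldots,J_s)$ over \emph{all} compatible level graphs, i.e.\ over all ways of distributing the blocks $J_1,\ldots,J_s$ among any number $m$ of top components. The paper proves the collapse by a specific cancellation: under the specialization $B_{J_s}=0$, the $m$ graphs in which the last component carries $K_m=K'_i$ cancel against the single graph with one extra empty component, so the total coefficient minus $(-1)^sG_k$ is divisible by $B_{J_s}$, hence by symmetry by $\prod_{j=1}^sB_{J_j}$, and being of degree at most $s-1$ it vanishes identically. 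You attribute this step to Propositions~\ref{prop:fk-split} and~\ref{prop:zero-sum}, but those identities are the ones used for the generic degree in Theorem~\ref{thm:main}; they do not produce the cancellation needed here. Without the evaluation-at-$B_{J_s}=0$ argument (or an equivalent identity governing the sum over groupings of resonant blocks into components), the combinatorial heart of the proof is missing.
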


\begin{proof}
    We define the spaces $\mathcal{M}_0 \supset \mathcal{M}_1 \supset \cdots\supset\mathcal{M}_{p-1}$ as in the preceding proof. Then
    \begin{equation*}
       \deg \mathcal{M}_{p-1} = \int_{\mathcal{M}_{p-2}}\xi-\sum_{\Gamma \in \mathcal{G}_{p-1}}t_{\Gamma}\int_{\mathcal{M}_{p-2}}\delta_{\Gamma}\,,
   \end{equation*}
   where we interpret the right-hand side as a combination of intersection numbers. 
   The first term on the right-hand side corresponds to the isoresidual fiber over a residue tuple $\mathcal{R}'$ in which all residues are equal to those in $\mathcal{R}$, except for the $(p-1)$-th residue, which does not belong to any resonance subset. By induction on the number of resonance subsets, we obtain 
   \begin{equation*}
       \int_{\mathcal{M}_{p-2}}\xi = \sum_{\substack{J_0\sqcup J_1\sqcup \cdots \sqcup J_s \\ p-1 \in J_0}}(-1)^sG_k(J_0;J_1,\ldots,J_s)\prod_{j=1}^{s_i}f_{J_j}\Ab_{\mathcal{R}}(J_j)\,.
   \end{equation*}
   
    In the case where $R_{p-1}$ does not belong to any resonance subset, we may simply re-index the poles and choose a residue that does. On the other hand, to compute  the degree of $\int_{\mathcal{M}_{p-2}}\delta_{\Gamma}$, assume that the two-level graph $\Gamma$ has $m+1$ top components $X_0;X_1,\ldots,X_m$ and a bottom component $X_b$, where $X_0$ and $X_b$ contain the marked points $z_i$ and $z_j$, respectively, for $i,j=1,2$. We allow $X_0$ to be empty, which corresponds to the situation where both zeroes $z_1$ and $z_2$ lie on the bottom component $X_b$.  
    
    Fixing the residues $R_1,\ldots,R_{p-2},R_p$ imposes resonant conditions on every irreducible component as follows. If a resonant subset is properly contained in a top component, it imposes a condition on the differential restricted to that component. If a resonant subset indexes all the poles in that component and does not  contain $p-1$, then the residue at the corresponding node vanishes for the differential on $X_b$. 
    
    Let $\mathcal{M}^{\Gamma}_i$ denote the restriction of the divisor $\delta_{\Gamma}|_{X_i}$ with the residue conditions imposed by $\mathcal{M}_{p-2}$. Denote by $d_b$ the dimension of the space $\mathcal{M}^{\Gamma}_b$ corresponding to the bottom component. Then  
    \begin{equation*}
        \int_{\mathcal{M}_{p-2}}\delta_{\Gamma} = 
        \begin{cases} 
      \deg \mathcal{M}^{\Gamma}_b \prod_{i=0}^{m}\deg \mathcal{M}^{\Gamma}_i\,, & \text{if} \ d_b=0\,, \\      
      0\,, & \text{if} \ d_b>0\,.
   \end{cases}       
    \end{equation*}

Observe that each space $\mathcal{M}^{\Gamma}_i$ for $i=1,\ldots,m$ corresponds to a stratum $\komoduli[0](\mu_{i})$ with orders $\mu_i=(B_{X_i}-2k,\{-b_j\}_{p_j\in X_i})$, where $B_{X_i}=\sum_{p_j\in X_i}b_j$, and residues $\mathcal{R}_i=\{R_j\}_{p_j\in X_i}$. Since all orders of $\mu_i$ are multiples of $k$, every $k$-differential $\zeta \in \mathcal{M}^{\Gamma}_i$ can be written as $\zeta=\omega^k$ for some abelian differential $\omega$ with residues $r_i=\{r_j\}_{p_j\in X_i}$ satisfying $r_j^k=R_j$ for each $j$.

For each residue tuple $r_i$, Theorem 1.2 of \cite{ChPrIso} gives that the number of isoresidual differentials is
\begin{equation*}
     \sum_{J_1\sqcup \cdots \sqcup J_{s_i}} (-1)^{s_i-1}(B_{X_i}/k-1)^{s_i-2}\prod_{j=1}^{s_i}f_{J_j}\,,
\end{equation*}
where $f_I=f(B_I/k-1,|I|+1)$, and the sum runs over all partitions of $r_i$ into $s_i$ resonant subsets $J_1,\ldots,J_{s_i}$.

As in the single-resonance case, this implies that we need to consider
\begin{equation*}
    \Ab_{\mathcal{R}}(J_1,\ldots,J_s) = \#\left\{r=\{r_i\}_{i\in \sqcup_{j=1}^sJ_j} \ \Bigg| \ \sum_{i\in J_j}r_i=0 \text{ for } j=1,\ldots,s, \  r_i^k=R_i  \ \forall i\in \sqcup_{j=1}^sJ_j\right\}\Bigg/\mathbb{C}^*\,.
\end{equation*}
This function counts the number of times the term $(-1)^{s_i-1}(B_{X_i}/k-1)^{s_i-2}\prod_{j=1}^{s_i}f_{J_j}$ appears. Since every tuple $r \in \Ab_{\mathcal{R}}(J_1,\ldots,J_s)$ can be expressed as $r=[r_1,\zeta^{i_2}r_2,\ldots,\zeta^{i_s}r_s]$,  where $r_j\in \Ab_{\mathcal{R}}(J_j)$ and $\zeta$ is a $k$-root of unity, we obtain
\begin{equation*}
    \Ab_{\mathcal{R}}(J_1,\ldots,J_s) = k^{s-1}\prod_{j=1}^s \Ab_{\mathcal{R}}(J_j) \,.
\end{equation*}
Finally, if we also include the contribution of the prong $(B_{X_i}/k-1)$ at the node $q_i=X_i\cap X_b$, the total contribution of the term becomes
\begin{equation*}
     (-1)^{s_i-1}(B_{X_i}/k-1)^{s_i-1}\left(\prod_{j=1}^{s_i}f_{J_j}\right)\Ab_{\mathcal{R}}(J_1,\ldots,J_{s_i}) = (-1)^{s_i-1}(B_{X_i}-k)^{s_i-1}\prod_{j=1}^{s_i}f_{J_j}Ab_{\mathcal{R}}(J_j) \,.
\end{equation*}

We conclude that
\begin{equation*}
    (B_{X_i}/k-1)\deg \mathcal{M}^{\Gamma}_i = (-1)^{s_i-1}(B_{X_i}-k)^{s_i-1}\prod_{j=1}^{s_i}f_{J_j}\Ab_{\mathcal{R}}(J_j) \,.
\end{equation*}
To compute the degree of the bottom component moduli $\mathcal{M}^{\Gamma}_b$, assume that $p_{p-1}\in X_m$. Then the residue at every node $q_i$ for $i=1,\ldots,m-1$ vanishes. The residueless condition at a pole $q_i$ corresponds to the divisor class $(B_{X_i}-k)\psi_{q_i}$, where $\psi_{q_i}$ is the psi-class at the node $q_i$. By the well-known formula for the product of $\psi$-classes on the moduli space of pointed rational curves, we obtain 
\begin{equation*}
    \deg \mathcal{M}^{\Gamma}_b = \prod_{i=1}^{m-1}(B_{X_i}-k)\psi_{q_i} = (m-1)!\prod_{i=1}^{m-1}(B_{X_i}-k)\,.
\end{equation*}
We can then express the intersection product as
\begin{equation*}
    \int_{\mathcal{M}_{p-2}}\delta_{\Gamma} = \sum_{\substack{J_1\sqcup \cdots \sqcup J_s \\ p-1 \in J_i \text{ for some }i}} \text{Coeff}^{\Gamma}(J_1,\ldots,J_s)\prod_{j=1}^{s_i}f_{J_j}\Ab_{\mathcal{R}}(J_j) \,.
\end{equation*}

    To compute the coefficient $\text{Coeff}^{\Gamma}(J_1,\ldots,J_s)$, we first need a combinatorial description of the graphs where the therm $\prod_{i=1}^sf_{J_i}$ appears. These are the graphs in which each subset $J_j$ is entirely contained in some top component $X_i$. This defines a $(m+1)$-partition of the indices $\{1,\ldots,s\}$, where the set $J_0\coloneqq\left(\sqcup_{i=1}^rJ_{i}\right)^c$ is completely contained in $X_0$. Note that if $X_0$ is empty, then $J_0$ is empty; however, the converse is not necessarily true. We call a graph of this type  \textit{compatible} with $J_1,\ldots,J_s$. 
    
    Assume that $p-1 \in J_s$ and that $J_s$ is contained in $X_m$. Let $K_0,\ldots,K_m$ be a partition of the indices $\{1,\ldots,s-1\}$ such that $j\in K_i$ if $J_j\subset X_i$, and $K_m$ is possibly empty (this occurs when $J_s$ is the only resonant subset in $X_m$). Then: 
    \begin{eqnarray*}
        \deg \mathcal{M}^{\Gamma}_b & = & (m-1)!\prod_{i=1}^{m-1}(B_{X_i}-k)\,, \\
        t_{\Gamma} & = & d_{i,K_0^c\cup\{s\}}\prod_{i=1}^m(B_{X_i}/k-1)\,,\\
        (B_{X_i}/k-1)\text{Coeff}^{X_i}(\{J_j\}_{j\in K_i}) & = & (-1)^{|K_i|-1}(B_{X_i}-k)^{|K_i|-1}\,, \ \ i=1,\ldots,m-1\,,\\
        (B_{X_m}/k-1)\text{Coeff}^{X_m}(\{J_j\}_{j\in K_m\cup\{s\}}) & = & (-1)^{|K_m|}(B_{X_m}-k)^{|K_m|}\,,       
    \end{eqnarray*}
    where $d_{i,I}=a_i+k-\sum_{j\in I}B_{J_j}$. It follows that
    \begin{eqnarray*}
        t_{\Gamma}\text{Coeff}^{\Gamma}(J_1,\ldots,J_s) & = & t_{\Gamma}\deg \mathcal{M}^{\Gamma}_b\left(\prod_{i=0}^{m-1} \text{Coeff}^{X_i}(\{J_j\}_{j\in K_i})\right)\text{Coeff}^{X_m}(\{J_j\}_{j\in K_m\cup\{s\}})\\
        & = & (-1)^{s-m}(m-1)!d_{i,K_0^c\cup\{s\}}G_k(J_0;\{J_j\}_{j\in K_0})\left(\prod_{i=1}^{m}(B_{X_i}-k)^{|K_i|}\right)\,.
    \end{eqnarray*}
    
    Now, fixing $K_0$ (i.e., fixing the poles in the top component $X_0$), we sum over all compatible graphs: 
    \begin{eqnarray*}
        \sum_{\substack{\Gamma \text{ compatible} \\ K_0 \text{ fixed}}} t_{\Gamma}\text{Coeff}^{\Gamma}(J_1,\ldots,J_s) & = & d_{i,K_0^c\cup\{s\}}G_k(J_0;\{J_j\}_{j\in K_0}) H(\{J_j\}_{j\in K_0^c})\,, \\
        H(\{J_j\}_{j\in K_0^c}) & = &   \sum_{m=1}^{s-|K_0|}\sum_{K_1\sqcup\cdots\sqcup K_m}\left((-1)^{s-m}(m-1)!\prod_{i=1}^m(B_{X_i}-k)^{|K_i|}\right)\,.
    \end{eqnarray*}
    Observe that $H(\{J_j\}_{j\in K_0^c})$ is a polynomial in the variables $\{B_{J_j}\}_{j\in K_0^c\cup\{s\}}$ with integer coefficients. In particular, $H(\{J_j\}_{j\in K_0^c})|_{B_{J_s}=0}=0$ unless $K_0^c=\emptyset$. Indeed, for a fixed partition $K'_1,\ldots,K'_m$, there are~$m$ graphs in which the last component $X_m$ has $K_m=K'_i$ for some $i=1,\ldots,m$, and there is one graph with $m+2$ top components $X_0,\ldots,X_{m+1}$ such that $K_i=K'_i$ and $K_{m+1}=\emptyset$. The contributions of these graphs cancel. When $K_0^c=\emptyset$, there is only one graph with $m=1$ in which $X_1$ contains only $J_s$. For this graph,
    \begin{eqnarray*}
        t_{\Gamma}\text{Coeff}^{\Gamma}(J_1,\ldots,J_s)  & = & (-1)^{s-1}d_{i,\{s\}}G_k(J_0;J_1,\ldots,J_{s-1})\,,\\
        t_{\Gamma}\text{Coeff}^{\Gamma}(J_1,\ldots,J_s)|_{B_{J_s}=0}  & = & (-1)^{s-1}(a_i+k)G_k(J_0;J_1,\ldots,J_{s-1})|_{B_{J_s}=0}\,.
    \end{eqnarray*}
    
    Since $G_k(J_0;J_1,\ldots,J_{s-1})$ is a piecewise polynomial in $a_i,B_{J_0},\ldots, B_{J_s}$, where $a_j=(\sum_{j=0}^sB_{J_j})-a_i-2k$ as both $a_1,a_2$ appear in the definition of $G_k(J_0;J_1,\ldots,J_s)$, we conclude:
    \begin{eqnarray*}
       \text{Coeff}(J_1,\ldots,J_s)|_{B_{J_s}=0} & = & (-1)^s\sum_{i=1}^2 (a_i+k)G_k(J_0;J_1,\ldots,J_{s-1})|_{B_{J_s}=0} \\
       & = & (-1)^sG_k(J_0;J_1,\ldots,J_s)|_{B_{J_s}=0}\, .
    \end{eqnarray*}
    Hence, $B_{J_s}|\left[ \text{Coeff}(J_1,\ldots,J_s)-(-1)^sG_k(J_0;J_1,\ldots,J_s)\right]$. By symmetry, the same argument applies to any $B_{J_i}$. 
    Therefore, $\prod_{j=1}^s B_{J_j}\Big|\left[ \text{Coeff}(J_1,\ldots,J_s)-(-1)^sG_k(J_0;J_1,\ldots,J_s)\right]$. Since the left-hand side is a polynomial of degree at most $s-1$, it must be identically zero. This proves the desired claim.
    \end{proof}

\section{Flat geometric interpretation and examples}\label{sec:Flat}

In this section, we first sketch certain flat geometric ideas to prove Corollary~\ref{cor:polesk}, computing the isoresidual degree for strata of the form $\Omega^{k}\mathcal{M}_{0}(a_{1},a_{2},\rec[-k][p])$ with $k \geq 2$. Then we give an alternative formula in Section~\ref{sec:alterformula} and finally we give some exemples in Section~\ref{sec:ex}, both in the interesction theoretic setting and in the flat setting.
\smallskip
\par
\subsection{Some degenerations.}

First, given $\epsilon>0$, we define
$$U_{\epsilon} = \left\{(R_{1},\dots,R_{p})\in (\CC^{\ast})^{p} : |R_{j+1}| \leq \epsilon|R_{j}|\  \forall 1 \leq j \leq p-1 \right\}\,.$$
Recall that the resonance locus was introduced in Definition~\ref{defn:stratres}. The relevance of $U_{\epsilon}$ is established by the following result.

\begin{lem}\label{lem:UepsilonResonance}
Given a stratum $\Omega^{k}\mathcal{M}_{0}(a_{1},a_{2},\rec[-k][p])$ with $k \geq 2$, for sufficiently small $\epsilon$, the open set~$U_{\epsilon}$ is disjoint from the resonance locus.
\end{lem}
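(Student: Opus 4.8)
The plan is to reduce the statement to an elementary quantitative estimate on sums of $k$-th roots of the residues. By Definition~\ref{defn:stratres}, a tuple $\calR=(R_{1},\dots,R_{p})\in(\CC^{\ast})^{p}$ lies in the resonance locus if and only if some point $(r_{1},\dots,r_{p})$ on a resonance hyperplane of $H_{k,p}$ satisfies $r_{j}^{k}=R_{j}$ for all $j$; equivalently, there are $k$-th roots $r_{j}$ of $R_{j}$ and weights $w_{j}\in W_{k}$, not all zero, with $\sum_{j=1}^{p}w_{j}r_{j}=0$. Since every $R_{j}$ is nonzero, at least two of the $w_{j}$ are nonzero. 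I would thus assume that some $\calR\in U_{\epsilon}$ carries such a relation and derive a contradiction for $\epsilon$ small.

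First I would extract the consequence of membership in $U_{\epsilon}$: taking $k$-th roots of the inequalities $|R_{j+1}|\le\epsilon|R_{j}|$ gives $|r_{j+1}|\le\epsilon^{1/k}|r_{j}|$, hence $|r_{j}|\le\epsilon^{(j-j_{0})/k}|r_{j_{0}}|$ for every $j\ge j_{0}$, where $j_{0}$ denotes the least index with $w_{j_{0}}\neq0$. Isolating the index-$j_{0}$ term of the resonance relation and using $|w_{j_{0}}|=1$ (a $k$-th root of unity) together with $|w_{j}|\le1$ for the other weights, one gets
\[
 |r_{j_{0}}|=\Bigl|\sum_{j>j_{0}}w_{j}r_{j}\Bigr|\le\sum_{j>j_{0}}|r_{j}|\le|r_{j_{0}}|\sum_{\ell=1}^{p-1}\epsilon^{\ell/k}\,.
\]
As $R_{j_{0}}\neq0$, dividing by $|r_{j_{0}}|>0$ yields $1\le\sum_{\ell=1}^{p-1}\epsilon^{\ell/k}$, which fails as soon as $\sum_{\ell=1}^{p-1}\epsilon^{\ell/k}<1$, for instance whenever $\epsilon\le 2^{-k}$. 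This contradiction proves that for such $\epsilon$ the set $U_{\epsilon}$ meets no resonance hyperplane, hence is disjoint from the resonance locus.

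The estimate is uniform both in the choice of $k$-th roots and in the support $\{j:w_{j}\neq0\}$ of the weights, since the final bound involves only $p$, $k$ and $\epsilon$; a single threshold for $\epsilon$ therefore handles the finitely many resonance hyperplanes of $H_{k,p}$ simultaneously. I do not anticipate a real obstacle: the only step that requires any care is the dictionary between the definition of the resonance locus as the image of a hyperplane arrangement and the concrete ``vanishing partial sum of $k$-th roots'' condition, which amounts to absorbing each nonzero weight $w_{j}$, being a $k$-th root of unity, into the corresponding root $r_{j}$.
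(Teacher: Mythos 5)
Your proof is correct and follows essentially the same route as the paper: identify the first index $j_{0}$ with nonzero weight, bound the tail of the weighted sum of $k$-th roots by a geometric series in terms of $|r_{j_{0}}|$, and conclude that the sum cannot vanish for small $\epsilon$. Your version is in fact slightly more careful, since you correctly convert the condition $|R_{j+1}|\le\epsilon|R_{j}|$ into $|r_{j+1}|\le\epsilon^{1/k}|r_{j}|$ for the $k$-th roots (the paper's displayed estimate uses $\epsilon^{j-j_{0}}$ where $\epsilon^{(j-j_{0})/k}$ is what the definition of $U_{\epsilon}$ actually yields, a harmless slip that does not affect the conclusion).
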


\begin{proof}
Consider $\xi \in \Omega^{k}\mathcal{M}_{0}(a_{1},a_{2},\rec[-k][p])$ whose $k$-residue tuple is $(R_{1},\dots,R_{p}) \in U_\epsilon$.
Let $r_{1},\dots,r_{p}$ be arbitrary $k$-th roots of $R_{1},\dots,R_{p}$, respectively, and let $w_{1},\dots,w_{p}$ be weights in 
$$W_{k}=\lbrace{ 0\rbrace} \cup \lbrace{ e^{\frac{2li\pi}{k}}~|~l \in \mathbb{Z}/k\mathbb{Z} \rbrace}\,,$$
not all equal to zero. As $\epsilon$ tends to $0$, the weighted sum $\sum\limits_{j=1}^{p} w_{j}r_{j}$ can be made arbitrarily close in modulus to $r_{j_{0}}$, where $w_{j_{0}}$ is the first nonzero weight in the sequence $w_{1},\dots,w_{p}$. Indeed, for $0<\epsilon<1$, we have $$\left| w_{j_{0}}r_{j_{0}}-\sum\limits_{j=1}^{p} w_{j}r_{j} \right | \leq |r_{j_{0}}|\sum\limits_{j > j_{0}}\epsilon^{j-j_{0}} \leq \frac{\epsilon}{1-\epsilon}|r_{j_{0}}|\,.$$
It follows that no such weighted sum can vanish when $\epsilon$ is sufficiently small. Hence, according to Definition~\ref{defn:stratres}, the residues $(R_{1},\dots,R_{p})$ lie outside the resonance locus.
\end{proof}

The local model of a pole of order $-k$ is the end of a cylinder bounded by one or several saddle connections  that meet at corners with an angle $\pi$. We refer to this cylinder as the \textit{domain} of the corresponding pole. We will show that, under generic conditions on the arguments of a configuration of $k$-residues lying in $U_{\epsilon}$, each such cylinder is bounded by a unique closed saddle connection incident to a unique conical singularity.

\begin{lem}\label{lem:uniqueconselle}
In a stratum $\Omega^{k}\mathcal{M}_{0}(a_{1},a_{2},\rec[-k][p])$ with $k \geq 2$, consider a $k$-differential realizing a configuration of $k$-residues $(R_{1},\dots,R_{p}) \in U_{\epsilon}$. For $\epsilon$ sufficiently small, the boundary of the domain of any pole with residue $R_{j}$, for $2 \leq j \leq p$, is formed by a closed saddle connection.
\end{lem}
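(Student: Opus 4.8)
The plan is to argue entirely in the flat picture of $(\mathbb{CP}^1,\zeta)$. Write $C_{j}$ for the domain of the $j$-th pole: a half-infinite flat cylinder whose circumference equals $|r_{j}|$ for some $k$-th root $r_{j}$ of $R_{j}$ (in the normalisation of Section~\ref{sub:Periods}), so that its boundary $\partial C_{j}$ is a closed broken geodesic of total length $|r_{j}|$ obtained by cyclically concatenating saddle connections that meet at corners of angle $\pi$ on the cylinder side. The driving mechanism is the hierarchy in $U_{\epsilon}$: once $\epsilon$ is small, $\sum_{i\geq 2}|r_{i}|$ is geometrically dominated by $|r_{1}|$, so $|r_{j}|\leq\epsilon^{1/k}|r_{1}|$ for $j\geq 2$ and $\partial C_{j}$ is short compared with the largest residue scale.

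First I would exclude long saddle connections from $\partial C_{j}$. If $\gamma$ is a saddle connection joining $z_{1}$ and $z_{2}$, cutting $\mathbb{CP}^1$ along $\gamma$ yields a translation surface with two boundary arcs of periods $z$ and $-\varepsilon z$, $\varepsilon^{k}=1$, and the residue theorem gives $(1-\varepsilon)z=-\sum_{i=1}^{p}r_{i}$ for a suitable choice of $k$-th roots $r_{i}$. By Lemma~\ref{lem:UepsilonResonance} the tuple lies off the resonance locus, hence $\sum_{i}r_{i}\neq 0$, so $\varepsilon\neq 1$ and $|\gamma|=|\sum_{i}r_{i}|/|1-\varepsilon|\geq\tfrac14|r_{1}|$ (using $|\sum_{i}r_{i}|\geq\tfrac12|r_{1}|$ and $|1-\varepsilon|\leq 2$). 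Since $|r_{j}|<\tfrac14|r_{1}|$ for $j\geq 2$ and $\epsilon$ small, no such $\gamma$ fits inside $\partial C_{j}$. Therefore every saddle connection of $\partial C_{j}$ is a loop based at a conical point, and, consecutive ones sharing an endpoint, they are all loops at the same point, which we name $z_{1}$.

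It remains to rule out more than one loop. Suppose $\partial C_{j}$ were a wedge of $m\geq 2$ loops $s_{1},\dots,s_{m}$ at $z_{1}$. These cut $\mathbb{CP}^1$ into $m+1$ disks; since $C_{j}$ is adjacent to all of them and, being connected, cannot straddle any $s_{i}$, the remaining disks are ``petals'' $P_{1},\dots,P_{m}$, each bounded by one $s_{i}$. The point $z_{2}$ lies in exactly one petal, say $P_{1}$, so every other $P_{i}$ is a translation surface all of whose poles have index $>j$ — otherwise the estimate $|s_{i}|\geq\tfrac12|r_{\min}|$ valid in $U_{\epsilon}$ would make $s_{i}$ longer than $|r_{j}|$ — and therefore $|s_{i}|\leq\sum_{i'>j}|r_{i'}|\leq 2\epsilon^{1/k}|r_{j}|$. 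Meanwhile $s_{1}$ bounds a translation surface containing $p_{j}$ together with those small poles, so $|s_{1}|=|r_{j}+w|$ with $w\neq 0$ and $|w|\leq 2\epsilon^{1/k}|r_{j}|$. The identity $\sum_{i}|s_{i}|=|r_{j}|$ then forces $|r_{j}+w|<|r_{j}|$, i.e. $\operatorname{Re}(r_{j}\overline{w})<-\tfrac12|w|^{2}$, so $w$ lies in the open half-plane opposite $r_{j}$; since $w$ equals, up to relative error $O(\epsilon^{1/k})$, a single $k$-th root of the dominant residue $R_{\ell}$ among the poles of the remaining petals, this imposes a linear relation between $\arg R_{\ell}$ and $\arg R_{j}$, modulo an error tending to $0$ with $\epsilon$, which fails for a generic choice of arguments. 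Hence $m=1$.

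The step I expect to be the crux is this last one: $|r_{j}+w|<|r_{j}|$ is a priori only an open condition on $\arg w$, so the argument genuinely needs that $w$ is, within a controlled error, one of the finitely many $k$-th roots of a fixed residue, and the $O(\epsilon^{1/k})$ estimates must be made uniform in the tuple in order to confine the excluded configurations to an actual hypersurface in the arguments. Setting up the combinatorial description of $\partial C_{j}$ as a wedge of loops with its petal regions, and checking that the circumference of $C_{j}$ really equals the total length of $\partial C_{j}$, is routine but should be carried out carefully.
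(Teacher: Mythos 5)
Your first step reproduces the paper's entire proof: in $U_{\epsilon}$ any saddle connection joining the two distinct conical singularities has period of the form $\pm\tfrac{1}{1-w_{0}}\sum_{j}w_{j}r_{j}$, hence length comparable to $|r_{1}|$, while the boundary of the $j$-th cylinder has total length $|r_{j}|\leq\epsilon^{1/k}|r_{1}|$ for $j\geq 2$; therefore every boundary saddle connection is a loop, and since consecutive loops share their base point the boundary meets a single conical singularity. The paper stops exactly there, and that single-singularity conclusion is all it uses afterwards (a cylinder is ``attached to'' one of the $A_{i}$).

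Your second step, ruling out a wedge of $m\geq 2$ loops, goes beyond what the paper proves, and it has a genuine gap precisely where you suspected one. From $\sum_{i}|s_{i}|=|r_{j}|$ and $|s_{1}|=|r_{j}+w|$ you only extract $\Rea (r_{j}\bar w)<-\tfrac12|w|^{2}$, which — even after replacing $w$ by the dominant root $r_{\ell}$ up to relative error $O(\epsilon^{1/k})$ — is an \emph{open} condition on $\arg R_{\ell}-\arg R_{j}$ (roughly, that the angle between $r_{\ell}$ and $r_{j}$ exceeds $\pi/2$). No uniformity in the error terms can confine an open condition to a hypersurface, so ``fails for a generic choice of arguments'' is not available. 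The condition that does fail generically is an equality already present in your setup but left unused: the boundary of a flat cylinder develops onto a straight segment, so the periods of $s_{1},\dots,s_{m}$ are all positive real multiples of the holonomy $r_{j}$. For each petal $P_{i}$, $i\geq 2$, the period of $s_{i}$ equals $-\sum_{l\in I_{i}}r_{l}$ with $\emptyset\neq I_{i}\subset\{j+1,\dots,p\}$, so collinearity forces $\arg\bigl(\sum_{l\in I_{i}}r_{l}\bigr)\equiv\arg r_{j}+\pi$, i.e.\ up to $O(\epsilon^{1/k})$ the relation $\arg R_{\ell_{i}}\equiv\arg R_{j}+k\pi$ with $\ell_{i}=\min I_{i}$ — a genuine codimension-one constraint on the arguments. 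With that substitution your petal analysis closes. Note also that the stronger ``single loop'' conclusion genuinely requires the genericity of arguments announced in the paragraph preceding the lemma (it fails on the anti-parallel locus inside $U_{\epsilon}$), so it cannot follow from membership in $U_{\epsilon}$ alone, which is further evidence that the purely metric inequality you used cannot suffice.
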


\begin{proof}
According to Section~\ref{sub:Periods}, in a translation chart the period of a saddle connection between two conical singularities is of the form 
$$\pm \frac{1}{1-w_{0}}\sum\limits_{j=1}^{p} w_{j}r_{j}\,,$$ 
with nonzero $w_{0},w_{1},\dots,w_{p}$. For $\epsilon$ sufficiently small, the length of such a saddle connection is arbitrarily close to $|\frac{r_{1}}{1-w_{0}}|$, and is therefore larger than the circumference of any cylinder forming the domain of a pole with $k$-residue $R_{j}$ for $2 \leq j \leq p$. It follows that the boundary of these cylinders is formed by closed saddle connections and therefore contains only one conical singularity.
\end{proof}

We say that the cylinder of the $j$-th pole is \textit{attached} to the conical singularity $A_{i}$ (of order $a_{i}$) when the endpoints of its boundary saddle connection coincide with $A_{i}$.
\par
The $k$-differentials in the same isoresidual fiber over $U_{\epsilon}$ can thus be described in terms of the way these cylinders are attached to the conical singularities. We distinguish two cases, depending on  whether the corresponding flat surface has one or two conical singularities.

\subsection{The case $a_{2}<-k$: one conical singularity}\label{sub:oneCS}

For such strata, Corollary~\ref{cor:polesk} states that for a generic configuration of $k$-residues, the number of $k$-differentials in the corresponding isoresidual fiber is 
$$\frac{a_{1}!_{(k)}}{(a_{1}+k-kp)!_{(k)}}\,.$$ 
The following sketch of the proof parallels the one given in the abelian case in \cite[Section~4.1]{GeTaIso}.
\par
We consider a fiber over a configuration of $k$-residues in $U_{\epsilon}$ and proceed by induction on $p$. In the case $p=1$, the isoresidual fiber contains  only a single $k$-differential. The flat surface associated with this $k$-differential consists of a cone of angle $2\pi\frac{a_{1}+k}{k}$ to which a polar region with the given residue is attached.
\par
Now suppose the formula has been proved for $p-1$ poles. We then add a new pole of order~$-k$, whose residue is real and has magnitude at most $\epsilon$ times smaller than the other residues. We place the new pole at a separatrix of slope a multiple of $\frac{2\pi}{k}$ emanating from the singularity $z_1$ of order $a_{1}$ on the flat surfaces of the isoresidual fiber in the stratum $\komoduli[0](a_{1},a_{2};-b_{1},\dots,-b_{p-1})$ as follows. 

In terms of flat geometry, consider a separatrix $v$ of length $r_{p}$ starting at $z_1$. Note that the points lying on the line orthogonal to $v$ form a band that contains no conical singularities in its interior and intersects $z_{1}$ only at the starting point of $v$. We remove the upper half-infinite portion of this band and glue the two half-infinite boundary rays together by translation. Finally, we attach a half-infinite cylinder along $v$ to obtain the desired surface.
In the analytic picture, this corresponds to smoothing a twisted $k$-differential obtained by gluing a $k$-differential in $\komoduli[0](a_{1}+k;-k,-a_{1}-2k)$ to a differential in $\komoduli[0](a_{1},a_{2};-b_{1},\dots,-b_{p-1})$ in such a way that the residue at the pole of order~$-k$ is real.

On each flat surface to which we attach the horizontal pole, there are $a_{1}+k$ horizontal directions where we can place the new pole of order $-k$. Hence, the total number of possibilities is 
\begin{eqnarray*}
 (a_{1}+k) \frac{a_{1}!_{(k)}}{(a_{1}+k-k(p-1))!_{(k)}} &=& \frac{(a_{1}+k)!_{(k)}}{(a_{1}+k-k(p-1))!_{(k)}}\\
 &=& \frac{(a_{1}+k)!_{(k)}}{((a_{1}+k)+k-kp)!_{(k)}}\,.
\end{eqnarray*}

\subsection{The case $a_{1},a_{2}>-k$: two conical singularities}\label{sub:twoCS}

We now sketch two flat-geometric proofs.

\subsubsection{The first proof} We perform induction on the number $p$ of poles. One has that the degree is $1$ in any stratum $\Omega^{k}\mathcal{M}_{0}(a_{1},a_{2},-k)$ with $0>a_{1},a_{2}>-k$.
Suppose that Corollary~\ref{cor:polesk} is proved for $\Omega^{k}\mathcal{M}_{0}(a_{1},a_{2},\rec[-k][p])$. Consider an isoresidual fiber with the last residue $R_{p}$ real and such that the residues belong to $U_{\epsilon}$ for sufficiently small $\epsilon$. Let $R_{p}$ tend to zero. Note that this does not change the number of differentials in the fiber since we are in $U_{\epsilon}$. 

Now, either the pole was attached to $a_{1}$, in which case there are $\binom{p-2}{s-1} (a_{1}-k)!_{(k)} \cdot a_{2}!_{(k)}$ limits, or to $a_{2}$, in which case there are  $\binom{p-2}{s} a_{1}!_{(k)} \cdot (a_{2}-k)!_{(k)}$ limits. In the first case, we glue the pole of order $k$ at the zero of order $a_{1}-k$ in one of the $a_{1}$ horizontal directions. Similarly, in the second case, we glue it at the zero of order $a_{2}-k$.

Since there are $a_{i}$ horizontal directions at a zero of order $a_{i}-k$, the total number of such differentials is
$$\binom{p-2}{s-1} \cdot a_{1}\cdot (a_{1}-k)!_{(k)} \cdot a_{2}!_{(k)}  + \binom{p-2}{s} \cdot a_{2}\cdot a_{1}!_{(k)} \cdot (a_{2}-k)!_{(k)} = a_{1}!_{(k)} \cdot a_{2}!_{(k)} \cdot \binom{p-1}{s}\,.$$

\subsubsection{The second proof}

We introduce the notations $a_{1}=k \cdot d_{1}+\overline{a}_{1}$ and $a_{2}=k \cdot d_{2}+\overline{a}_{2}$, where $-k<\overline{a}_{1}<-\frac{k}{2}$ and $-\frac{k}{2}<\overline{a}_{2}<0$. We have $d_{1}+d_{2}=p-1$. The degree of the isoresidual cover is then 
$$
\binom{p-1}{d_{1}}
\cdot
\prod\limits_{1 \leq i \leq d_{1}} (\overline{a}_{1}+k\cdot i)
\cdot
\prod\limits_{1 \leq j \leq d_{2}} (\overline{a}_{2}+k\cdot j)\,.
$$

Consider a configuration $(R_{1},\dots,R_{p})$ of $k$-residues that belong to $U_{\epsilon}$ for some small $\epsilon$. We consider the path in $U_{\epsilon}$ defined by $(R_{1},tR_{2},\dots,tR_{p})$ with $t \in ]0,1]$. In the limit $t \rightarrow 0$, the $k$-differentials of the corresponding isoresidual fibers degenerate to multi-scale differentials whose dual graph is a cherry graph (each irreducible component is a complex line):
\begin{itemize}
\item A top component contains the pole of order $-k$ with $k$-residue $R_{1}$ and two nodal singularities of orders $\overline{a}_{1}$ and $\overline{a}_{2}$. 
\item A bottom component contains $z_{1}$, $d_{1}$ poles of order $-k$, and a nodal singularity of order $-2k-\overline{a}_{1}$;
\item Another bottom component contains $z_{2}$, $d_{2}$ poles of order $-k$, and a nodal singularity of order $-2k-\overline{a}_{2}$.
\end{itemize}

There is only one possible shape for the top component. We then apply the formula proved in Section~\ref{sub:oneCS} to count the possible configurations for the two bottom components. There are $\binom{p-1}{d_{1}}$ ways to split the $p-1$ poles of order $-k$ corresponding to the $k$-residues $tR_{2},\dots,tR_{p}$. Multiplying these factors yields the degree formula.

\subsection{An alternative formula}\label{sec:alterformula}

Given that a singularity of order $a>-k$ of a $k$-differential can be interpreted as a conical singularity of angle $\frac{a+k}{k}2\pi$ in the corresponding flat metric, the degree formula can be written as follows.

\begin{prop}
Assuming that $p \geq 1$, $a_{1},a_{2}>-k$, and that $a_{1}$ and $a_{2}$ are coprime with $k$, the number of $k$-differentials in $\Omega^{k}\mathcal{M}_{0}(a_{1},a_{2},\rec[-k][p])$ that realize a generic configuration of residues is 
$$
k^{p-1}
\cdot
\binom{p-1}{\lceil a_{1}/k \rceil +1}
\cdot
\frac{|\sin(\alpha_{1}\pi)|}{\pi}\Gamma(\alpha_{1})\Gamma(\alpha_{2})\,,
$$
where $2\alpha_{1}\pi$ and $2\alpha_{2}\pi$ are the conical angles corresponding to the singularities of orders $a_{1}$ and~$a_{2}$, respectively.
\end{prop}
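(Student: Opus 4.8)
The strategy is to obtain this formula by rewriting the degree computed in Corollary~\ref{cor:polesk} in terms of the cone-angle parameters, so that the entire content of the statement becomes an elementary identity for the Gamma function. Set $\alpha_i=\frac{a_i+k}{k}$, so that $2\pi\alpha_i$ is the cone angle at the singularity of order $a_i$. Since $\mu=(a_1,a_2,-k,\dots,-k)$ is a partition of $-2k$ with $p$ poles of order $-k$, we have $a_1+a_2=k(p-2)$, hence $\alpha_1+\alpha_2=p$. The hypotheses $a_i>-k$ and $\gcd(a_i,k)=1$ give $\alpha_i>0$ and $\alpha_i\notin\Z$, so $\Gamma(\alpha_1)$, $\Gamma(\alpha_2)$ are finite and positive and $\sin(\pi\alpha_1)\neq0$; in particular the right-hand side is a well-defined positive real number, as is the left-hand side by Corollary~\ref{cor:polesk}.

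The first step is to convert the analytic quantity $\frac{|\sin(\pi\alpha_1)|}{\pi}\,\Gamma(\alpha_1)\Gamma(\alpha_2)$ into a product of integers. Writing $\beta=\{\alpha_1\}\in(0,1)$ and using $\alpha_2=p-\alpha_1$, I repeatedly apply the functional equation $\Gamma(x+1)=x\Gamma(x)$ to extract $\lfloor\alpha_1\rfloor$ shifts out of $\Gamma(\alpha_1)$ and $\lfloor\alpha_2\rfloor=p-1-\lfloor\alpha_1\rfloor$ shifts out of $\Gamma(\alpha_2)$; this expresses $\Gamma(\alpha_1)\Gamma(\alpha_2)$ as a product of $p-1$ explicit affine factors in $\alpha_1$ times $\Gamma(\beta)\Gamma(1-\beta)$. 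By Euler's reflection formula $\Gamma(\beta)\Gamma(1-\beta)=\pi/\sin(\pi\beta)$, and since $|\sin(\pi\alpha_1)|=\sin(\pi\beta)$ because $\alpha_1\notin\Z$, the prefactor $\frac{|\sin(\pi\alpha_1)|}{\pi}$ exactly cancels the reflection pair. Substituting back $a_1=k(\alpha_1-1)$ turns each remaining affine factor into $\frac1k$ times an integer of the form $a_1-ik$ or $ik-a_1$, so that $k^{p-1}\,\frac{|\sin(\pi\alpha_1)|}{\pi}\,\Gamma(\alpha_1)\Gamma(\alpha_2)$ becomes, up to an explicit sign, the product $\prod_{i=0}^{p-2}(a_1-ik)$.

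The second step is to put the expression of Corollary~\ref{cor:polesk} into the same form. Using $a_2=k(p-2)-a_1$, each factor of $a_2!_{(k)}=\prod_{i=0}^{\lceil a_2/k\rceil}(a_2-ik)$ is the negative of a shifted factor $a_1-jk$; combining this with $a_1!_{(k)}=\prod_{i=0}^{\lceil a_1/k\rceil}(a_1-ik)$ and the identity $\lceil a_1/k\rceil+\lceil a_2/k\rceil=p-1$ (valid since $a_1/k+a_2/k$ is an integer and neither summand is), the two products overlap along two factors and collapse to $\prod_{i=0}^{p-2}(a_1-ik)$ together with two boundary factors governed by the residue $a_1\bmod k$. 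Matching this with the outcome of the first step — and using that the degree, the binomial coefficient and the Gamma values are all positive, which fixes every sign that appears — identifies the two formulas and, via Corollary~\ref{cor:polesk}, proves the statement; the trivial case $p=1$, where all products are empty, is checked by hand.

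The difficulty here is entirely bookkeeping rather than conceptual: the work is to track ceilings versus floors of $a_i/k$, to decide which of the affine factors are positive and which negative, and to check that the signs produced by the Gamma recursion cancel against those carried by the two $k$-factorials, including the degenerate range $\lceil a_i/k\rceil=0$ where some products are empty. Once the signs are under control, the two sides agree term by term, so nothing beyond the reflection and recursion formulas for $\Gamma$ is needed.
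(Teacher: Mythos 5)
Your strategy coincides with the paper's: both reduce to Corollary~\ref{cor:polesk} and pass between the $k$-factorial product and the Gamma expression via $\Gamma(x+1)=x\Gamma(x)$ and Euler's reflection formula (the paper runs the computation from factorials to Gamma values; you run it in reverse, which is the same content). Your first step is correct: since $\alpha_1+\alpha_2=p$ and neither $\alpha_i$ is an integer, peeling off $\lfloor\alpha_1\rfloor+\lfloor\alpha_2\rfloor=p-1$ affine factors and cancelling $\Gamma(\beta)\Gamma(1-\beta)$ against $|\sin(\pi\alpha_1)|/\pi$ gives $k^{p-1}\tfrac{|\sin(\pi\alpha_1)|}{\pi}\Gamma(\alpha_1)\Gamma(\alpha_2)=(-1)^{c_2+1}\prod_{i=0}^{p-2}(a_1-ik)$, where $a_j=c_jk+f_j$ with $0<f_j<k$.

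The gap is in your matching step. Taking the paper's definition $a!_{(k)}=\prod_{i=0}^{\lceil a/k\rceil}(a-ik)$ literally, the index set for $a_1!_{(k)}$ is $\{0,\dots,c_1+1\}$ and, after the substitution $a_2-ik=-\left(a_1-(p-2-i)k\right)$, the index set for $a_2!_{(k)}$ is $\{c_1,\dots,p-2\}$. These do overlap in exactly two indices, as you say, but the two repeated factors are $a_1-c_1k=f_1$ and $a_1-(c_1+1)k=f_1-k$, so $a_1!_{(k)}\,a_2!_{(k)}=(-1)^{c_2+1}f_1(k-f_1)\prod_{i=0}^{p-2}(a_1-ik)$. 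The surplus $f_1(k-f_1)$ is a positive integer equal to $1$ only when $k\le 2$; positivity fixes signs but cannot remove this factor, so for $k\ge 3$ the two expressions you claim to identify genuinely differ. Concretely, for $k=3$ and $\mu=(1,2,-3,-3,-3)$ the degree is $4$ by Theorem~\ref{thm:main}, while $\binom{2}{1}\,1!_{(3)}\,2!_{(3)}=2\cdot(-2)\cdot(-2)=8=f_1(k-f_1)\cdot 4$. The resolution, used implicitly in the paper's own proof when it writes $a_i!_{(k)}=k^{c_i+1}\prod_{j=0}^{c_i}(\tfrac{a_i}{k}-j)$, is that the $k$-factorial must stop at the last positive term, i.e.\ $\prod_{i=0}^{\lceil a/k\rceil-1}(a-ik)$; then the two index ranges are exactly complementary, there is no overlap, and the identity holds with no boundary factors at all. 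Your argument needs to detect and correct this off-by-one rather than absorb it into sign bookkeeping. Separately, your derivation produces the binomial coefficient $\binom{p-1}{\lceil a_1/k\rceil}$ of Corollary~\ref{cor:polesk}, not the $\binom{p-1}{\lceil a_1/k\rceil+1}$ of the statement; these agree only when $2\lceil a_1/k\rceil=p-2$, and for $\Omega^2\mathcal{M}_0(3,-1,-2,-2,-2)$ the stated formula returns $\binom{2}{3}=0$ against a fiber of cardinality $3$. You should identify this as an error in the statement instead of asserting that the two formulas coincide.
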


\begin{proof}
We write $a_{i}=c_{i}k+f_{i}$ with $0<f_{i}<k$ (which is a convention different from that of Section~\ref{sub:twoCS}). The last part of Theorem~\ref{thm:main} shows that there are 
$$\binom{p-1}{\lceil a_{1}/k \rceil} \cdot a_{1}!_{(k)} \cdot a_{2}!_{(k)}$$ such differentials.
\par
Writing explicitly the $k$-factorial functions, we obtain $$
a_{i}!_{(k)} 
=
k^{c_{i}+1} \cdot \frac{x}{k}\cdot(\frac{a_{i}}{k}-1)\cdots(\frac{a_{i}}{k}-c_{i})\,.
$$
Combining this formula with the functional equation of the $\Gamma$ function, $z(z+1)\cdots(z+k)=\frac{\Gamma(z+k+1)}{\Gamma(z)}$, we obtain
$$
a_{i}!_{(k)}=k^{c_{i}+1}\frac{\Gamma\left(\frac{a_{i}}{k}+1\right)}{\Gamma\left(\frac{f_{i}}{k}\right)}\,.
$$
Keeping in mind that $c_{1}+c_{2}=p-3$ and $f_{1}+f_{2}=k$, we obtain 
$$
a_{1}!_{(k)} \cdot a_{2}!_{(k)}
=
k^{p+1}\frac{\Gamma\left(\frac{a_{1}}{k}+1\right) \cdot \Gamma\left(\frac{a_{2}}{k}+1\right)}{\Gamma\left(\frac{f_{1}}{k}\right)\cdot \Gamma\left(1-\frac{f_{1}}{k}\right)}\,.
$$
Using the formula $\Gamma(z) \cdot \Gamma(1-z)=\frac{\pi}{\sin(\pi z)}$ and replacing $\frac{a_{i}+k}{k}$ by $\alpha_{i}$, we obtain
$$
a_{1}!_{(k)} \cdot a_{2}!_{(k)}
=
k^{p-1}
\cdot
\frac{\sin\left(\frac{\pi f_{1}}{k}\right)}{\pi}\Gamma(\alpha_{1})\Gamma(\alpha_{2})\,.
$$
Finally, since $\alpha_{1}\pi \equiv \frac{f_{1}\pi}{k}~[\pi]$, we have $|\sin\left(\frac{\pi f_{1}}{k}\right)| = |\sin(\alpha_{1}\pi)|$. The claim follows.
\end{proof}

\subsection{Examples}\label{sec:ex}

We will describe the $k$-differentials in certain isoresidual fibers to illustrate the constructions and computations discussed in the preceding section.

In the quadratic case ($k=2$), we consider quadratic residues in $\R_{>0}$ to describe the differentials in the fiber using the incidence graphs introduced in \cite[Section 2.4]{getaquad}. These graphs are embedded in the sphere, with vertices corresponding to the components of the differential obtained by cutting along its saddle connections. For each saddle connection, there is an edge connecting the two corresponding vertices. In the case of two zeros, \cite[Lemma 6.5]{getaquad} shows that the graph is a cycle of odd length with trees attached at its vertices. Each face corresponds to a zero; when both zeros have the same order, we assume that $a_{1}$ corresponds to the compact face.

\begin{ex}
 Consider the strata $\quadomoduli[0](3,-1;-2,-2,-2)$ and  $\quadomoduli[0](1,1;-2,-2,-2)$. By Corollary~\eqref{cor:polesk}, the degree is~$3$ in the first case and $2$ in the second. We draw the incidence graphs of these differentials for $R_{1}>R_{2}>R_{3}$ in Figures~\ref{fig:-1,3} and~\ref{fig:1,1}, respectively. Note that the two quadratic differentials in Figure~\ref{fig:1,1} are isomorphic, but their vertex numbering differs. 
 
 \begin{figure}[htb]
	\centering
\begin{tikzpicture} [scale=1.25]
 \begin{scope}
 \coordinate (a) at (0,0);\coordinate (b) at (-.3,1);\coordinate (c) at (.3,1);
 \draw (a) -- (b); \draw (a) -- (c);
 \draw (a) .. controls ++(45:1.5) and
++(0:.5) ..  (0,1.5) .. controls ++(180:.5) and
++(135:1.5) ..(a);
 \fill (a) circle (2pt); \fill (b) circle (2pt);\fill (c) circle (2pt);
 \node[below] at (a) {$1$};\node[above] at (b) {$2$};\node[above] at (c) {$3$};
\end{scope}
 \begin{scope}[xshift=3cm]
 \coordinate (a) at (0,0);\coordinate (b) at (-.3,1);\coordinate (c) at (.3,1);
 \draw (a) -- (b); \draw (a) -- (c);
 \draw (a) .. controls ++(45:1.5) and
++(0:.5) ..  (0,1.5) .. controls ++(180:.5) and
++(135:1.5) ..(a);
 \fill (a) circle (2pt); \fill (b) circle (2pt);\fill (c) circle (2pt);
 \node[below] at (a) {$1$};\node[above] at (b) {$3$};\node[above] at (c) {$2$};
\end{scope}
 \begin{scope}[xshift=6cm]
 \coordinate (a) at (0,0);\coordinate (b) at (0,.6);\coordinate (c) at (0,1.2);
 \draw (a) -- (b); \draw (b) -- (c);
 \draw (a) .. controls ++(45:1.5) and
++(0:.5) ..  (0,1.5) .. controls ++(180:.5) and
++(135:1.5) ..(a);
 \fill (a) circle (2pt); \fill (b) circle (2pt);\fill (c) circle (2pt);
 \node[below] at (a) {$1$};\node[right] at (b) {$2$};\node[right] at (c) {$3$};
\end{scope}
\end{tikzpicture}
  \caption{Quadratic differentials in the stratum $\quadomoduli[0](3,-1;-2,-2,-2)$ with generic residues.}
 \label{fig:-1,3}
\end{figure}
 \begin{figure}[htb]
	\centering
\begin{tikzpicture} [scale=1.25]
 \begin{scope}
 \coordinate (a) at (0,0);\coordinate (b) at (0,1);\coordinate (c) at (0,-1);
 \draw (a) -- (b); \draw (a) -- (c);
 \draw (a) .. controls ++(45:1.5) and
++(0:.5) ..  (0,1.5) .. controls ++(180:.5) and
++(135:1.5) ..(a);
 \fill (a) circle (2pt); \fill (b) circle (2pt);\fill (c) circle (2pt);
 \node[below right] at (a) {$1$};\node[above] at (b) {$2$};\node[right] at (c) {$3$};
\end{scope}
 \begin{scope}[xshift=3cm]
 \coordinate (a) at (0,0);\coordinate (b) at (0,1);\coordinate (c) at (0,-1);
 \draw (a) -- (b); \draw (a) -- (c);
 \draw (a) .. controls ++(45:1.5) and
++(0:.5) ..  (0,1.5) .. controls ++(180:.5) and
++(135:1.5) ..(a);
 \fill (a) circle (2pt); \fill (b) circle (2pt);\fill (c) circle (2pt);
 \node[below right] at (a) {$1$};\node[above] at (b) {$3$};\node[right] at (c) {$2$};
\end{scope}

\end{tikzpicture}
  \caption{Quadratic differentials in the stratum $\quadomoduli[0](1,1;-2,-2,-2)$ with generic residues.}
 \label{fig:1,1}
\end{figure}
\end{ex}

\begin{ex}
 We consider the stratum $\Omega^{2}\mathcal{M}_{0}(1,3,[-2]^{4})$. In this case, we have $a_{1}!! \cdot a_{2}!!=3$ and $\binom{p-1}{\lceil a_{1}/k \rceil}= \binom{3}{1}=3$. Therefore, by Corollary~\eqref{cor:polesk}, the degree of the isoresidual cover is~$9$.

 Now, using Equation~\eqref{eq:degre1} with $a_{1}=1$, we consider the subsets $I=\emptyset$, for which $c_{1,I}=3$, and $I=\lbrace1\rbrace,\lbrace2\rbrace,\lbrace3\rbrace,\lbrace4\rbrace$, for which $c_{1,I}=1$.
Hence, the degree is
  $$3 \cdot f_2(1,1) \cdot f_2(3,5) + 4 \cdot 1 \cdot f_2(1,2) \cdot f_2(3,4) = 3  \cdot\frac{1}{3}\cdot (-3) + 4\cdot 1 \cdot 1 \cdot 3 = 9\,.$$
  Following the construction, the corresponding differentials are obtained by gluing one vertex to the graphs from Figures~\ref{fig:-1,3} and~\ref{fig:1,1}. These are illustrated in Figure~\ref{fig:1,3} in the case where $R_{1}>R_{2}>R_{3}>R_{4}$ are real, and each residue is much larger than the subsequent one.
   \begin{figure}[htb]
	\centering
\begin{tikzpicture} [scale=1.25]
 \begin{scope}
 \coordinate (a) at (0,0);\coordinate (b) at (-.3,1);\coordinate (c) at (.3,1); \coordinate (d) at (0,-.7);
 \draw (a) -- (b); \draw (a) -- (c);\draw (a) -- (d);
 \draw (a) .. controls ++(45:1.5) and
++(0:.5) ..  (0,1.5) .. controls ++(180:.5) and
++(135:1.5) ..(a);
 \fill (a) circle (2pt); \fill (b) circle (2pt);\fill (c) circle (2pt);\fill (d) circle (2pt);
 \node[below right] at (a) {$1$};\node[above] at (b) {$2$};\node[above] at (c) {$3$};\node[right] at (d) {$4$};
\end{scope}
 \begin{scope}[xshift=3cm]
 \coordinate (a) at (0,0);\coordinate (b) at (-.3,1);\coordinate (c) at (.3,1); \coordinate (d) at (0,-.7);
 \draw (a) -- (b); \draw (a) -- (c);\draw (a) -- (d);
 \draw (a) .. controls ++(45:1.5) and
++(0:.5) ..  (0,1.5) .. controls ++(180:.5) and
++(135:1.5) ..(a);
 \fill (a) circle (2pt); \fill (b) circle (2pt);\fill (c) circle (2pt);\fill (d) circle (2pt);
 \node[below right] at (a) {$1$};\node[above] at (b) {$3$};\node[above] at (c) {$2$};\node[right] at (d) {$4$};\end{scope}
 \begin{scope}[xshift=6cm]
 \coordinate (a) at (0,0);\coordinate (b) at (0,.6);\coordinate (c) at (0,1.2);\coordinate (d) at (0,-.7);
 \draw (a) -- (b); \draw (b) -- (c);\draw (a) -- (d);
 \draw (a) .. controls ++(45:1.5) and
++(0:.5) ..  (0,1.5) .. controls ++(180:.5) and
++(135:1.5) ..(a);
 \fill (a) circle (2pt); \fill (b) circle (2pt);\fill (c) circle (2pt);\fill (d) circle (2pt);
 \node[below right] at (a) {$1$};\node[right] at (b) {$2$};\node[right] at (c) {$3$};\node[right] at (d) {$4$};
\end{scope}

 \begin{scope}[yshift=-3cm]
 \coordinate (a) at (0,0);\coordinate (b) at (0,.5);\coordinate (c) at (0,-1);\coordinate (d) at (0,1);
 \draw (a) -- (b); \draw (a) -- (c);\draw (b) -- (d);
 \draw (a) .. controls ++(45:1.5) and
++(0:.5) ..  (0,1.5) .. controls ++(180:.5) and
++(135:1.5) ..(a);
 \fill (a) circle (2pt); \fill (b) circle (2pt);\fill (c) circle (2pt);\fill (d) circle (2pt);
 \node[below right] at (a) {$1$};\node[right] at (b) {$2$};\node[right] at (c) {$3$};\node[right] at (d) {$4$};
\end{scope}

 \begin{scope}[yshift=-3cm,xshift=3cm]
 \coordinate (a) at (0,0);\coordinate (b) at (-.3,.8);\coordinate (c) at (0,-1);\coordinate (d) at (.3,.8);
 \draw (a) -- (b); \draw (a) -- (c);\draw (a) -- (d);
 \draw (a) .. controls ++(45:1.5) and
++(0:.5) ..  (0,1.5) .. controls ++(180:.5) and
++(135:1.5) ..(a);
 \fill (a) circle (2pt); \fill (b) circle (2pt);\fill (c) circle (2pt);\fill (d) circle (2pt);
 \node[below right] at (a) {$1$};\node[above] at (b) {$2$};\node[right] at (c) {$3$};\node[above] at (d) {$4$};
\end{scope}

\begin{scope}[yshift=-3cm,xshift=6cm]
\coordinate (a) at (0,0);\coordinate (b) at (-.3,.8);\coordinate (c) at (0,-1);\coordinate (d) at (.3,.8);
 \draw (a) -- (b); \draw (a) -- (c);\draw (a) -- (d);
 \draw (a) .. controls ++(45:1.5) and
++(0:.5) ..  (0,1.5) .. controls ++(180:.5) and
++(135:1.5) ..(a);
 \fill (a) circle (2pt); \fill (b) circle (2pt);\fill (c) circle (2pt);\fill (d) circle (2pt);
 \node[below right] at (a) {$1$};\node[above] at (b) {$4$};\node[right] at (c) {$3$};\node[above] at (d) {$2$};
\end{scope}

 \begin{scope}[yshift=-6cm]
 \coordinate (a) at (0,0);\coordinate (b) at (0,.5);\coordinate (c) at (0,-1);\coordinate (d) at (0,1);
 \draw (a) -- (b); \draw (a) -- (c);\draw (b) -- (d);
 \draw (a) .. controls ++(45:1.5) and
++(0:.5) ..  (0,1.5) .. controls ++(180:.5) and
++(135:1.5) ..(a);
 \fill (a) circle (2pt); \fill (b) circle (2pt);\fill (c) circle (2pt);\fill (d) circle (2pt);
 \node[below right] at (a) {$1$};\node[right] at (b) {$3$};\node[right] at (c) {$2$};\node[right] at (d) {$4$};
\end{scope}

 \begin{scope}[yshift=-6cm,xshift=3cm]
 \coordinate (a) at (0,0);\coordinate (b) at (-.3,.8);\coordinate (c) at (0,-1);\coordinate (d) at (.3,.8);
 \draw (a) -- (b); \draw (a) -- (c);\draw (a) -- (d);
 \draw (a) .. controls ++(45:1.5) and
++(0:.5) ..  (0,1.5) .. controls ++(180:.5) and
++(135:1.5) ..(a);
 \fill (a) circle (2pt); \fill (b) circle (2pt);\fill (c) circle (2pt);\fill (d) circle (2pt);
 \node[below right] at (a) {$1$};\node[above] at (b) {$3$};\node[right] at (c) {$2$};\node[above] at (d) {$4$};
\end{scope}

\begin{scope}[yshift=-6cm,xshift=6cm]
\coordinate (a) at (0,0);\coordinate (b) at (-.3,.8);\coordinate (c) at (0,-1);\coordinate (d) at (.3,.8);
 \draw (a) -- (b); \draw (a) -- (c);\draw (a) -- (d);
 \draw (a) .. controls ++(45:1.5) and
++(0:.5) ..  (0,1.5) .. controls ++(180:.5) and
++(135:1.5) ..(a);
 \fill (a) circle (2pt); \fill (b) circle (2pt);\fill (c) circle (2pt);\fill (d) circle (2pt);
 \node[below right] at (a) {$1$};\node[above] at (b) {$4$};\node[right] at (c) {$2$};\node[above] at (d) {$3$};
\end{scope}

\end{tikzpicture}
  \caption{Quadratic differentials in the stratum $\quadomoduli[0](1,3;-2,-2,-2,-2)$ with generic residues.}
 \label{fig:1,3}
\end{figure}
\par
Now we can deform these differentials until they reach a  resonance hyperplane. For example, let us increase $R_{4}$ until we obtain $P(R_{3},R_{4})=0$, meaning that $r_{4}+r_{3}=0$. In this case, we see that the only graph becoming singular is the one in the lower-left corner. Therefore, the corresponding isoresidual fiber has $8$ elements in this situation. 
\end{ex}

In the following examples, we illustrate the formulas from Theorem~\ref{thm:one-resonant} and~\ref{thm:iso-arbitrary} on specific configurations of residues for cubic and quartic differentials. For each of these configurations, the isoresidual fiber is known to be empty according to the classification of obstructions \cite[Theorem 1.6]{GT-k-residue}. These examples thus provide a consistency check for our formula in the presence of known obstructions to the residue realization problem (and vice versa).
\begin{ex} Consider the case
$k=3$, $\mu=(4,-1;-3,-3,-3)$, and $\mathcal{R}=[1:1:1]$.
Then $c_{1,I}>0$ holds for the following subsets:
\begin{itemize}
\item $I=\emptyset$ with $c_{1,I}=7$,
\item $I=\{i\}$ for $i=1,2,3$ with $c_{1,I}=4$,
\item $I=\{i_1,i_2\}$ with $1\leq i_1<i_2\leq 3$ with $c_{1,I}=1$.
\end{itemize}
We have
\begin{eqnarray*}
    d_3(\mu) & = &[7f_3(4,1)f_3(-1,4)]+ 3[4f_3(4,2)f_3(-1,3)]+3[1f_3(4,3)f_3(-1,2)]\\
    & = & [7\cdot (1/7) \cdot (-1)(-4)]+3[4\cdot1\cdot(-1)]+3[1\cdot 4 \cdot 1]\\
    & = & 4\, .
\end{eqnarray*}
There is a single resonance subset $I=\{1,2,3\}$, as mentioned in the proof above, for which $f_I=f(9/3-1,4)=2$ and $\Ab_{\mathcal{R}}(I)=2$. Hence, the cardinality of the isoresidual fiber is
\begin{equation*}
    d_3(\mu)-f_I\cdot \Ab_{\mathcal{R}}(I) = 0\,.
\end{equation*}
The fact that this isoresidual fiber is empty is consistent with \cite[Theorem 1.6 (2)]{GT-k-residue}.
\end{ex}

\begin{ex} \label{ex:451444}
Let $k=4$, consider the partition $\mu=(5,-1,-4,-4,-4)$ and $\mathcal{R}=[1:1:-4]$. The generic isoresidual fiber has $d_4(\mu)=5$. There are two resonant subsets: $J_1=\{1,2\}$ and $J'_1=\{1,2,3\}$.
    
For $J_1=\{1,2\}$, we have $J_0=[3]$. The only nonzero term for $G_4(J_0;J_1)$ occurs when $I={1}$ and $I^c=\emptyset$, with $d_{1,I}=1>0$ and $d_{2,I^c}=3>0$. Then $\mu_{I,I^c}=[-3,0,-4]$ and $d_4(\mu_{I,I^c})=1$. We compute: 
\begin{eqnarray*}
    G_4(J_0;J_1) & = & 1\cdot 3 \cdot 1 \cdot (5+4)^0(-1+4)^{-1} = 1\,,\\
    f_{J_1} & = & f(8/4-1,3) = 1\,,\\
    \Ab_{\mathcal{R}}(J_1) & = & \#\{[r_1:r_2]=[1:-1]\} = 1,
\end{eqnarray*}
so the contribution is
\begin{equation*}
    (-1)G_4(J_0;J_1)f_{J_1}Ab_{\mathcal{R}}(J_1) = -1\,.
\end{equation*}

For $J'_1=\{1,2,3\}$, we have $J'_0=\emptyset$. The only nonzero term for $G_4(J'_0;J'_1)$ occurs when $I=\emptyset$ with $d_{1,I}=9>0$. We compute: 
\begin{eqnarray*}
    G_4(J'_0;J'_1) & = & 9 \cdot (5+4)^{-1}(-1+4)^{0} = 1\,,\\
    f_{J'_1} & = & f(12/4-1,4) = 2\,,\\
    \Ab_{\mathcal{R}}(J'_1) & = & \#\{[r_1:r_2:r_3]=[1:i:-1-i],[1,-i,-1+i]\} = 2
\end{eqnarray*}
so the contribution is
\begin{equation*}
    (-1)G_4(J'_0;J'_1)f_{J'_1}Ab_{\mathcal{R}}(J'_1)= -4\,.
\end{equation*}
We conclude that the special isoresidual fiber has cardinality $5-1-4=0$, so it is empty, consistent with the known obstruction.  
\end{ex}

\begin{ex} \label{ex:133444444}
Let $k=4$ and consider the partition $\mu=(13,3,\rec[-4][6])$, and $\mathcal{R}=[\rec[1][6]]$. The generic isoresidual fiber has  $d_4(\mu)=8775$. The resonant subsets are generated by all 2-element subsets $\{[i_1,i_2]\}_{1\leq i_1<i_2\leq 6}$ of $\{1,\ldots, 6\}$. The contributions are computed as follows:
\begin{itemize}
    \item Single $2$-element subsets $J_1=[i_1,i_2]$ (15 subsets): 
    \begin{eqnarray*}
    (-1)G_4(J_0;J_1)f_{J_1} & = & -405\,, \\ 
    \Ab_{\mathcal{R}}(J_1) & = & \#\{[r_{i_1}:r_{i_2}]=[1:-1]\}=1\,.
    \end{eqnarray*}
    The total contribution of this type is $-405\cdot 15=-6075$.
    \item Single $4$-element subsets $J_1=[i_1,i_2,i_3,i_4]$ (15 subsets): 
    \begin{eqnarray*}
    (-1)G_4(J_0;J_1)f_{J_1} & = & -18\,, \\
    \Ab_{\mathcal{R}}(J_1)& = &\#\left\{[r_{i_1}:r_{i_2}:r_{i_3}:r_{i_4}]=[1:1:-1:-1]^{\times 3},[1:-1:i:-i]^{\times 6}\right\} \\
    &= &9\,.
    \end{eqnarray*}
    The notation $[]^{\times n}$ indicates that there are $n$ tuples of abelian residues of this type, obtained by permuting all entries except the first, after quotienting by $\mathbb{C}^*$. The total contribution of this type is $-2430$.
    \item Full set $J_1=[1,2,3,4,5,6]$: 
    \begin{eqnarray*}
    (-1)G_4(J_0;J_1)f_{J_1} & = & -120\,, \\
    \Ab_{\mathcal{R}}(J_1) & = & \#\left\{[r_1:\cdots : r_6]=[1^3:-1^3]^{\times 10},[1^2:-1^2:i:-i]^{\times 60},[1:-1:i^2:-i^2]^{\times 30}\right\} \\
    &= & 100\,.
    \end{eqnarray*}
    The total contribution of this type is $-12000$.
    \item $J_1,J_2=[i_1,i_2],[i_3,i_4]$ ($45$ pairs):  
   \begin{eqnarray*}
    (-1)^2G_4(J_0;J_1,J_2)f_{J_1}f_{J_2} & = & 51\,,\\
    \Ab_{\mathcal{R}}(J_j)& = & 1
    \end{eqnarray*}
   for $j=1,2$, as we previously computed. The total contribution of this type is $2295$.
    \item $J_1,J_2=[i_1,i_2,i_3,i_4],[i_5,i_6]$ ($15$ pairs): 
    \begin{eqnarray*}
     (-1)^2G_4(J_0;J_1,J_2)f_{J_1}f_{J_2} & = & 102\,, \\
     \Ab_{\mathcal{R}}(J_1) & = & 9\,,\\
     \Ab_{\mathcal{R}}(J_2) & = & 1\,.
   \end{eqnarray*}
     The total contribution of this type is $13770$. 
    \item $J_1,J_2,J_3=[i_1,i_2],[i_3,i_4],[i_5,i_6]$ ($15$ combinations): 
    \begin{eqnarray*}
    (-1)^3G_4(J_0;J_1,J_2,J_3)\prod_{j=1}^3f_{J_j} & = & -289\,, \\ 
    \Ab_{\mathcal{R}}(J_j) & = & 1 
    \end{eqnarray*}
    for $j=1,2,3$. The total contribution of this type is $-4335$.
\end{itemize}
Adding everything together, we conclude that the special isoresidual fiber has cardinality $0$, and is therefore empty. 
\end{ex}

\section{Applications to spherical geometry}\label{sec:Spherical}

The generalization of the uniformization theorem to  surfaces endowed with metrics having conical singularities of prescribed angles remains an open problem in the case of metrics with constant positive curvature (see \cite{Er1} for a general survey). This problem has been approached using various methods, including PDEs (see, for example, \cite{CL}), hypergeometric functions (see \cite{EGT}), and algebraic geometry (see \cite{LSX}). Recently, progress has been made by considering specific subclasses of cone spherical metrics with restricted monodromy. In particular, when the monodromy is \textit{coaxial}---meaning that the monodromy group, a subgroup of $\text{SO}(3)$, is confined to a one-parameter family of rotations around a single axis---Eremenko \cite{Erm} obtained a complete characterization of the configurations of conical angles that can be realized by such metrics.
\par
A slightly larger class of cone spherical metrics has also been considered. A metric is said to have  \textit{dihedral monodromy} if its monodromy preserves a great circle of the model sphere. It was shown in \cite[Theorem~1.2]{SongSpher} that these metrics are defined by (multi-valued) developing maps of the form $z \mapsto \exp\left(\int^{z} \sqrt{q}\right)$, where $q$ is a meromorphic quadratic differential with at most double poles and such that every period of $\sqrt{q}$ is real. In particular, the quadratic residue at each double pole must be a positive real number. The corresponding spherical metric is obtained as the pullback of $\frac{4|dw|^{2}}{(1+|w|^{2})^{2}}$. The interpretation of the singularities of $q$ is as follows:
\begin{itemize}
    \item a zero of order $a$ corresponds to a conical singularity of angle $(2+a)\pi$;
    \item a simple pole corresponds to a conical singularity of angle $\pi$;
    \item a double pole with quadratic residue $\frac{\theta^{2}}{4\pi^{2}}$ corresponds to a conical singularity of angle $\theta$.
\end{itemize}
A complete characterization of the configurations of conical angles realized by metrics with dihedral monodromy was obtained in \cite{getasphere}. More recently, the topology of the moduli space of such metrics with constrained monodromy has been studied in \cite{LX}.
\par
For metrics on the sphere with dihedral monodromy, certain configurations of conical angles can be realized by only finitely many distinct cone spherical metrics. In this case, the counting problem reduces to enumerating  the number of elements in an isoresidual fiber. Specifically, for quadratic differentials with at most double poles, Theorem~\ref{thm:main} provides such a count.

\begin{thm}\label{thm:Spherical}
Consider $n \geq 1$,  two odd integers $a,b$ satisfying $a+b=2n$, and non-integer positive real numbers $c_{1},\dots,c_{n}$ such that any sum of the form $\sum\limits_{i=1}^{n} \epsilon_{i} c_{i}$ with $\epsilon_{1},\dots,\epsilon_{n} \in \lbrace{ -1,0,1 \rbrace}$ vanishes only if $\epsilon_{1}=\dots=\epsilon_{n}=0$.

Then the number of distinct cone spherical metrics with dihedral monodromy on the sphere, with $n+2$ marked conical singularities of angles $a\pi,b\pi,c_{1}\pi,\dots,c_{n}\pi$, is 
$$
\binom{n-1}{\frac{a -1}{2}}(a-2)!!\cdot (b-2)!!\,.$$
\end{thm}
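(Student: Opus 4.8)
The plan is to reduce the enumeration of these spherical metrics to the isoresidual count of Corollary~\ref{cor:polesk} via the quadratic‑differential description of dihedral‑monodromy metrics.

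First I would fix the dictionary. By \cite[Theorem~1.2]{SongSpher}, recalled above, a cone spherical metric on $\mathbb{CP}^{1}$ with dihedral monodromy and marked conical singularities of angles $a\pi,b\pi,c_{1}\pi,\dots,c_{n}\pi$ is the pull‑back of $\tfrac{4|dw|^{2}}{(1+|w|^{2})^{2}}$ under a developing map $z\mapsto\exp\left(\int^{z}\sqrt q\right)$, where $q$ is a meromorphic quadratic differential on $\mathbb{CP}^{1}$ having a zero of order $a-2$ at the first marked point, a zero of order $b-2$ at the second, a double pole of quadratic residue $c_{i}^{2}/4\in\mathbb{R}_{>0}$ at the $i$‑th of the remaining $n$ marked points, and all periods of $\sqrt q$ real. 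The orders sum to $a+b-4-2n=-4=2(2\cdot0-2)$, and since $a$ and $b$ are odd the $\gcd$ of the parts of $\mu\coloneqq(a-2,b-2,-2,\dots,-2)$ (with $n$ entries $-2$) is odd, hence coprime to $2$, so $\Omega^{2}\mathcal M_{0}(\mu)$ is a nonempty stratum of primitive quadratic differentials and the machinery of Section~\ref{sec:Intersection} applies. Conversely, a metric in this class recovers its developing map $f$ up to post‑composition by a dihedral isometry, and $q=(df/f)^{2}$ is invariant under such post‑composition and under the choice of branch of $\sqrt q$; hence distinct $q$ give distinct metrics, and the number we want equals the number of $q\in\Omega^{2}\mathcal M_{0}(\mu)$ whose quadratic residue tuple is $\calR\coloneqq(c_{1}^{2}/4,\dots,c_{n}^{2}/4)$ and all of whose $\sqrt q$‑periods are real.

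Next I would argue that under the genericity hypothesis on the $c_{i}$ the reality of the periods is automatic, so that the count is exactly the cardinality of the isoresidual fiber of $\Res$ over $\calR$. By Section~\ref{sub:Periods}, every $\sqrt q$‑period along a closed saddle connection is a partial sum of the $1$‑residues $\pm c_{i}/2$, hence real; and along a saddle connection joining the two zeros the period $z$ satisfies $(1-\epsilon)z+\sum_{j=1}^{n}r_{j}=0$ with $\epsilon=\pm1$ and $r_{j}=\pm c_{j}/2$. If $\epsilon=1$ this forces $\sum_{j}r_{j}=0$, i.e.\ a vanishing signed sum $\sum_{j}\epsilon_{j}c_{j}=0$ with $\epsilon_{j}\in\{-1,0,1\}$; since $W_{2}=\{0,1,-1\}$, the hypothesis on the $c_{i}$ says precisely that $\calR$ avoids the resonance locus, so in fact $\epsilon=-1$ and $z=-\tfrac12\sum_{j}r_{j}\in\mathbb{R}$, whence all periods are real. (Alternatively: the canonical double cover branched at the two odd‑order zeros has genus $0$ by Riemann--Hurwitz, so $\sqrt q$ lifts to a meromorphic $1$‑form whose period module is spanned by the residues $\pm c_{i}/2$.) As $\calR$ lies off the resonance locus, its fiber is a generic fiber, and by Corollary~\ref{cor:degreeGen} its cardinality is the degree of the isoresidual cover of $\Omega^{2}\mathcal M_{0}(\mu)$.

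Finally I would evaluate this degree with Corollary~\ref{cor:polesk} applied to $k=2$, $a_{1}=a-2$, $a_{2}=b-2$, $p=n$: it equals $\binom{n-1}{\lceil(a-2)/2\rceil}\,(a-2)!_{(2)}\,(b-2)!_{(2)}$. Since $a-2$ is odd, $\lceil(a-2)/2\rceil=\tfrac{a-1}{2}$; and for an odd integer $m$ one has $m!_{(2)}=m(m-2)\cdots1\cdot(-1)=-m!!$, so the two sign factors cancel because $a$ and $b$ are both odd, giving $(a-2)!_{(2)}(b-2)!_{(2)}=(a-2)!!\,(b-2)!!$. This produces the asserted count $\binom{n-1}{\frac{a-1}{2}}(a-2)!!\,(b-2)!!$. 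The step I expect to be the real obstacle is the one in the previous paragraph: making the phrase ``all periods of $\sqrt q$ are real'' precise in this genus‑zero setting and checking that it is equivalent to non‑resonance of $\calR$, together with verifying that the correspondence between marked dihedral spherical metrics and marked quadratic differentials is genuinely bijective; the intersection‑theoretic input itself is supplied in its entirety by Corollary~\ref{cor:polesk}.
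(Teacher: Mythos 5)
Your proposal is correct and follows essentially the same route as the paper's proof: both reduce the count to the cardinality of the generic isoresidual fiber of $\Omega^{2}\mathcal{M}_{0}(a-2,b-2,[-2]^{n})$ over the non-resonant tuple $((c_{1}/2)^{2},\dots,(c_{n}/2)^{2})$ and evaluate it via Corollary~\ref{cor:polesk}. The only substantive differences are that you additionally verify the converse inclusion (that every differential in the fiber has $\sqrt{q}$-periods lying on a real line, hence genuinely yields a dihedral metric), a point the paper leaves implicit, while you skip the paper's short elimination argument showing that the angles $a\pi,b\pi$ cannot be realized by double poles and that no unmarked singularities can occur.
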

Note that, since $a+b=2n$, the formula in the theorem remains unchanged if $a$ is replaced by~$b$. Moreover, the hypothesis that $a$ and $b$ are odd implies that all these metrics are strictly dihedral; that is, they are not coaxial.

\begin{proof}
Given such a cone spherical metric on the sphere, there exists a quadratic differential~$q$ whose developing map is of the form $z \mapsto \exp\left(\int^{z} \sqrt{q}\right)$. The conical singularities with non-integer angles $c_{1}\pi,\dots,c_{n}\pi$ cannot correspond to simple poles or zeros of $q$; therefore, they must be double poles of~$q$. 

A conical singularity of angle $a\pi$ or $b\pi$ cannot correspond to a double pole of $q$, because the other would then have to be a zero of even order (the total order of the singularities of $q$ is $-4$). It follows that these two conical singularities of angles $a\pi$ and $b\pi$ correspond to zeros of $q$ of orders $a-2$ and $b-2$, respectively. 

Any other zero or simple pole of $q$ would correspond to  another conical singularity with angle in $\pi\mathbb{N}$. Since $a+b=2n$, the total order of the singularities corresponding to the conical singularities of the spherical metric is $-4$, so there are no additional singularities. 

Thus, the quadratic differential $q$ belongs to the stratum $\Omega^{2}\mathcal{M}_{0}(a-2,b-2,\rec[-2][n])$. The remaining task is to count the number of quadratic differentials in this stratum with $n$ double poles whose  quadratic residues are $\left(\frac{c_{1}}{2}\right)^{2}, \dots, \left(\frac{c_{n}}{2}\right)^{2}$.
\par
The genericity hypothesis on $c_{1},\dots,c_{n}$ ensures  that the configuration $\left(\frac{c_{1}}{2}\right)^{2}, \dots, \left(\frac{c_{n}}{2}\right)^{2}$ of quadratic residues lies outside the resonance locus. By Theorem~\ref{thm:main}, this allows us to count the number of elements in the corresponding isoresidual fiber. By hypothesis, all residues are pairwise distinct. However, it is possible that $a=b$; in this case, the corresponding singularities of the quadratic differentials are marked, which is consistent with our choice to work with marked strata throughout this paper.
\end{proof}

\printbibliography
\end{document}